\newtheorem{lemma}{Lemma} 
\newtheorem{Prop}{Proposition}
\newcommand{\EF}[1]{}
\newcommand{\deleted}[1]{}
\newcommand{\replaced}[2]{#1}
\newfont{\NUMBERS}{msbm8 scaled\magstep1}
\newcommand{\REAL}{\mbox{\NUMBERS R}}
\newcommand{\Argmin}{\operatornamewithlimits{argmin\vphantom{q}}}
\newcommand{\Vect}[2][]
{
  \ifthenelse{\equal{#1}{}}
  {\underline{#2}}
  {{#2}_{#1}}
}
\newcommand{\Matr}[2][]
{
  \ifthenelse{\equal{#1}{}}
  {\boldsymbol{#2}}
  {{#2}_{#1}}
}
\newcommand{\Span}{\ensuremath{\operatorname{span}}}
\newcommand{\Var}{\ensuremath{\operatorname{var}}}
\newcommand{\Err}{E}
\newcommand{\Wass}[1]{W_{#1}}
\newcommand{\ErrWass}{\Err_{\Wass{1}}}
\newcommand{\ErrTdens}{\Err_{\OptTdens}}
\newcommand{\RelDiff}{\rho}
\newcommand{\Tendsto}{\rightarrow}
\newcommand{\dx}{\, dx}
\DeclareMathOperator{\Div}{\nabla\cdot}
\DeclareMathOperator{\Grad}{\nabla}
\newcommand{\Dt}[1]{\partial_{t} #1}
\newcommand{\Lyap}{\mathcal{L}}
\newcommand{\SLyap}{\mathcal{S}}
\newcommand{\Ene}{\mathcal{E}_{\Forcing}}
\newcommand{\Wmass}{\mathcal{M}}
\newcommand{\InfSupLyap}{\Upsilon}
\newcommand{\Dim}{d}
\newcommand{\tstep}{k}
\newcommand{\tstepp}{k+1}
\newcommand{\Deltat}[1][]{\ifthenelse{\equal{#1}{}}{\Delta t_{\tstep}}{\Delta t_{#1}}}
\newcommand{\nlit}{m}
\newcommand{\nlitp}{m+1}
\newcommand{\PolySymb}{\mathcal{P}}
\newcommand{\PC}[1]{\ensuremath{\PolySymb_{#1}}}
\newcommand{\PONE}{\ensuremath{\PolySymb_{1}}}
\newcommand{\PZERO}{\ensuremath{\PolySymb_{0}}}
\newcommand{\MeshPar}{h}
\newcommand{\Cell}[1][]{\ifthenelse{\equal{#1}{}}{T}{T_{#1}}}
\newcommand{\Tsymb}{\mathcal{T}}
\newcommand{\Triang}[1][]
{
  \ifthenelse{\equal{#1}{}}
  {\Tsymb}
  {\Tsymb_{#1}}
}
\newcommand{\Domain}{\Omega}
\newcommand{\Ftest}{\varphi}
\newcommand{\PotOp}{\mathcal{U}_{\Forcing}}
\newcommand{\Lspacechar}{L}
\newcommand{\Lspace}[1]{\Lspacechar^{#1}}
\newcommand{\Lplus}[1]{\Lspace{#1}_{+}}
\newcommand{\Vof}[2]{[#1]^{#2}}
\newcommand{\Hspacechar}{H}
\newcommand{\Wspacechar}{W}
\newcommand{\Sob}[2][]{
  \ifthenelse{\equal{#2}{}}
  {\Hspacechar^{#1}}
  {\Wspacechar^{#1,#2}}  
}
\newcommand{\Hspace}[2][]{
  \ifthenelse{\equal{#2}{}}
  {\Hspacechar^{#1}}
  {\Hspacechar^{#1}_{#2}}  
}
\newcommand{\HolderExp}{\delta}
\newcommand{\Holder}[1][]{
  \ifthenelse{\equal{#1}{}}
  {\Cachar^{\HolderExp}}
  {\Cachar^{#1}}  
}
\newcommand{\Lip}[1]{\text{Lip}_{#1}}
\newcommand{\Cachar}{\mathcal{C}}
\newcommand{\Ca}[1][]{
  \ifthenelse{\equal{#1}{}}
  {\Cachar^{\delta}}
  {\Cachar^{\delta}(#1)}
}
\newcommand{\VspaceSymb}{\mathcal{V}}
\newcommand{\Vspace}[1][]{
  \ifthenelse{\equal{#1}{}}
  {\VspaceSymb_{\MeshPar}}
  {\VspaceSymb_{\MeshPar}(#1)}
}
\newcommand{\Vdim}{N}
\newcommand{\VbaseSymb}{\varphi}
\newcommand{\Vbase}[1][]{
  \ifthenelse{\equal{#1}{}}
  {\VbaseSymb}
  {\VbaseSymb_{#1}}
}
\newcommand{\WspaceSymb}{\mathcal{W}}
\newcommand{\Wspace}[1][]{
  \ifthenelse{\equal{#1}{}}
  {\WspaceSymb_{\MeshPar}}
  {\WspaceSymb_{\MeshPar}(#1)}
}
\newcommand{\Wdim}{M}
\newcommand{\WbaseSymb}{\psi}
\newcommand{\Wbase}[1][]{
  \ifthenelse{\equal{#1}{}}
  {\WbaseSymb}
  {\WbaseSymb_{#1}}
}
\newcommand{\Cont}[1][]{
  \ifthenelse{\equal{#1}{}}
  {\Cachar} 
  {\Cachar^{#1}}
}
\newcommand{\Opt}[1]{#1^*}
\newcommand{\Tdens}{\mu}
\newcommand{\TdensH}{\Tdens_{\MeshPar}}
\newcommand{\OptTdensH}{\Opt{\TdensH}}
\newcommand{\OptTdens}{\Opt{\Tdens}}
\newcommand{\Pot}{u}
\newcommand{\PotH}{\Pot_{\MeshPar}}
\newcommand{\OptPot}{\Opt{\Pot}}
\newcommand{\OptPotH}{\Opt{\PotH}}
\newcommand{\OptTime}{\Opt{t}}
\newcommand{\Vel}{v}
\newcommand{\OptVel}{\Opt{\Vel}}
\newcommand{\Field}{w}
\newcommand{\Vectorfield}{Y}
\newcommand{\Map}{T}
\newcommand{\OptMap}{\Opt{\Map}}
\newcommand{\OptMapH}{\OptMap_{\MeshPar}}
\newcommand{\Plan}{\gamma}
\newcommand{\OptPlan}{\Opt{\Plan}}
\newcommand{\OptPlanH}{\OptPlan_{\MeshPar}}
\newcommand{\SnkhPar}{\varepsilon}
\newcommand{\SnkhOptPlanH}{\OptPlan_{\MeshPar,\SnkhPar}}
\newcommand{\Fsource}{f}
\newcommand{\Forcing}{\Fsource}        
\newcommand{\Source}{\Forcing^+}            
\newcommand{\Sink}{\Forcing^-}
\newcommand{\Fcont}{\Fsource_{1}}
\newcommand{\Fcost}{\Fsource_{2}}
\newcommand{\SourceH}{\underline{\Forcing}^+}
\newcommand{\SinkH}{\underline{\Forcing}^-}            
\newcommand{\DiscreteSink}{t}
\newcommand{\DiscreteSource}{s}
\newcommand{\DimSource}{N_{+}}            
\newcommand{\DimSink}{N_{-}}
\newcommand{\Dirac}{\delta}
\newcommand{\SuppMinus}{Q^-}
\newcommand{\SuppPlus}{Q^+}
\newcommand{\SuppCenter}{Q^c}
\newcommand{\SuppOT}{Q^{\Tdens}}
\newcommand{\SuppF}{Q^{\Fsource}}
\newcommand{\Supp}{\mbox{supp}}
\newcommand{\TolPCG}{\tau_{\mbox{{\scriptsize CG}}}}
\newcommand{\TolPic}{\tau_{\mbox{{\scriptsize NL}}}}
\newcommand{\ItPic}{m_{\mbox{{\scriptsize MAX}}}}
\newcommand{\TolTime}{\tau_{\mbox{{\scriptsize T}}}}
\newcommand{\LCF}{Lyapunov-candidate functional}
\newcommand{\MKEQS}{MK equations}
\newcommand{\OTP}{OTP}
\newcommand{\DMK}{DMK}
\newcommand{\OTD}{OT\  density}
\crefname{Prop}{Proposition}{Propositions}
\title[Numerical solution of dynamic MK equations]{
  Numerical Solution of Monge-Kantorovich
  Equations via a dynamic formulation}
\author{ENRICO FACCA}
\address{Department of Mathematics, University of Padua, 
 Padova, Italy
}
\email{facca@math.unipd.it}
\author{SARA DANERI}
\address{
  Department of Mathematics, University of Erlangen-N\"urnberg,
  Erlangen, Germany
}
\email{daneri@math.fau.de}
\author{FRANCO CARDIN}
\author{MARIO PUTTI}
\address{Department of Mathematics, University of Padua,
 Padova, Italy
}
\email{\{cardin,putti\}@math.unipd.it}
\begin{document}

\maketitle

\begin{abstract}
  We extend our previous work on a biologically inspired dynamic
  Monge-Kantorovich model~\citep{Facca-et-al:2018} and propose it as an
  effective tool for the numerical solution of the $\Lspace{1}$-PDE
  based optimal transportation model.  Starting from the conjecture
  that the dynamic model is time-asymptotically equivalent to the
  Monge-Kantorovich equations governing $\Lspace{1}$ optimal
  transport, we experimentally analyze a simple yet effective
  numerical approach for the quantitative solution of these equations.

  The novel contributions in this paper are twofold. First, we
  introduce a new \LCF\ that better adheres to the dynamics of our
  proposed model.  It is shown that the Lie derivative of the new
  \LCF\ is strictly negative and, more remarkably, the \OTD\ is the
  unique minimizer for this new \LCF, providing further support to the
  conjecture of asymptotic equivalence of our dynamic model with the
  Monge-Kantorovich equations.  Second, we describe and test different
  numerical approaches for the solution of our problem. The ordinary
  differential equation for the transport density is projected into a
  piecewise constant or linear finite dimensional space defined on a
  triangulation of the domain.  The elliptic equation is discretized
  using a linear Galerkin finite element method defined on uniformly
  refined triangles.  The ensuing nonlinear differential-algebraic
  equation is discretized by means of a first order Euler method
  (forward or backward) and a simple Picard iteration is used to
  resolve the nonlinearity. The use of two discretization levels is
  dictated by the need to avoid oscillations on the potential
  gradients that prevent convergence of the scheme.

  We study the experimental convergence rate of the proposed solution
  approaches and discuss limitations and advantages of these
  formulations.  An extensive set of test cases, including problems
  that admit an explicit solution to the Monge-Kantorovich equations
  are appropriately designed to verify and test the expected numerical
  properties of the solution methods. Finally, a comparison with
  literature methods is performed and the ensuing transport maps are
  compared.  The results show that optimal convergence toward the
  asymptotic equilibrium point is achieved for sufficiently regular
  forcing function, and that the proposed method is accurate, robust,
  and computationally efficient.
\end{abstract}



\section{Introduction}

We are interested in finding the numerical solution to the following
nonlinear differential problem. Given a domain
$\Domain\subset\REAL^{\Dim}$, two positive functions $\Source$ and
$\Sink$ belonging to $\Lspace{1}(\Domain)$ and such that
$\int_{\Domain}\Source\dx=\int_{\Domain}\Sink\dx$, find the pair
$(\Tdens,\Pot):[0,+\infty[\times\Domain\mapsto\REAL^{+}\times\REAL$
that satisfies:
\begin{subequations}
  \label{eq:sys-intro}
  \begin{align}
   &
   -\Div\left(\Tdens(t,x)\Grad\Pot(t,x)\right) 
   = 
   \Source(x)-\Sink(x)
   =
   \Fsource(x) 
   \label{eq:sys-intro-div}\\ 
   & \Dt{\Tdens}(t,x) = \Tdens(t,x)\left(|\Grad\Pot(t,x)|-1\right)
   \label{eq:sys-intro-dyn}\\
   & \Tdens(0,x) =\Tdens_0(x)> 0
   \label{eq:bound_cond_d} 	
 \end{align}
\end{subequations}
complemented by homogeneous Neumann boundary conditions
for~\cref{eq:sys-intro-div}. Here, $\Dt{}$ indicates partial
differentiation with respect to time, $\Grad =\Grad_x$ indicates the
spatial gradient operator, and $\Div$ the divergence operator.  This
problem, proposed initially in~\citet{Facca-et-al:2018}, is a
generalization to the continuous setting of the discrete model
developed by~\citet{tero:model} for the simulation of the dynamics of
\emph{Physarum Polycephalum}, a slime mold with exceptional
optimization abilities~\citep{nakagaki:maze}. This latter model was
analyzed in~\citet{bonifaci:physarum}, where its equivalence to an
optimal transportation problem on a graph was shown.  In analogy to
the discrete setting,~\citet{Facca-et-al:2018} conjecture that, at
infinite times, system~\cref{eq:sys-intro} is equivalent to the
PDE-based formulation of the Monge-Kantorovich (MK) optimal
transportation problem with cost equal to the Euclidean distance as
given in~\citet{EvansGangbo99}. This latter problem reads: find a
positive function $\OptTdens$ in $\Lspace{1}(\Domain)$ and a potential
$\OptPot\in\Lip{1}(\Domain)$, with $\Lip{1}(\Domain)$ the space of
Lipschitz continuous functions with unit constant, such that:
\begin{subequations}
  \label{eq:MK-problem}
  \begin{align}
    &-\Div(\OptTdens(x)
    \Grad\OptPot(x))=\Fsource(x)  \label{eq:MK-elliptic} \\ 
    &|\Grad\OptPot(x)| \leq 1  \qquad
      \qquad \forall\; x \in \Domain  \label{eq:MK-grad-omega} \\
    &|\Grad\OptPot(x)| =1   \qquad \
      \text{ a.e. where } \OptTdens(x)>0   \label{eq:MK-grad-supp}
  \end{align}
\end{subequations}
The function $\OptTdens$, called the Optimal Transport (OT) density,
is uniquely defined by $\Forcing$~\citep{Feldman-McCann:2002}, and we
will use the notation $\OptTdens(\Forcing)$ when the space dependence
is not needed explicitly.  The function $\OptPot$ is called the
transport or Kantorovich potential~\citep{Villani:2009}.  We term this
problem the $\Lspace{1}$-\MKEQS\ (or simply \MKEQS) to distinguish it
from the $\Lspace{2}$-MK problem, characterized by a quadratic distance
cost function, and its famous fluid-dynamic formulation given
by~\citet{Benamou:2000,Benamou:2002}. Intuitively, the \OTD\ describes
``how much'' mass ``flows'' through each point of the domain in the
optimal reallocation of $\Source$ into $\Sink$. Indeed, an equivalent
formulation of the \MKEQS, called Beckmann
Problem~\citep{Santambrogio:2015}, states that the vector field
$\OptVel=-\OptTdens \Grad \OptPot$ solves the following problem:
\begin{equation} 
  \label{eq:beckmann}
  \min_{\Vel\in \Vof{\Lspace{1}(\Domain)}{\Dim}}
  \left\{
    \int_{\Domain}{
      |\Vel|
    }\dx
    \ : \ 
    \Div\Vel = \Forcing
  \right\}
\end{equation}
where the divergence is taken in the sense of distributions.

In~\citet{Facca-et-al:2018} local in time existence and uniqueness of
the solution pair $(\Tdens(t),\Pot(t))$ of~\cref{eq:sys-intro} was
proved under the assumption of $\Forcing\in\Lspace{\infty}(\Domain)$
and $\Tdens_0\in\Holder[](\Domain)$. The main difficulty in obtaining
existence and uniqueness of the solution to~\cref{eq:sys-intro} at
large times is the absence of a uniform upper bound for
$|\Grad\Pot(t)|$ or $|\Tdens(t)\Grad\Pot(t)|$.  However, several
numerical experiments in~\citet{Facca-et-al:2018} support the
conjecture of the convergence of $\Tdens$ in~\cref{eq:sys-intro}
toward the solution of the \MKEQS. Moreover, the numerical
approximation of $\Tdens(t)$ and $\Pot(t)$ is experimentally well
defined with $|\Grad\Pot(t)|$ always fulfilling the constraints of the
\MKEQS\ to be less than or equal to one when $t \Tendsto +\infty$.
This suggests that the dynamic optimal transport problem can be an
effective strategy for the numerical solution of the
$\Lspace{1}$-\MKEQS.  Unlike the $\Lspace{2}$ case, for which the
fluid-dynamic formulation of~\citet{Benamou:2000,Benamou:2002} allows
for efficient numerical solution, discretization of the $\Lspace{1}$
formulation treated in this study is much more complicated.
Algorithms based on nonlinear minimization~\citep{prigozhin} or on the
solution of the highly nonlinear Monge-Ampere equation on the product
space are often used typically coupled to some
regularization~\citep{Delzanno:2010,Delzanno:2011}.
In~\citet{Benamou-Carlier:2015,Bartels:2017kj} the
Beckmann Problem~\cref{eq:beckmann} is solved via augmented Lagrangian
methods and $\Hspace[div]{}$ discretizations of the relevant vector
fields.  Numerical methods based on entropy regularization of the
linear programming problem arising from Kantorovich-relaxation have
been introduced in~\citet{Cuturi:2013,Benamou:2015hr}.  These
techniques require the discretization of the problem in the product
space defined by the transported measures $\Source$ and $\Source$, and
thus scale quadratically with the number of unknowns, although they
may possess good parallelization properties.
Finally, we mention efficient approaches discussed
in~\citet{Li-et-al:2018,Jacobs:2018ub}  
based on the Primal-Dual Hybrid Gradient (PDHG)
algorithm~\citep{Chambolle:2010hk}. 

In this paper we propose the numerical solution of
the \MKEQS\ via the discretization of the dynamic model 
as an efficient and robust approach that does not require the
introduction of additional regularizing parameters.  Standard Galerkin
finite elements and Euler time-stepping can be combined with
efficient numerical linear algebra algorithms to produce effective
solution strategies exploiting also the dynamics of the process.  For
example, in~\citet{Bergamaschi-et-al:2018} during the time-stepping
procedure spectral information are collected and used to devise
efficient preconditioners for the conjugate gradient solver.

This paper is formed by two separate parts both supporting the
conjecture of the equivalence between the dynamic \MKEQS\ and the
$\Lspace{1}$-\MKEQS.  The first part introduces a new \LCF\ $\SLyap$
formed by the sum of an energy functional $\Ene$ and a mass functional
$\Wmass$ given by:
\begin{gather}
  \label{eq:slyap-def}
  \SLyap(\Tdens)
  := \Ene(\Tdens) + \Wmass(\Tdens)
  \\
  \label{eq:slyap-ene-wmass}
  \Ene(\Tdens)
  :=
  \sup_{\varphi \in \Lip{}(\Domain)}
  \left\{
    \int_{\Domain} \left(
      \Forcing \Ftest
      -
      \Tdens
      \frac{
        | \Grad \Ftest|^2
      }{
        2
      }
    \right)
    \dx
  \right\}
  \quad
  \Wmass(\Tdens)
  :=
  \frac{1}{2}
  \int_{\Domain}{
    \Tdens\dx
  }
\end{gather}
The energy functional $\Ene(\Tdens)$ is written above in a
variational form, which,
under the assumption $\Tdens \in \Holder[](\Domain)$ and
$\Forcing\in \Lspace{\infty}(\Domain)$, is equivalent to
\begin{equation*}
  \Ene(\Tdens)=
  \frac{1}{2}
  \int_{\Domain}{
    \Tdens |\Grad \PotOp(\Tdens)|^2 
  }\dx,
\end{equation*}
where $\PotOp(\Tdens)$ identifies the solution of
\cref{eq:sys-intro-div} given $\Tdens$.
Using the same hypothesis adopted in~\citep{Facca-et-al:2018} to show
existence and uniqueness of the solution pair $(\Tdens(t),\Pot(t))$
for small times, we prove that $\SLyap$ is strictly decreasing along
$\Tdens$-trajectories of \cref{eq:sys-intro}. More remarkably, we can
show that the \OTD\ $\OptTdens_{\Forcing}$ is the unique minimizer of
the functional $\SLyap$, and that the minimum equals the Wasserstein
distance between $\Source$ and $\Sink$ with cost equal to Euclidean
distance (denoted with $\Wass{1}$).  This result gives further
evidence in support of the conjecture that $\OptTdens_{\Forcing}$ is the
unique attractor of the dynamics on $\Tdens$ of~\cref{eq:sys-intro}.
Unfortunately, since
we are still not able to provides a uniform bound 
on $|\Grad \Pot(t)|$, global existence results seem far 
to be reached and the conjecture in~\citet{Facca-et-al:2018}
remains open.

The second part of the paper reports an extensive experimental
analysis of the numerical solution of the dynamic \MKEQS, with the
twofold ambition of i) supplying additional support of the equivalence
conjecture introduced in~\citet{Facca-et-al:2018}, and ii)
corroborating the thesis that this dynamic MK model provides an ideal
setting for the numerical solution of the $\Lspace{1}$-MK
equations. To this aim, we derive and test several numerical
approaches for the solution of~\cref{eq:sys-intro}. All the considered
methods couple together simple and cost-effective low order ($\PONE$
or triangular $\PZERO$) Galerkin finite element spaces with Euler
(forward or backward) scheme for the time-discretization of the
ensuing Differential Algebraic (DAE) system of
equations~\citep{QuarteroniValli94}.  Successive (Picard) iterations
are used when necessary to resolve the nonlinearities.  The expected
convergence of the different approaches are tested at large simulation
times against the closed form solution proposed by~\citep{buttazzo}
for sufficiently regular forcing functions.  We also verify the
convergence toward steady-state for increasingly refined grids and the
behavior of the proposed Lyapunov-candidate function.

Next, we extend the comparison already presented
in~\citet{Facca-et-al:2018} of our model results against those
reported in~\citet{prigozhin}.  These tests consider a sequence of
spatially refined grids where we look at monotonicity of the solution
and convergence of the Lyapunov-candidate functions toward a common
value.  The numerical results show that the use of one single grid may
promote the emergence of oscillations in the numerical gradient field.
These oscillations are amplified by the companion ODE solver,
eventually preventing the long-time convergence of the schemes. A
monotone solution is obtained by discretizing the transport potential
on a triangulation that is uniformly refined ($\Triang[\MeshPar/2]$)
with respect to the triangulation $\Triang[\MeshPar]$, where the
gradient field and the transport density are defined, similarly to
what happens with the inf-sup stable mixed finite elements methods for
the solution of Stokes equation~\citep{Boffi:2013}.  In the case of a
mesh aligned with the support of the transport density, optimal grid
convergence for smooth ($\Cont^1$) forcing functions is obtained using
using $\PONE(\Triang[\MeshPar/2])$ to discretize the transport
potential and either the $\PONE(\Triang[\MeshPar])$ or
$\PZERO(\Triang[\MeshPar])$ for the discretization of the transport
density and the gradient of the transport potential.  For piecewise
continuous forcing function, the loss of convergence seems to affect
more the $\PONE-\PONE$ strategy than $\PONE-\PZERO$, which seems to
be more robust.  When the grid is not aligned with the support of the
transport density, additional errors due to geometrical approximations
are introduced and optimal first order convergence convergence is
lost.  Simple grid refinement strategies can be easily employed to
solve this problem, as shown in~\citet{prigozhin}. However,
we work on fixed grids since we are interested in exploring the
convergence properties of the basic methods.  In the case of spatially
heterogeneous domain, the comparison against published numerical
results is qualitatively coherent, but no quantitative result is
obviously possible. However, convergence of the Lyapunov-candidate
functions toward the equilibrium point is verified.

The last section of the paper presents the numerical evaluation of the
$\Lspace{1}$-OT Map described in~\citet{EvansGangbo99} from the the
approximated solution $(\OptTdensH,\OptPotH)$ of the \MKEQS\ obtained
with the \DMK\ approach.  The approximate OT maps are compared with
those obtained with the Sinkhorn algorithm with entropic
regularization by computing barycentric maps as described
in~\citet{Perrot-et-al:2016} and implemented in the package
\verb|Pot|~\citep{flamary2017pot}.  The numerical comparison on a test
case with singular optimal sets shows the accuracy, efficiency and
robustness of the proposed approach.

\section{The \LCF\  $\SLyap$}
In~\citet{Facca-et-al:2018} the authors proposed a $\Lyap(\Tdens)$
given by the product of $\Ene$ and $\Wmass$. Here the product is
replaced by the sum, and we analyze here the behavior of of this new
functional $\SLyap$ along the $\Tdens(t)$-trajectory given by the
solution of~\cref{eq:sys-intro}.  We have the following proposition:
\begin{Prop}
\label{prop-lie-der}
  Given $\bar{t}>0$ such
  that~\cref{eq:sys-intro} admits a solution pair 
  $\left(\Tdens(t),\Pot(t)\right)$ with
  $\Cont[1]$-regularity in time for all $t\in[0,\bar{t}[$,
  then $\SLyap(\Tdens(t))$ is strictly decreasing in time and 
  its time derivative is given by:
  \begin{equation}
    \label{eq:lie-der}
    \frac{d}{dt}
    \SLyap(\Tdens(t))
    =
    -
    \frac{1}{2}
    \int_{\Domain}{
      \Tdens(t)
      \left(
        |\Grad \PotOp( \Tdens(t) )| - 1
      \right)^2
      \left(
        |\Grad \PotOp( \Tdens(t) )| + 1
      \right)
    }\dx.
  \end{equation}
\end{Prop}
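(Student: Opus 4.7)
The plan is to split $\SLyap$ along the trajectory into its two parts, differentiate each in time, and recombine using the pointwise algebraic identity $(|\Grad\Pot|-1)(1-|\Grad\Pot|^{2})=-(|\Grad\Pot|-1)^{2}(|\Grad\Pot|+1)$. Writing $\Pot(t)=\PotOp(\Tdens(t))$ throughout, the mass contribution is immediate from the ODE~\cref{eq:sys-intro-dyn}:
\[
\frac{d}{dt}\Wmass(\Tdens(t))
=\frac{1}{2}\int_{\Domain}\Dt{\Tdens}\,dx
=\frac{1}{2}\int_{\Domain}\Tdens\bigl(|\Grad\Pot|-1\bigr)\,dx.
\]

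For the energy term I would pass to the equivalent expression $\Ene(\Tdens)=\tfrac12\int_{\Domain}\Tdens|\Grad\Pot|^{2}\,dx$ stated in the paper and exploit the fact that $\Pot$ is the unique maximizer in the variational definition of $\Ene$. An envelope (Danskin-type) argument then identifies the functional derivative as $\delta\Ene/\delta\Tdens=-|\Grad\Pot|^{2}/2$, so that
\[
\frac{d}{dt}\Ene(\Tdens(t))
=-\frac{1}{2}\int_{\Domain}|\Grad\Pot|^{2}\,\Dt{\Tdens}\,dx
=-\frac{1}{2}\int_{\Domain}\Tdens\bigl(|\Grad\Pot|-1\bigr)|\Grad\Pot|^{2}\,dx.
\]
Summing the two contributions and factoring via the identity above produces exactly~\cref{eq:lie-der}. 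Strict negativity then follows because the integrand has a definite sign and, together with the positivity of $\Tdens(t)$ for short times guaranteed by~\cref{eq:bound_cond_d} and the dynamics, vanishes only when $|\Grad\Pot|\equiv 1$ a.e.\ on $\{\Tdens>0\}$; in that case $(\Tdens,\Pot)$ already solves the \MKEQS\ and one sits at the conjectured equilibrium.

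The hard part will be to justify the envelope step rigorously under the regularity available: a priori $\PotOp(\Tdens(t))$ need not be differentiable in $t$ in a strong pointwise sense. A cleaner alternative that I would adopt if the abstract argument becomes delicate is to differentiate the elliptic equation~\cref{eq:sys-intro-div} in time, obtaining
\[
-\Div\bigl(\Dt{\Tdens}\,\Grad\Pot+\Tdens\,\Grad\Dt{\Pot}\bigr)=0,
\]
test against $\Pot$, and integrate by parts using the homogeneous Neumann condition to deduce the key identity
\[
\int_{\Domain}\Tdens\,\Grad\Dt{\Pot}\cdot\Grad\Pot\,dx
=-\int_{\Domain}\Dt{\Tdens}\,|\Grad\Pot|^{2}\,dx.
\]
Direct time differentiation of $\tfrac12\int_{\Domain}\Tdens|\Grad\Pot|^{2}\,dx$ then gives two summands, one of which is precisely the left-hand side above; the identity cancels all $\Dt{\Pot}$-contributions and reproduces the same formula for $\tfrac{d}{dt}\Ene$. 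This second route relies only on the $\Cont[1]$-in-time regularity assumed in the proposition, together with the H\"older regularity of $\Tdens$ and $\Lspace{\infty}$ regularity of $\Forcing$ already invoked in~\citep{Facca-et-al:2018} to ensure that $\PotOp(\Tdens(t))$ is a classical solution of~\cref{eq:sys-intro-div} with controlled gradient.
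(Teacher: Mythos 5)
Your proposal is correct, and your ``cleaner alternative'' is in fact exactly the paper's proof: differentiate the weak form of the elliptic equation~\cref{eq:sys-intro-div} in time, test the resulting identity against $\Pot$ itself, and use it to cancel the $\Grad\Dt{\Pot}$-contribution in $\tfrac{d}{dt}\Ene(\Tdens(t))$, leaving $\tfrac{d}{dt}\Ene = -\tfrac12\int_\Domain \Dt{\Tdens}|\Grad\Pot|^2\dx$. Your primary route, reading off $\delta\Ene/\delta\Tdens = -|\Grad\Pot|^2/2$ via an envelope (Danskin) argument on the sup-characterization of $\Ene$, is a legitimate and conceptually tidy repackaging of the same cancellation: the envelope theorem is abstractly what the weak-form computation achieves concretely, since both exploit the first-order stationarity of $\Pot$ in the variational functional defining $\Ene$. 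What the concrete route buys you, as you correctly anticipate, is that it discharges the regularity burden (attainment and uniqueness of the maximizer, $\Cont[1]$-differentiability of $t\mapsto\PotOp(\Tdens(t))$) directly from the hypotheses, whereas the abstract envelope step would need these established separately. Your handling of the mass term, the algebraic factorization $(|\Grad\Pot|-1)(1-|\Grad\Pot|^2)=-(|\Grad\Pot|-1)^2(|\Grad\Pot|+1)$, and the strict-negativity argument via positivity of $\Tdens(t)$ (the paper uses the bound $\Tdens(t)\geq e^{-t}\min_{x}\Tdens_0(x)>0$) all match the paper.
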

\begin{proof}
  That hypothesis of the~\namecref{prop-lie-der} are fulfilled under the
  regularity assumptions $\Tdens_0\in \Holder[](\Domain)$ and
  $\Forcing \in \Lspace{\infty}(\Domain)$ used
  in~\citet{Facca-et-al:2018}. 

  The proof starts by computing the Lie-derivative of $\Ene$:
  \begin{equation*}
    \frac{d \Ene(\Tdens(t))}{dt}
    =
    \frac{1}{2}
    \int_{\Omega}{
      \left(
        \Dt{\Tdens}(t)
        |\Grad \Pot(t)|^2
        +
        2
        \Tdens(t)
        \Grad \Dt{\Pot}(t)
        \Grad \Pot(t)
      \right)
    }\dx
  \end{equation*}
  Differentiating in time the weak form of
  equation~\cref{eq:sys-intro-div}, we obtain that $\Dt{\Pot}(t)$
  solves the following problem:
  \begin{equation*}
    \begin{aligned} 
      \int_{\Omega}{
        \Tdens(t) \Grad \Dt{\Pot}(t)
        \cdot
        \Grad
        \Ftest}\dx
      &=
      -\int_{\Omega}{
        \Dt{\Tdens}(t)
        \Grad \Pot(t)
        \cdot
        \Grad
        \Ftest}
      \dx 
      \quad 
      \forall
      \Ftest\in  \Sob[1]{}(\Omega)
    \end{aligned}
  \end{equation*}
  Substitute $\Ftest = \Pot(t)$  we
  obtain
  %
  \begin{equation*}
    \frac{d \Ene(\Tdens(t))}{dt}
    =
    -
    \frac{1}{2}
    \int_{\Omega}{
      \Dt{\Tdens}(t)
      |\Grad \Pot(t)|^2
    }\dx
  \end{equation*}
  from which~\cref{eq:lie-der} follows.

  For any $\Tdens_0>0$ we have that 
  $\Tdens(t)\geq e^{-t}\min_{x\in\Domain}\Tdens_0(x)>0$
  for $t\in[0,\bar{t}[$. Hence, all
  terms contained in~\cref{eq:lie-der} are strictly positive, and thus
  the time derivative is strictly negative.
\end{proof}

Looking at~\cref{eq:lie-der} we note that
$\frac{d}{dt} \SLyap(\Tdens(t))$ is equal to zero only if
$|\Grad\PotOp(\Tdens(t))|=1$ within the support of $\Tdens(t)$, which
is one of the constraints of the \MKEQS.  Thus the \OTD\ $\OptTdens$
becomes a natural candidate for the minimizer of $\SLyap$.
To verify this claim, we need the following duality lemma, whose proof
can by found in~\citet{shape}:
\begin{lemma}
  \label{lemma-energy-duality}
  Consider $\Tdens\in\Lplus{1}(\Domain)$,
  $\Forcing\in \Lspace{1}(\Domain)$ with zero mean, then the following
  equalities hold
  \begin{equation}
    \label{eq:energy-duality}
    \begin{aligned}
      \Ene(\Tdens)
      &=
      \!\sup_{\varphi \in \Lip{}(\Domain)}
      \left\{
        \int_{\Domain} \left(
          \Forcing \Ftest-\Tdens\frac{|\Grad\Ftest|^2}{2}
        \right)\dx
      \right\} \\
      &=\inf_{\Field \in \Vof{\Lspace{2}_{\Tdens}(\Domain)}{\Dim}}
      \left\{
        \int_{\Domain}\frac{|\Field|^2}{2} \Tdens\dx
        \, : 
        -\Div(\Tdens\Field) = \Forcing
      \right\}
    \end{aligned}
  \end{equation}
  where $\Lspace{2}_{\Tdens}(\Domain)$ indicate the space of
  real-valued functions on $\Domain$, square-integrable with respect
  to the measure $\Tdens\dx$.
\end{lemma}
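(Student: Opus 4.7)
The first equality in \cref{eq:energy-duality} is simply the definition of $\Ene(\Tdens)$ given earlier, so the real content of the lemma is the identification of the sup and inf formulations. My plan is to recognize this as an instance of Fenchel--Rockafellar convex duality, verify the easy inequality $\sup \le \inf$ by hand, and then exhibit a common extremal that matches the two quantities.

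For the inequality $\sup \le \inf$, I would take any $\Ftest \in \Lip{}(\Domain)$ and any vector field $\Field$ satisfying $-\Div(\Tdens \Field) = \Forcing$ in the weak sense, test the constraint against $\Ftest$ to get
\begin{equation*}
\int_{\Domain} \Forcing \Ftest \dx
= \int_{\Domain} \Tdens\, \Field \cdot \Grad \Ftest \dx,
\end{equation*}
and then complete the square pointwise with respect to $\Tdens\dx$:
\begin{equation*}
\int_{\Domain} \Forcing \Ftest - \Tdens \frac{|\Grad \Ftest|^2}{2} \dx
= \int_{\Domain} \Tdens \frac{|\Field|^2}{2} \dx
- \int_{\Domain} \Tdens \frac{|\Grad \Ftest - \Field|^2}{2} \dx
\le \int_{\Domain} \Tdens \frac{|\Field|^2}{2} \dx.
\end{equation*}
Taking the supremum in $\Ftest$ on the left and the infimum in $\Field$ on the right yields the claim in one direction.

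For the reverse inequality, the calculation above also says that equality is attained whenever we can choose $\Field = \Grad \Ftest$, which forces $\Ftest$ to solve the Euler--Lagrange equation $-\Div(\Tdens \Grad \Ftest) = \Forcing$, i.e.\ $\Ftest^\star = \PotOp(\Tdens)$, with the matching primal minimizer $\Field^\star = \Grad \PotOp(\Tdens)$. Under the regularity setting used in \citet{Facca-et-al:2018}, namely $\Tdens \in \Cont^{\HolderExp}$ uniformly positive and $\Forcing \in \Lspace{\infty}$, this candidate is admissible on both sides and both quantities collapse to $\tfrac{1}{2}\int_{\Domain} \Tdens |\Grad \PotOp(\Tdens)|^2\dx$; in the general $\Tdens \in \Lplus{1}$ setting one replaces this direct construction with a Fenchel--Rockafellar argument applied to $F(\Ftest) = \int \Forcing \Ftest\dx$ and $G(p) = \tfrac{1}{2}\int \Tdens |p|^2 \dx$, whose partial Legendre transform $G^\star(w) = \tfrac{1}{2}\int |w|^2/\Tdens\,\dx$ combined with the substitution $w = \Tdens \Field$ reproduces exactly the stated inf problem together with its divergence constraint.

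The main obstacle is the degeneracy of the weighted elliptic operator on the set $\{\Tdens = 0\}$: this is what makes $\PotOp(\Tdens)$ delicate to define and what prevents a purely pointwise argument from closing. The cleanest way around it is to regularize $\Tdens$ by $\Tdens + \varepsilon$ to obtain a uniformly elliptic problem for which both the direct minimizer argument and the verification of the Fenchel--Rockafellar qualification conditions are routine, and then pass to the limit $\varepsilon \to 0$ using the monotone dependence of the two functionals on $\Tdens$ and the easy inequality established above to control the limiting gap.
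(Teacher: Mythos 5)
The paper does not actually prove this lemma; it is stated with a pointer to \citet{shape}, so there is no in-paper argument to compare against. Judged on its own merits, your outline is reasonable and is very much in the spirit of the standard Bouchitt\'e--Buttazzo-type duality proofs, and your ``easy inequality'' step is fully rigorous: testing the weak constraint $-\Div(\Tdens\Field)=\Forcing$ against $\Ftest$ and completing the square in $\Lspace{2}_{\Tdens}$ gives $\sup \le \inf$ cleanly.

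The genuine gap is in the limit passage $\varepsilon\to 0$ after regularizing to $\Tdens_\varepsilon=\Tdens+\varepsilon$. Writing $I(\Tdens)$ for the $\inf$ side, your monotonicity observations (after the substitution $v=\Tdens\Field$, which makes the constraint $-\Div v=\Forcing$ independent of $\Tdens$) give $\Ene(\Tdens_\varepsilon)\uparrow\Ene(\Tdens)$ and $I(\Tdens_\varepsilon)\le I(\Tdens)$, and you already have $\Ene(\Tdens_\varepsilon)=I(\Tdens_\varepsilon)$ for $\varepsilon>0$. Combining these only reproduces $\Ene(\Tdens)\le I(\Tdens)$, which is the easy direction you already proved; ``monotone dependence plus the easy inequality'' does not control the gap in the direction you actually need. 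What is missing is a compactness/lower-semicontinuity step on the dual side: take $v_\varepsilon=\Tdens_\varepsilon\Grad u_\varepsilon$ from the regularized optimizers, note $\int|v_\varepsilon|\dx\le (2\Ene(\Tdens_\varepsilon))^{1/2}(\int\Tdens_\varepsilon\dx)^{1/2}$ is bounded, extract a weak-$*$ limit $v$ in the space of vector measures, pass the linear divergence constraint to the limit, and invoke the joint lower semicontinuity of $(v,\Tdens)\mapsto\tfrac12\int |v|^2/\Tdens\,\dx$ under weak-$*$ convergence of $v$ and $\Tdens_\varepsilon\dx\to\Tdens\dx$. That lsc result both yields $I(\Tdens)\le\tfrac12\int|v|^2/\Tdens\,\dx\le\liminf_\varepsilon I(\Tdens_\varepsilon)=\Ene(\Tdens)$ and guarantees that the limit $v$ is absolutely continuous of the form $\Tdens\Field$ with $\Field\in\Vof{\Lspace{2}_{\Tdens}(\Domain)}{\Dim}$ (otherwise the functional is $+\infty$), so that the $\inf$ is taken over the correct admissible class. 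A similar issue lurks in your alternative Fenchel--Rockafellar route: with $\Ftest\in\Lip{}(\Domain)$ the natural dual variable lives in $(\Lspace{\infty})^{*}$, and one must still argue that the dual optimizer can be represented as $\Tdens\Field$ with $\Field\in\Lspace{2}_{\Tdens}$, which is essentially the same lsc/representation step. Flagging this explicitly would close the argument.
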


We can now state the following~\namecref{prop-min-lyap}:
\begin{Prop}
  \label{prop-min-lyap}
  Given $\Forcing=\Source-\Sink \in \Lspace{1}(\Domain)$ with zero
  mean, then the \OTD\ $\OptTdens(\Forcing)$ is a minimizer for
  $\SLyap$ with value equal to the $\Wass{1}$-Wasserstein distance
  between $\Source$ and $\Sink$.
\end{Prop}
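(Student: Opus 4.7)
The plan is to sandwich $\SLyap(\Tdens)$ from below by $\Wass{1}(\Source,\Sink)$ for every admissible $\Tdens$, and then to verify that this value is attained at $\OptTdens$. Both directions hinge on the two dual representations of $\Ene$ provided by \cref{lemma-energy-duality} and on the structural constraints~\cref{eq:MK-grad-omega} and~\cref{eq:MK-grad-supp} enjoyed by the Kantorovich potential $\OptPot$.

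First I would establish a universal lower bound. For any 1-Lipschitz test function $\varphi\in\Lip{1}(\Domain)$, one has $|\Grad\varphi|^2\leq 1$ a.e., so $\frac{1}{2}\int_{\Domain}\Tdens|\Grad\varphi|^2\dx\leq\Wmass(\Tdens)$. Substituting this into the sup-representation of $\Ene$ in \cref{lemma-energy-duality} yields
\begin{equation*}
  \Ene(\Tdens)\geq\int_{\Domain}\Forcing\,\varphi\dx-\Wmass(\Tdens),
\end{equation*}
hence $\SLyap(\Tdens)\geq\int_{\Domain}\Forcing\,\varphi\dx$ for every such $\varphi$. Taking the supremum over $\varphi\in\Lip{1}(\Domain)$ and invoking the Kantorovich dual formula $\Wass{1}(\Source,\Sink)=\sup_{\varphi\in\Lip{1}(\Domain)}\int_{\Domain}\Forcing\,\varphi\dx$ then gives $\SLyap(\Tdens)\geq\Wass{1}(\Source,\Sink)$ for every $\Tdens\in\Lplus{1}(\Domain)$.

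Second I would verify equality at $\OptTdens$ by producing matching one-sided bounds. Choosing $\varphi=\OptPot$ in the sup representation is admissible by~\cref{eq:MK-grad-omega}, and exploiting $|\Grad\OptPot|=1$ a.e. on $\{\OptTdens>0\}$ from~\cref{eq:MK-grad-supp} yields $\Ene(\OptTdens)\geq\int_{\Domain}\Forcing\OptPot\dx-\Wmass(\OptTdens)$. Dually, $\Field=\Grad\OptPot$ is admissible in the inf representation thanks to~\cref{eq:MK-elliptic}, and gives $\Ene(\OptTdens)\leq\frac{1}{2}\int_{\Domain}\OptTdens|\Grad\OptPot|^2\dx=\Wmass(\OptTdens)$. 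To tie these two inequalities to $\Wass{1}$ I would test~\cref{eq:MK-elliptic} weakly with $\OptPot$ itself, which produces
\begin{equation*}
  \int_{\Domain}\Forcing\OptPot\dx
  =\int_{\Domain}\OptTdens|\Grad\OptPot|^2\dx
  =\int_{\Domain}\OptTdens\dx
  =2\Wmass(\OptTdens),
\end{equation*}
and by Kantorovich duality the leftmost quantity equals $\Wass{1}(\Source,\Sink)$. Feeding this identity back into the two one-sided bounds forces $\Ene(\OptTdens)=\Wmass(\OptTdens)=\Wass{1}(\Source,\Sink)/2$, hence $\SLyap(\OptTdens)=\Wass{1}(\Source,\Sink)$, which matches the universal lower bound of the first step and simultaneously identifies $\OptTdens$ as a minimizer and $\Wass{1}(\Source,\Sink)$ as the minimum.

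The main technical care will lie in the two integrations by parts just invoked: one needs $\OptTdens\in\Lspace{1}(\Domain)$, $\OptPot\in\Lip{1}(\Domain)$, and the homogeneous Neumann boundary condition to discard boundary contributions. These are precisely the regularity ingredients underlying \cref{lemma-energy-duality} and the MK theory of~\citet{Feldman-McCann:2002,EvansGangbo99}, so once these standard facts are cited the argument is essentially a bookkeeping exercise that plugs the pair $(\varphi,\Field)=(\OptPot,\Grad\OptPot)$ simultaneously into the sup and inf branches of \cref{lemma-energy-duality} and then uses Kantorovich duality to identify the common value.
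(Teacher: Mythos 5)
Your argument is correct and takes a genuinely different route from the paper's. The paper works on the Beckmann side of the duality: it uses only the inf-representation of $\Ene$ from \cref{lemma-energy-duality}, applies Young's inequality $\int_{\Domain}|\Field|\Tdens\dx\le\InfSupLyap(\Tdens,\Field)$, and then chains through the Beckmann problem~\cref{eq:beckmann} (whose minimizer $-\OptTdens\Grad\OptPot$ has value $\int_{\Domain}\OptTdens\dx$) to conclude $\int_{\Domain}\OptTdens\dx\le\SLyap(\Tdens)$ for all $\Tdens$, and finally observes $\SLyap(\OptTdens)=\int_{\Domain}\OptTdens\dx$. You instead work on the Kantorovich-dual side: you restrict the supremum in the sup-representation of $\Ene$ to $1$-Lipschitz test functions, note that the quadratic penalty is then absorbed by $\Wmass(\Tdens)$, and obtain directly $\SLyap(\Tdens)\ge\sup_{\varphi\in\Lip{1}}\int_{\Domain}\Forcing\varphi\dx=\Wass{1}(\Source,\Sink)$; you then verify equality at $\OptTdens$ by inserting $(\varphi,\Field)=(\OptPot,\Grad\OptPot)$ into the two branches of \cref{lemma-energy-duality} together with the identity $\int_{\Domain}\Forcing\OptPot\dx=\int_{\Domain}\OptTdens\dx$. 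Both routes invoke equivalent facts from $\Lspace{1}$ transport theory but exploit opposite ends of the duality: yours avoids the detour through Young's inequality and the Beckmann vector-field reformulation, at the price of invoking Kantorovich duality and the optimality of $\OptPot$ (implicit in system~\cref{eq:MK-problem}). One difference in scope worth noting: the paper's proof goes on to cite~\citet{Feldman-McCann:2002} to upgrade the conclusion to \emph{uniqueness} of the minimizer (as advertised in the abstract); your argument establishes minimality and identifies the minimum value as $\Wass{1}$, which is exactly what this Proposition states, and the same citation would close the uniqueness point if desired.
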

\begin{proof}
  \label{proof-min-lyap}
  This proof is based on the equivalence
  between the minimization of $\SLyap$ and the \emph{Beckmann Problem}
  in~\cref{eq:beckmann}.  Using~\cref{lemma-energy-duality},
  $\forall \Tdens \in \Lplus{1}(\Domain)$ the following equalities can
  be written:
  \begin{gather*}
    \SLyap(\Tdens)
    =
    \inf_{\Field \in \Vof{\Lspace{2}_{\Tdens}(\Domain)}{\Dim}}
    \left\{
      \InfSupLyap(\Tdens,\Field)
      \; : \;
      -\Div(\Tdens\Field) = \Forcing
    \right\}
    \\
    \InfSupLyap(\Tdens,\Field):=
    \frac{1}{2}
    \int_{\Domain}{
      |\Field|^{2}
      \Tdens
    }\dx
    +
    \frac{1}{2} 
    \int_{\Domain}{
      \Tdens
    }\dx
  \end{gather*}
  For any $\Tdens\in\Lplus{1}(\Domain)$ and for any
  $\Field\in(\Lspace{2}_{\Tdens}(\Domain))^{\Dim}$, a straight forward
  application of Young inequality yields:
  \begin{equation*}
    \int_{\Domain}{|\Field|\Tdens}\dx
    \leq
    \frac{1}{2}\int_{\Domain}{|\Field|^2\Tdens}\dx
    +
    \frac{1}{2}\int_{\Domain}{\Tdens}\dx
    =
    \InfSupLyap(\Tdens,\Field)
    \quad 
    \forall \Field \in (\Lspace{2}_{\Tdens}(\Domain))^{\Dim}
  \end{equation*}
  By taking the infimum on
  $\Field\in(\Lspace{2}_{\Tdens}(\Domain))^{\Dim}$ with
  $-\Div(\Tdens\Field)=\Forcing$ in the last inequality we obtain
  \begin{equation*}
    \inf_{\Field \in (\Lspace{2}_{\Tdens}(\Domain))^{\Dim}}
    \left\{\int_{\Domain}{|\Field|\Tdens}\dx
      \; : \;
      -\Div(\Tdens\Field) =\Forcing
    \right\}
    \leq
    \SLyap(\Tdens)
    \quad \forall \Tdens \in \Lplus{1}(\Domain)
  \end{equation*} 
  Since $\OptVel=-\OptTdens \Grad \OptPot$ solves the Beckmann
  problem, we can write:
  \begin{align*}    
    \int_{\Domain}{\Tdens^*}\dx
    =
    &\inf_{\Vel\in\Vof{\Lspace{1}(\Domain)}{\Dim}}
    \left\{\int_{\Domain}{|\Vel|}\dx
      \; : \;
      \Div\Vel=\Forcing
    \right\}
    \\
    \leq
    & \inf_{\Tdens,\Field }
      \left\{
      \int_{\Domain}{|\Field|\Tdens}\dx
      \; : \;
      -\Div(\Tdens\Field) =\Forcing
    \right\}
    \leq \SLyap(\Tdens)
  \end{align*}
  which holds for any $\Tdens\in\Lplus{1}(\Domain)$. Since
  \begin{equation*}
    \SLyap(\Tdens^*)
    =
    \int_{\Domain}{\Tdens^*}\dx
  \end{equation*}
  we have that:
  \begin{equation*}
    \SLyap(\Tdens^*)
    \leq
    \inf_{\Tdens  \in \Lplus{1}(\Domain)}\SLyap(\Tdens),
  \end{equation*}
  showing that all the above inequalities are equalities, proving that
  that the \OTP\ is a minimum for $\SLyap$.  If there were another
  minimum $\Tilde{\Tdens}\ne\OptTdens$ for $\SLyap$ we would get a
  contradiction to the result shown in~\citet{Feldman-McCann:2002} on the
  uniqueness of \OTD\ when $\Forcing \in \Lspace{1}(\Domain)$.  Since
  the integral of the \OTD\ is equal to the $\Wass{1}$-distance between
  $\Source$ and $\Sink$~\citep{EvansGangbo99} we obtain also the
  second statement of the Proposition, thus concluding the proof.
\end{proof}

\section{Numerical discretization}
\label{sec:discr}
We start this section by stressing the fact that our aim is to show
the effectiveness of simple discretization methods for the solution
of~\cref{eq:sys-intro}. Obvious improvements in both computational
efficiency and accuracy can be obtained by using more advanced
approaches, such as, e.g., higher order approximations, Newton method,
automatic mesh refinement, etc. However, our starting point is to show
that even the simple methods presented here form an efficient
and robust framework for the solution of the
$\Lspace{1}$-\MKEQS\ using the proposed dynamic setting.

\subsection{Projection spaces}
\label{sec:spatial_discr}
Our numerical approach at the solution of~\cref{eq:sys-intro} is based
on the method of lines. Spatial discretization is achieved by
projecting the weak formulation of the system of equations onto a pair
of finite dimensional spaces $(\Vspace,\Wspace)$.  We denote with
$\Triang[\MeshPar](\Domain)$ a regular triangulation of the (assumed
polygonal) domain $\Domain$, characterized by $n$ nodes and $m$ cells,
where $\MeshPar$ indicates the characteristic length of the
elements. We denote with
$\PZERO(\Triang[\MeshPar](\Domain))=
\Span\{\Wbase[1](x),\ldots,\Wbase[\Wdim](x)\}$ the space of
element-wise constant functions on $\Triang[\MeshPar](\Domain)$, i.e.,
$\Wbase[i](x)$ is the characteristic function of cell $\Cell[i]$.  The
space
$\PONE(\Triang[\MeshPar](\Domain))=
\Span\{\Vbase[1](x),\ldots,\Vbase[\Vdim](x)\}$ is the space of
continuous linear Lagrangian basis functions defined on
$\Triang[\MeshPar](\Domain)$.  We consider two different choices of
the space $\Vspace$ used in the projection of the elliptic
equation~\cref{eq:sys-intro-div}, namely
$\Vspace=\PC{1,\MeshPar}=\PONE(\Triang[\MeshPar](\Domain))$ and
$\Vspace=\PC{1,\MeshPar/2}=\PONE(\Triang[\MeshPar/2](\Domain))$.  Here
$\Triang[\MeshPar/2](\Domain)$ is the triangulation generated by
conformally refining each cell $\Cell[k]\in\Triang[\MeshPar](\Domain)$
(i.e. each element $\Cell[k]$ is divided in $2^{\Dim}$ sub-elements
having as nodes the gravity centers of the $2^{\Dim-1}$-faces
contained of $\Cell[k]$).  Again we consider different choices of
spaces also for the projection of the dynamic
equation~\cref{eq:sys-intro-dyn} by using alternatively
$\Wspace=\PC{1,\MeshPar}$ and
$\Wspace=\PC{0,\MeshPar}=\PZERO(\Triang[\MeshPar](\Domain))$, when the
projection is done on the same mesh used for the elliptic equations,
or $\Wspace=\PC{1,\MeshPar/2}$ $\Wspace=\PC{0,\MeshPar}$, when we use
the sub-grid.

Following this approach and separating the
temporal and spatial variables, the discrete potential $\PotH(t,x)$
and diffusion coefficient $\TdensH(t,x)$ are written as:
\begin{equation*}
  \PotH(t,x)=\sum_{i=1}^{\Vdim} \Pot_{i}(t)\Vbase[i](x) \quad
  \Vbase[i]\in \Vspace 
  \qquad 
  \TdensH (t,x)= \sum_{k=1}^{\Wdim}\Tdens_{k}(t) \Wbase[k](x) \quad
  \Wbase[k]\in \Wspace
\end{equation*}
where $\Vdim$ and $\Wdim$ are the dimensions of $\Vspace$ and
$\Wspace$, respectively. 
The finite element discretization yields the following
problem: for $t\geq0$ find
$(\PotH(t,\cdot),\TdensH(t,\cdot))\in \Vspace\times \Wspace$ such that
\begin{subequations}
  \label{eq:fem}
  \begin{align}
    &\int _{\Domain}\TdensH\Grad \PotH\cdot\Grad\Vbase[j]\dx
     =(\Fsource,\Vbase[j])= \int_{\Domain}\Fsource\Vbase[j]\dx 
    &\quad j=1,\ldots,\Vdim,
      \label{eq:fem_elliptic}\\
    & \int _{\Domain}\Dt{\TdensH}\Wbase[l]\dx = 
      \int_{\Domain}(|\TdensH\ \Grad \PotH|-\TdensH)\Wbase[l] \dx
    & l=1,\ldots,\Wdim,
      \label{eq:fem_ode} \\
    &\int_{\Domain}\TdensH(0,\cdot) \Wbase[j]\dx 
       = \int_{\Domain}\Tdens_0 \Wbase[l]\dx 
    &l=1,\ldots,\Wdim,
  \end{align}
\end{subequations}
where we add to~\cref{eq:fem_elliptic} the zero-mean constraint
$\int_{\Domain}\PotH\dx = 0$ to enforce well-posedness.  In matrix
form, indicating with $\Vect{\Pot}(t)=\left\{\Pot_i(t)\right\}$,
$i=1,\ldots,\Vdim$, and $\Vect{\Tdens}(t)=\left\{\Tdens_k(t)\right\}$,
$k=1,\ldots,\Wdim$, the vectors that describe the time evolution of
the projected system, we can write the following index-1 nonlinear
system of differential algebraic equations (DAE):
\begin{subequations}
  \label{eq:DAE}
  \begin{align}
    & \Matr{A}[\Vect{\Tdens}(t)]\ \Vect{\Pot}(t)=\Vect{b},
      \\
    \label{eq:DAE_ODE}
    & \Matr{M}\ \Dt{\Vect{\Tdens}}(t)=
      \Matr{B}(\Vect{\Pot}(t))\ \Vect{\Tdens}(t),
      \qquad\Matr{M}\ \Vect{\Tdens}(0)=\Vect{\Tdens_0}.
  \end{align}
\end{subequations}
The $\Vdim\times\Vdim$ stiffness matrix
$\Matr{A}[\Vect{\Tdens}(t)]$ is given by:
\begin{equation*}
  \Matr[ij]{A}[\Vect{\Tdens}(t)]=
  \sum_{k=1}^{\Wdim} \Tdens_{k}(t) 
  \int _{\Domain}\Wbase[k]\Grad\Vbase[i]\cdot\Grad\Vbase[j]\dx.
\end{equation*}
The components of the $\Vdim$-dimensional source vector $\Vect{b}$ are
$\Vect[i]{b}=\int_{\Domain}\Forcing\; \Vbase[i]\dx$.  The
$\Wdim\times\Wdim$ mass matrix $\Matr{M}$ is expressed by:
\begin{equation*}
  \Matr[k,l]{M}=\int_{\Domain}\Wbase[k]\Wbase[l]\dx.
\end{equation*}
The $\Wdim\times\Wdim$ matrix $\Matr{B}$ has the same structure of
$\Matr{M}$ and is defined as
\begin{equation*}
  \Matr[k,l]{B}[\Vect{\Pot}(t)]=
  \int_{\Domain}{
    \left(|\sum_{i=1}^{\Vdim}\Pot_i(t)\Grad\Vbase[i]|-1\right)
    \Wbase[k]\Wbase[l]\dx}
\end{equation*}
and the $\Wdim$-dimensional vector $\Vect{\Tdens}_0$ contains the
projected initial condition
$\Vect[l]{(\Tdens_0)}=\int_{\Domain}\Tdens_0\Wbase[l]\dx$.

\subsection{Time discretization}

In order to solve the DAE~\cref{eq:DAE} 
we \replaced{define}{operate} a discretization in time using either a
forward or a backward Euler scheme. Denoting with $\Deltat$ the
time-step size so that $t_{\tstepp}=t_{\tstep}+\Deltat$ and
$(\Vect{\Pot}^{\tstep},\Vect{\Tdens}^{\tstep})=
\left(\Vect{\Pot}(t_{\tstep}),\Vect{\Tdens}(t_{\tstep})\right)$, the
approximate solution at time $t_{\tstep}$ can be written as
$\PotH^{\tstep}(x)=\sum_{i}^{\Vdim}\Vect[i]{\Pot}^{\tstep}\Vbase[i](x)$
and
$\TdensH^{\tstep}(x)=\sum_{l=1}^{\Wdim}\Vect[l]{\Tdens}^{\tstep}\Wbase[l](x)$.
The forward Euler scheme is:
\begin{align*}
  &\Matr{A}[\Vect{\Tdens}^{\tstep}]\ \Vect{\Pot}^{\tstep}=\Vect{b},\qquad
    \nonumber\\
&\Vect{\Tdens}^{\tstepp}=
    (I+\Deltat \Matr{M}^{-1}\Matr{B}[\Vect{\Pot}^{\tstep}])
    \Vect{\Tdens}^{\tstep},\qquad
    \Vect{\Tdens}^{0}= \Matr{M}^{-1} \Vect{\Tdens_0}\nonumber
\end{align*}
When backward Euler is employed, the time-stepping scheme
becomes:
\begin{align*}
  &\Matr{A}[\Vect{\Tdens}^{\tstepp}]\Vect{\Pot}^{\tstepp}=\Vect{b}
  \nonumber\\
  &\Matr{M}\Vect{\Tdens}^{\tstepp}=\Matr{M}\Vect{\Tdens}^{\tstep}+
    \Deltat\Matr{B}[\Vect{\Pot}^{\tstepp}]\Vect{\Tdens}^{\tstepp},\qquad
    \Vect{\Tdens}^{0} = \Matr{M}^{-1} \Vect{\Tdens_0}\nonumber
\end{align*}
and the nonlinearity is resolved by means of the following successive
(Picard) iteration, starting from
$\Vect{\Tdens}^{0,\tstepp}=\Vect{\Tdens}^{\tstep}$:
\begin{equation*}
  \mbox{for } \nlit=0,1,2,\ldots
  \quad
  \left\{
    \begin{aligned}
      &\Matr{A}[\Vect{\Tdens}^{\nlit,\tstepp}]
      \Vect{\Pot}^{\nlit,\tstepp}=\Vect{b},\qquad
      \nonumber\\[-0.5em]
      \label{eq:p1picard}\\[-0.5em]
      &\Vect{\Tdens}^{\nlitp,\tstepp}=(\Matr{M}-\Deltat
      \Matr{B}[\Vect{\Pot}^{\nlit,\tstepp}])^{-1}\ 
      \left(\Matr{M} \Vect{\Tdens}^{\tstep}\right)\nonumber
    \end{aligned}
  \right.,
\end{equation*}
iterated until the relative difference is smaller than the prefixed
tolerance $\TolPic$:
\begin{equation*}
  \RelDiff(\TdensH^{\nlitp,\tstepp},\TdensH^{\nlit,\tstepp})=
    \frac{\|\TdensH^{\nlitp,\tstepp}-\TdensH^{\nlit,\tstepp}\|_{L^2(\Domain)}} 
         {\|\TdensH^{\nlit,\tstepp}\|_{L^2(\Domain)}}\le\TolPic,
\end{equation*}
or the number of Picard iterations $m$ reaches a prefixed maximum
$\ItPic$.  Note that when we consider $\Wspace=\PC{0,\MeshPar}$, the
matrices $\Matr{M}$ and $\Matr{B}$ are diagonal and thus trivially
invertible.

We consider that time-equilibrium has been reached when the relative
variation in $\TdensH$ ($\Var(\TdensH)$) is smaller than $\TolTime$,
i.e.,
\begin{equation*}
  \Var(\TdensH):=
  \RelDiff(\TdensH^{\tstepp},\TdensH^{\tstep})/
  \Deltat<\TolTime.
\end{equation*}
We indicate with $\OptTime$ the time when equilibrium is numerically
reached and with $\OptTdensH$ the corresponding $\TdensH^{\tstep}$.

\subsection{Solution of the linear system}
\label{sec:pcg-solution}
At each time step or each Picard iteration a linear system involving
the large, sparse, symmetric, and semi-positive matrix $\Matr{A}$ must
be solved (the linear system involving $\Matr{M}$ and $\Matr{B}$ is
diagonal or can be made diagonal with mass lumping).  We use a
Preconditioned Conjugate Gradient (PCG) method iterated until the
relative 2-norm of the residual is smaller than the tolerance $\TolPCG$.

The singularity arising from the pure Neumann boundary conditions is
addressed by maintaining the solution orthogonal to the null space
$\Span\{\Vect{1}\}$ of $\Matr{A}$. This is simply obtained as
suggested in~\citet{Bochev:2005} by starting from an initial solution
that is orthogonal to $\Vect{1}$, and when the rounding error in the
matrix-vector multiplication adds non-zero kernel components to the
current iterate vectors, orthogonalizing with respect to the vector
$\Vect{1}$.  In addition, we employ the strategy developed
in~\citet{Bergamaschi-et-al:2018} to correct for the ``near
singularity'' of the stiffness matrix as time advances. In fact, the
system dynamics drives the transport density $\TdensH$ toward zero in
large portions of the domain $\Domain$, progressively loosing
coercivity of the discrete bilinear form.  However, starting from
$\Tdens_0>0$, for a number of initial time steps we have that
$\TdensH^{\tstep}>0$. Hence, essentially we are solving a sequence of
slightly varying coercive linear systems.  Then, at each system
solution we collect spectral information on the preconditioned matrix
to update the previously calculated incomplete Choleski
($IC(\tau)$) preconditioner and enforce orthogonality with respect to
the ``near null space'' of $\Matr{A}$.  In this situation, direct
solvers are not viable and fail to reach a solution in a reasonable
amount of time.

\section{Numerical experiments}
The numerical schemes described in~\cref{sec:discr} are numerically
tested on three test-cases.  The first test compares the large-time
numerical solution against the closed form solution proposed
by~\citep{buttazzo} for given forcing functions.  We verify the
convergence toward steady-state for increasingly refined grids and
ascertain the order of accuracy of the proposed schemes.  The second
test-case is taken from~\citet{prigozhin} and is used to analyze
experimentally the stability of the proposed spatial discretizations.
In the last test-case we consider the reallocation of mass from a
centrally located support towards four disjoint sets with the aim of
verifying the ability of the proposed dynamic formulation to
approximate singular sets. In this test we also build the OT map from
the approximate transport density using the procedure described
in~\citet{EvansGangbo99}.  The resulting map is compared with the maps
computed by means of the Sinkhorn algorithm with entropic
regularization as described in~\citet{Perrot-et-al:2016}.

\subsection{Test~Case~1: comparison with closed-form
  solutions}
\label{sec:num_exp}
\begin{figure}
  \centerline{
    \includegraphics[width=0.3\textwidth]{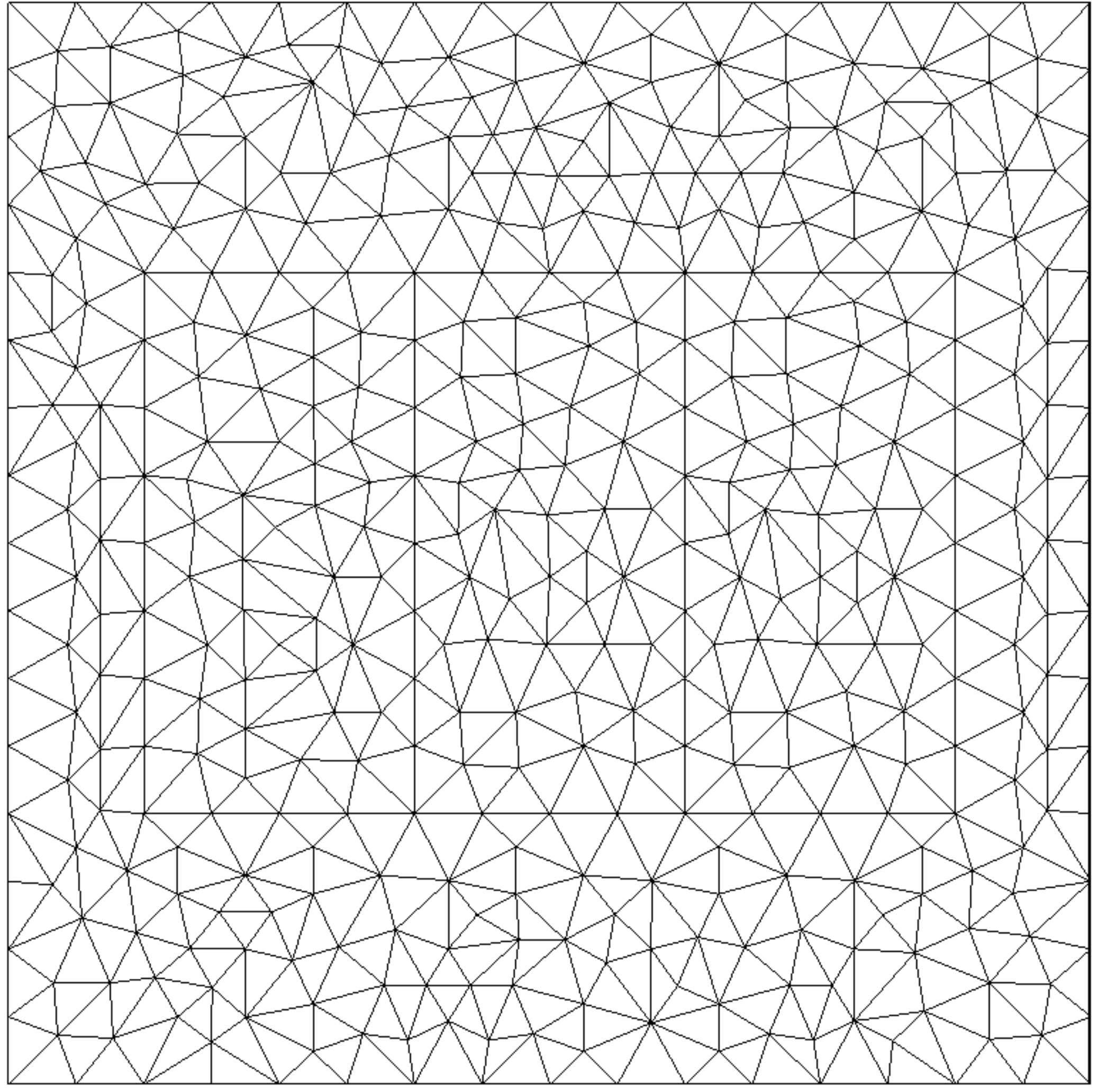}\
    \includegraphics[width=0.3\textwidth]{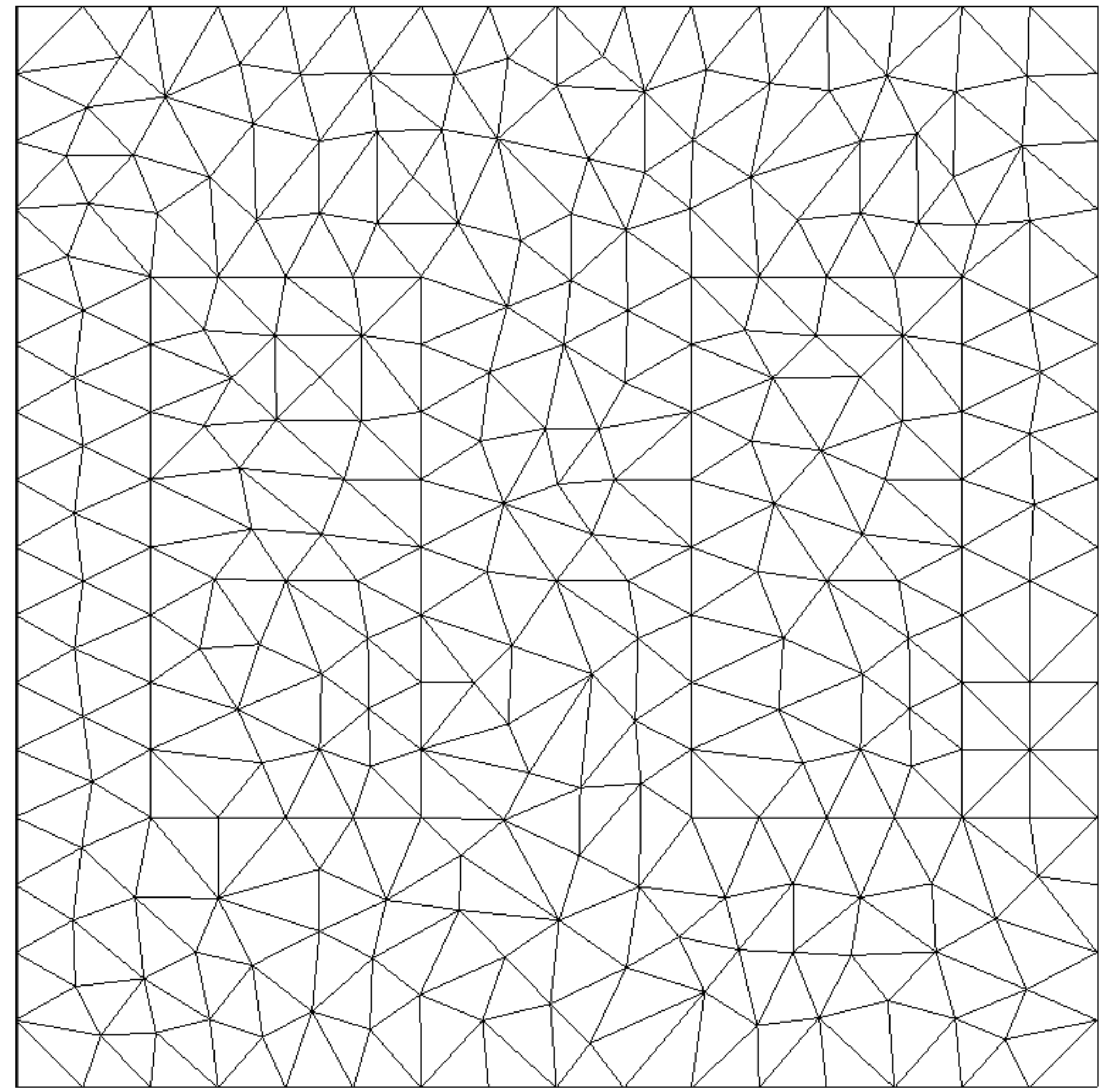}
  }
  \caption
  { Domain $\Domain$ and supports $\SuppPlus$, $\SuppCenter$, and
    $\SuppMinus$ for Test~Case~1, together with the unrefined initial
    meshes. From the left to the right: Mesh~1 (constrained Delaunay,
    438 nodes and 810 elements), and Mesh~2 (constrained Delaunay, 297
    nodes and 528 elements). The edges of Mesh~1 are aligned with the
    supports of $\Fsource$ and $\OptTdens(\Fsource)$. Mesh~2 is
    aligned only with the supports of $\Fsource$.}
  \label{fig:meshes}
\end{figure}

In this first set of tests we consider a square domain in $\REAL^2$,
$\Domain=[0,1]\times[0,1]$, and a zero-mean forcing function
$\Fsource$ supported in two rectangles $\SuppPlus$ and $\SuppMinus$
contained in $\Domain$, where $\Fsource$ assumes opposite signs
(\cref{fig:meshes}). The different supports are given by:
\begin{align*}
  \SuppPlus
  =
  \left[\frac{1}{8},\frac{3}{8} \right] \times 
  \left[\frac{1}{4},\dfrac{3}{4}\right]
  \quad
  \SuppMinus
  =
  \left[\frac{5}{8},\frac{7}{8} \right] \times 
  \left[\frac{1}{4},\dfrac{3}{4}\right]
\end{align*}
To test our numerical schemes we set up two problems that
differ from each other by the specific choice of $\Fsource$.
The first  
test considers a continuous forcing function $\Fcont$ with opposite
sign in $\SuppPlus$ and $\SuppMinus$, while in the second case 
a piecewise constant function $\Fcost$ is used. Their expression is
given by:
\begin{gather*}
  \Fcont(x,y)\ ,\ \Fcost(x,y)=
  \left\{
    \begin{aligned}
      &2\sin\left(4\pi x\!-\!\frac{\pi}{2}\right)
      \sin\left(2\pi y \!-\!\frac{\pi}{2}\right)
      \ , \ 
      &&\quad   2
      \quad \mbox{ in }\SuppPlus\\
      -&2\sin
      \left(
        4\pi x\!-\!\frac{5}{2}
      \right)
      \sin
      \left(
        2\pi y\! - \!
        \frac{\pi}{2}
      \right)
      \ , \
      &&-2
      \quad\mbox{ in }\SuppMinus\\
      & 0,0 \quad \mbox{elsewhere}
    \end{aligned}
  \right.
\end{gather*}
From~\citet{buttazzo} we derive explicit formulas for the \OTD\
$\OptTdens(\Fcont)$ and $\OptTdens(\Fcost)$ together with their
support given by $\SuppOT=\SuppPlus\cup\SuppMinus\cup\SuppCenter$
with $\SuppCenter=[3/8,5/8]\times[1/4,3/4]$.  With this explicit
solution, we can verify the experimental convergence rates at
large times for the different proposed schemes.  We use two
different initial triangulation settings (Mesh~1 and Mesh~2,
see \cref{fig:meshes}), each uniformly refined four times to yield
four refinement levels.
Both meshes are constrained to be aligned with the exact
supports of $\Fsource^+$ and $\Fsource^-$, so that the condition
$\sum_{i}\int_{\Omega}\Fsource(x)\Vbase[i]{}\dx=0$ can be imposed
exactly, and, at each level, have approximately the same number of
nodes and elements. 
Mesh~1 (\cref{fig:meshes}, left) is a constrained Delaunay
triangulations with edges aligned with the boundary of
$\SuppOT$. Mesh~2 is also a constrained Delaunay triangulation but is
not aligned with $\SuppOT$ in the area between $\SuppPlus$ and
$\SuppMinus$.  In the latter case, we expect convergence to be
influenced also by the geometric error in approximating the 
boundaries of the support $\SuppOT$ of $\OptTdens$.  Sensitivity
to initial conditions is tested by employing the following different
initial data $\Tdens_0^{(i)}$:
\begin{gather}
  \Tdens^{(1)}_0\equiv 1;
  \Tdens^{(2)}_0(x,y) = 0.1+4|x-0.5,y-0.5\|^2;\nonumber\\[-0.75em]
  \label{eq:initial-conditions}\\[-0.75em]
  \Tdens^{(3)}_0(x,y)=3+2\sin(8\pi x)\sin(8\pi y).\nonumber
\end{gather}
Note that in these tests we do not focus on computational speed, but
only on the numerical behavior of the schemes.  Thus we do not limit
the minimum time step size and the maximum number of iterations (in
both time-stepping and the PCG algorithm used to solve the linear
system of algebraic equations), and use tight tolerances to determine
when time equilibrium is reached and termination of linear and
nonlinear iteration: $\TolPic=10^{-11}$ and $\TolTime=5\times10^{-9}$,
$\TolPCG=10^{-13}$.  In the simulations presented here we adopt both
for the forward and backward Euler time-stepping and vary the time
step size by setting
$\Deltat[\tstepp]=\min(1.05\times\Deltat[\tstep],
\Deltat[\mbox{{\scriptsize max}}])$, where
$\Deltat[\mbox{{\scriptsize max}}]=0.5$.  Preliminary experiments are
used to calibrate this strategy so that it ensures the stability of
the forward Euler scheme, or equivalently, the convergence of the
Picard iteration.  Convergence as $\MeshPar\Tendsto0$ is explored by
looking also at the time behavior of the $\Lspace{2}(\Domain)$ relative
$\Tdens$-error defined as:
\begin{equation*}
  \ErrTdens(\Tdens):=
  \|\Tdens-\OptTdens(\Fsource)\|_{\Lspace{2}(\Domain)}
  \ / \
  \|\OptTdens(\Fsource)\|_{\Lspace{2}(\Domain)}.
\end{equation*}

\paragraph{Convergence toward steady-state equilibrium}

\begin{figure}
  \centerline{
    \includegraphics[width=\textwidth]{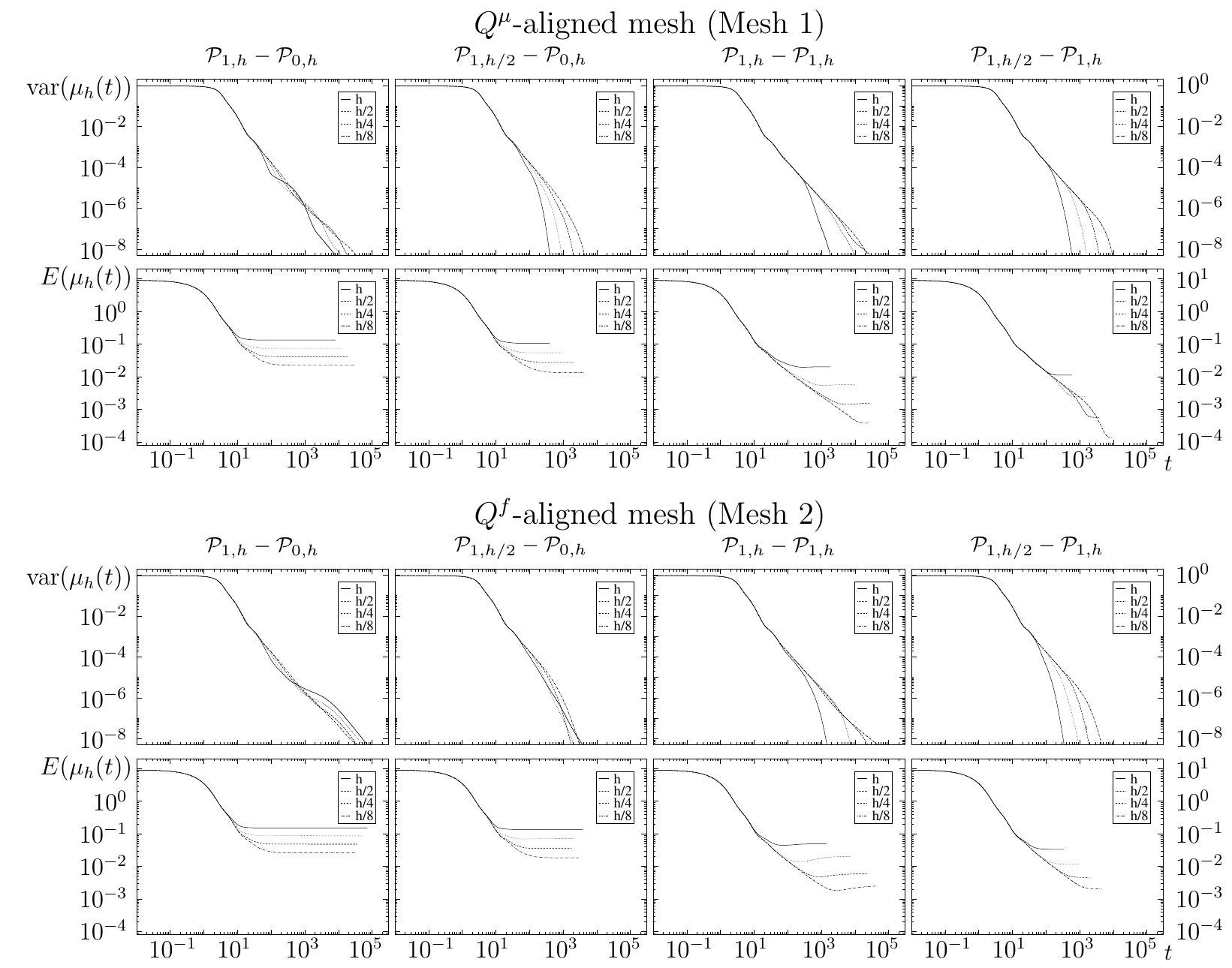}
  }
  \caption { Convergence toward equilibrium in the case of continuous
    forcing ($\Fsource=\Fcont$). The log-log plots of $\Var(\TdensH(t,\cdot))$
    and $\ErrTdens(\TdensH(t,\cdot))$ vs. time are reported for Mesh~1
    ($\SuppOT$-aligned, top block) and for Mesh~2 ($\SuppF$-aligned,
    bottom block).  The columns refer from left to right to the
    results obtained with $\PC{1,\MeshPar}-\PC{0,\MeshPar}$,
    $\PC{1,\MeshPar/2}-\PC{0,\MeshPar}$,
    $\PC{1,\MeshPar}-\PC{1,\MeshPar}$,
    $\PC{1,\MeshPar/2}-\PC{1,\MeshPar}$, respectively.  }
  \label{fig:fcost_p1p0}
\end{figure}

\Cref{fig:fcost_p1p0} reports the log-log scale plots of
$\Var(\TdensH(t))$ and $\ErrTdens(\TdensH(t))$ vs. time, calculated for the
two mesh families in the case of continuous forcing function $\Fcont$.
Each curve in each sub-plot corresponds to a different mesh level.
The columns are related to different
combinations of spatial discretizations. Only results of the Explicit
Euler time-stepping scheme are shown, the results of the Implicit
Euler method being identical. The first set of plots (first two rows)
are relative to the $\SuppOT$-aligned mesh set, while the lower set
reports the results for the $\SuppF$-aligned meshes.

The results show a steady convergence toward the equilibrium point
$\OptTdensH$. The $\TdensH$ variation, $\Var(\TdensH(t))$, displays a
monotone behavior for all schemes, with an expected geometric
convergence rate toward steady-state, as evidenced by the slope of the
rectilinear portions of the curves that coincides for all mesh levels
and types.  At increasing refinement levels the convergence curves
have a common initial behavior for all schemes but start to diverge
approximately when the corresponding spatial accuracy limit is
attained. Accuracy saturation in the error plots ($\ErrTdens(\TdensH(t))$
vs. $t$) occurs at the same time at which $\Var(\TdensH(t))$ start
diverging.  More uncertain profiles are obtained when spatial
discretization is performed on the same mesh for the pair
$(\TdensH,\PotH)$ for both $\PONE-\PZERO$ and $\PONE-\PONE$
discretization spaces. The reason for the loss of regularity is to be
attributed to oscillations in the cell gradients that cause amplified
oscillations in the corresponding transport density. Spatial averaging
of the gradient magnitudes, leading to the
$\Triang[\MeshPar]-\Triang[\MeshPar/2]$ formulation, shows a much
smoother behavior with a faster convergence towards equilibrium. We
postpone a more detailed discussion of this phenomenon to
\cref{sec:confr_prigozhin}, where a more challenging test case is
approached.

Looking at the bottom half of \cref{fig:fcost_p1p0}, we see the effect
of using meshes that are not aligned with the support of the optimal
transport density. Because of the discontinuity in $\TdensH$ occurring
across the boundary of $\SuppOT$, convergence is limited by the
geometric convergence of the triangular shapes towards this boundary,
and the global attainable accuracy is bounded by this error.  We
observe a consistent behavior of the error for both mesh-types at
different $\MeshPar$ levels. The accuracy levels at which the error
saturates decrease consistently with the expected order of spatial
convergence of the different schemes, when the geometric error is
negligible. This is clearly observable by looking at the plots of
$\Var(\TdensH(t))$ for the $\PC{1,\MeshPar/2}-\PC{0,\MeshPar}$, and
the $\PC{1,\MeshPar/2}-\PC{1,\MeshPar}$, cases, where the optimal
second order convergence of the latter approach is observable from the
fact that difference in the attained accuracy levels are doubled with
respect to the first order approach. Higher order methods display
higher accuracy, but the geometric error prevents the realization of
optimal convergence rates.

\paragraph{Convergence of the spatial discretization}

\begin{figure}
  \includegraphics[width=\textwidth]{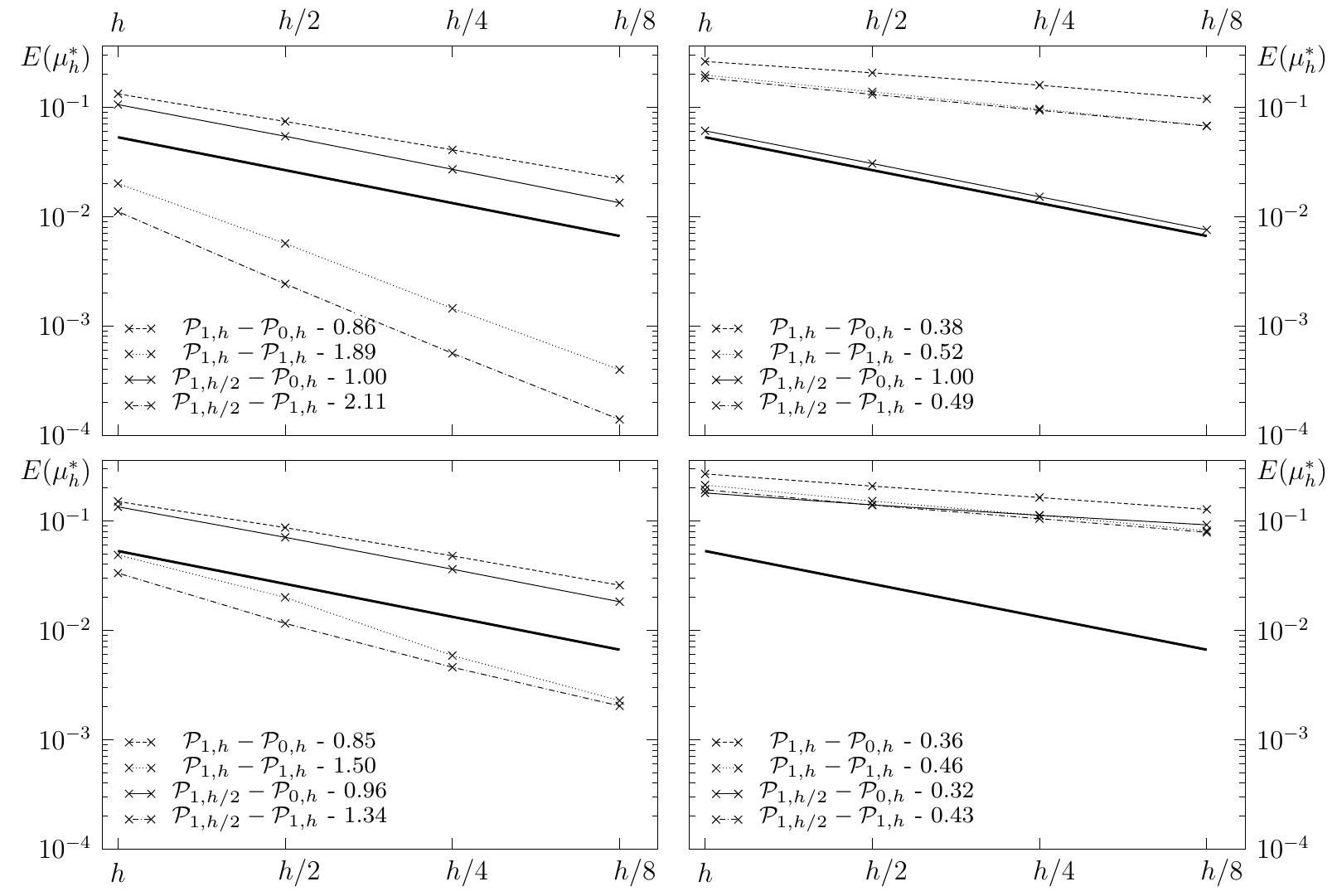}
  \caption{Behavior of $\ErrTdens(\OptTdensH)$ vs. $\MeshPar$ for the
    different discretization methods. The results for the continuous
    forcing function $\Fcont$ are shown in the left column, while the
    right column reports the results for $\Fcost$. The top row is
    relative to the Mesh-1 sequence (aligned with $\SuppOT$), while
    the bottom row corresponds to the Mesh-2 sequence (aligned only
    with $\SuppF$). For visual reference, the first order convergence
    line is also plotted with a thick solid trait. The average
    experimental convergence rates are reported in the legends of each
    plot next to the discretization method.}
  \label{fig:convergence}
\end{figure}

\begin{figure}
  \centerline
  {
    {\includegraphics[width=0.4\textwidth]{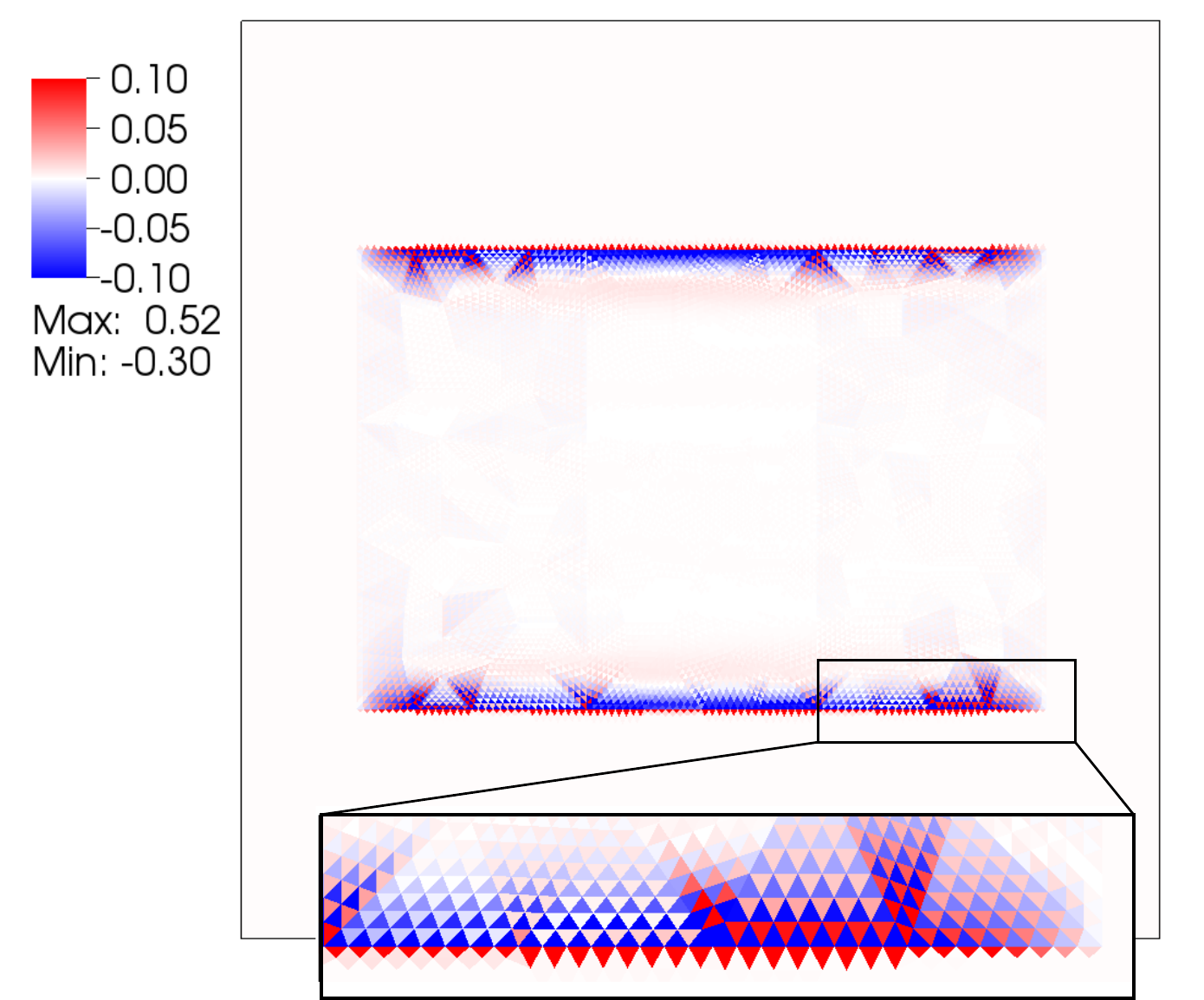}}
    \qquad
    {\includegraphics[width=0.4\textwidth]{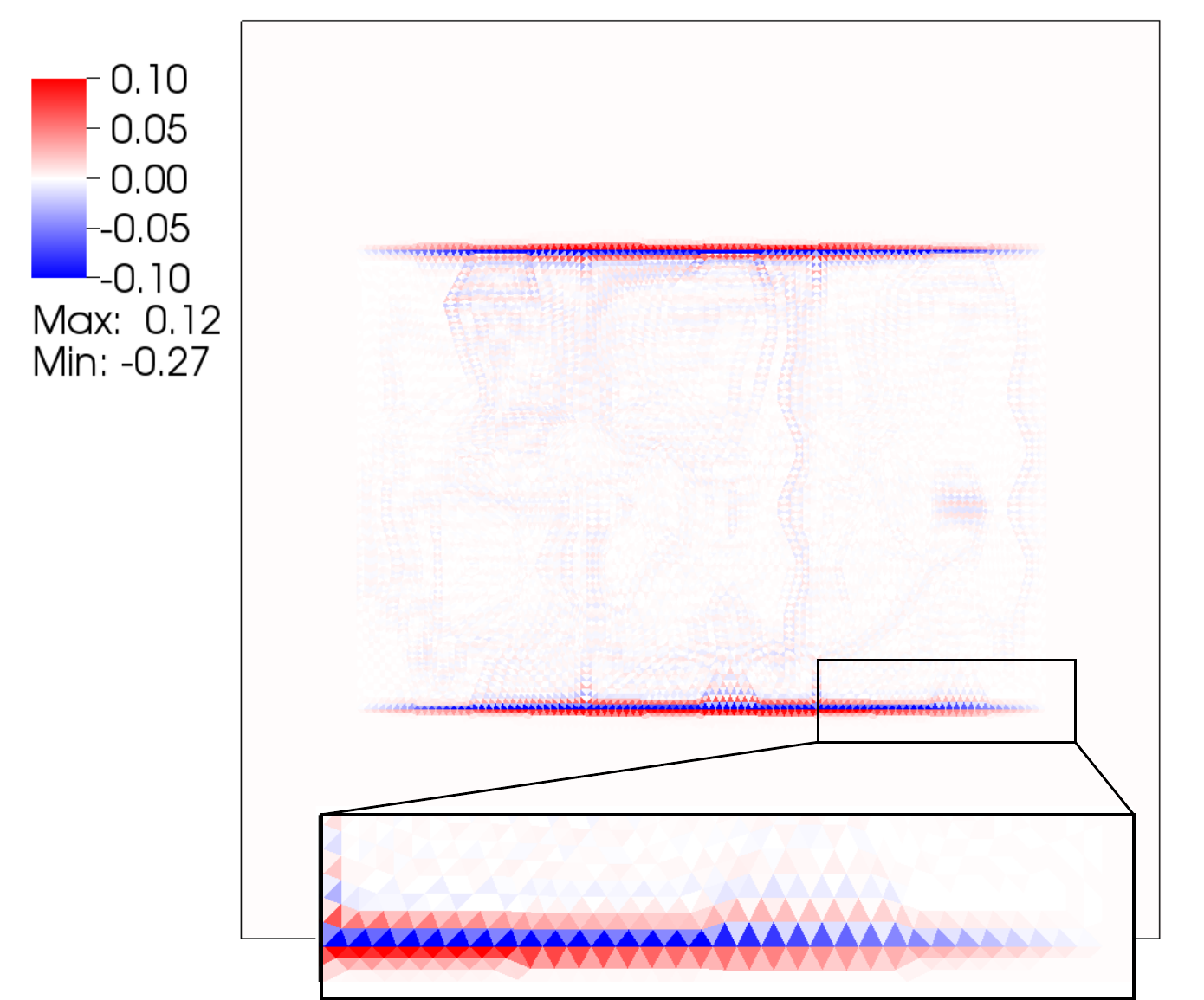}}
  }
  \centerline
  {
    {\includegraphics[width=0.4\textwidth]{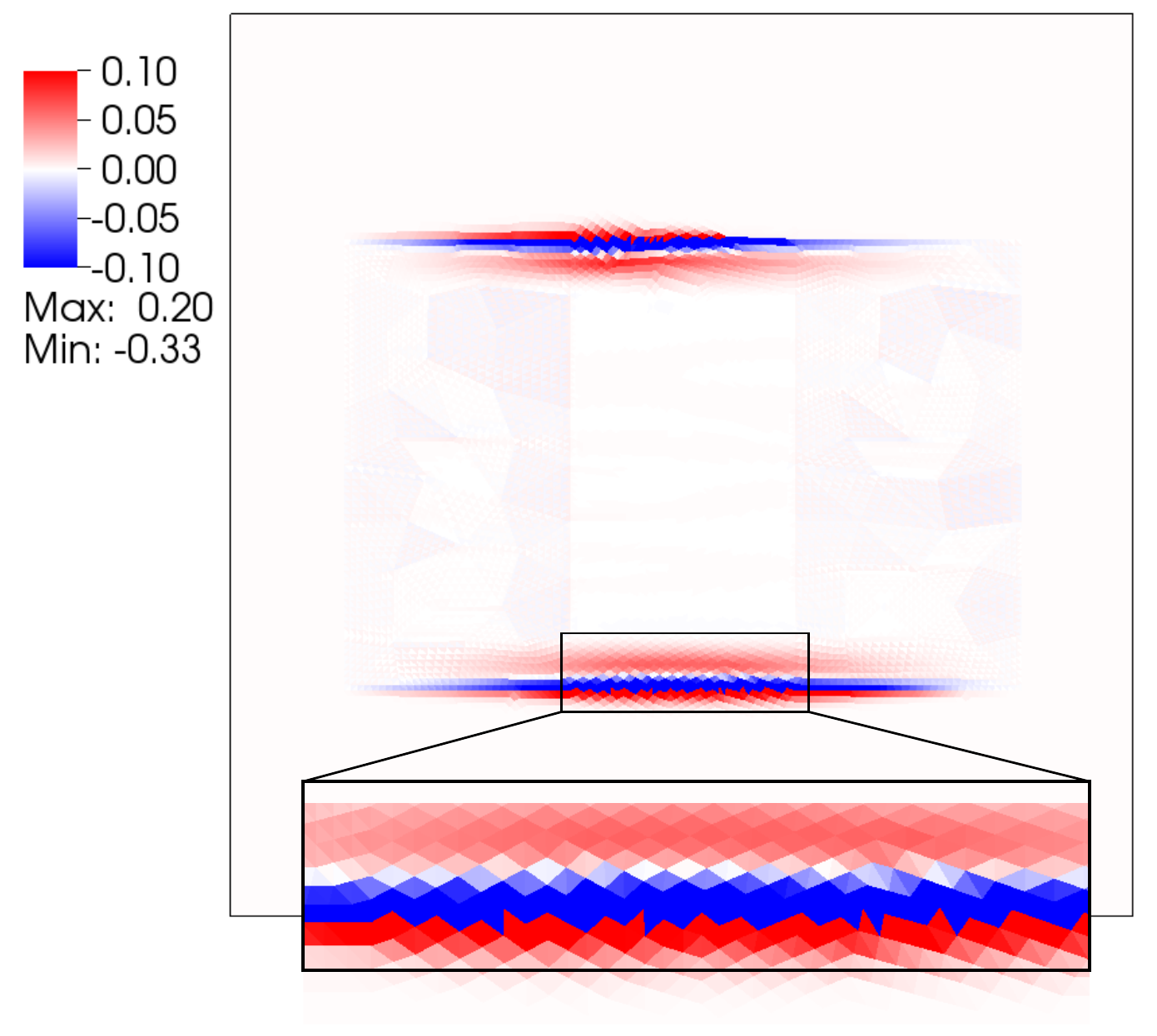}}
    \qquad
    {\includegraphics[width=0.4\textwidth]{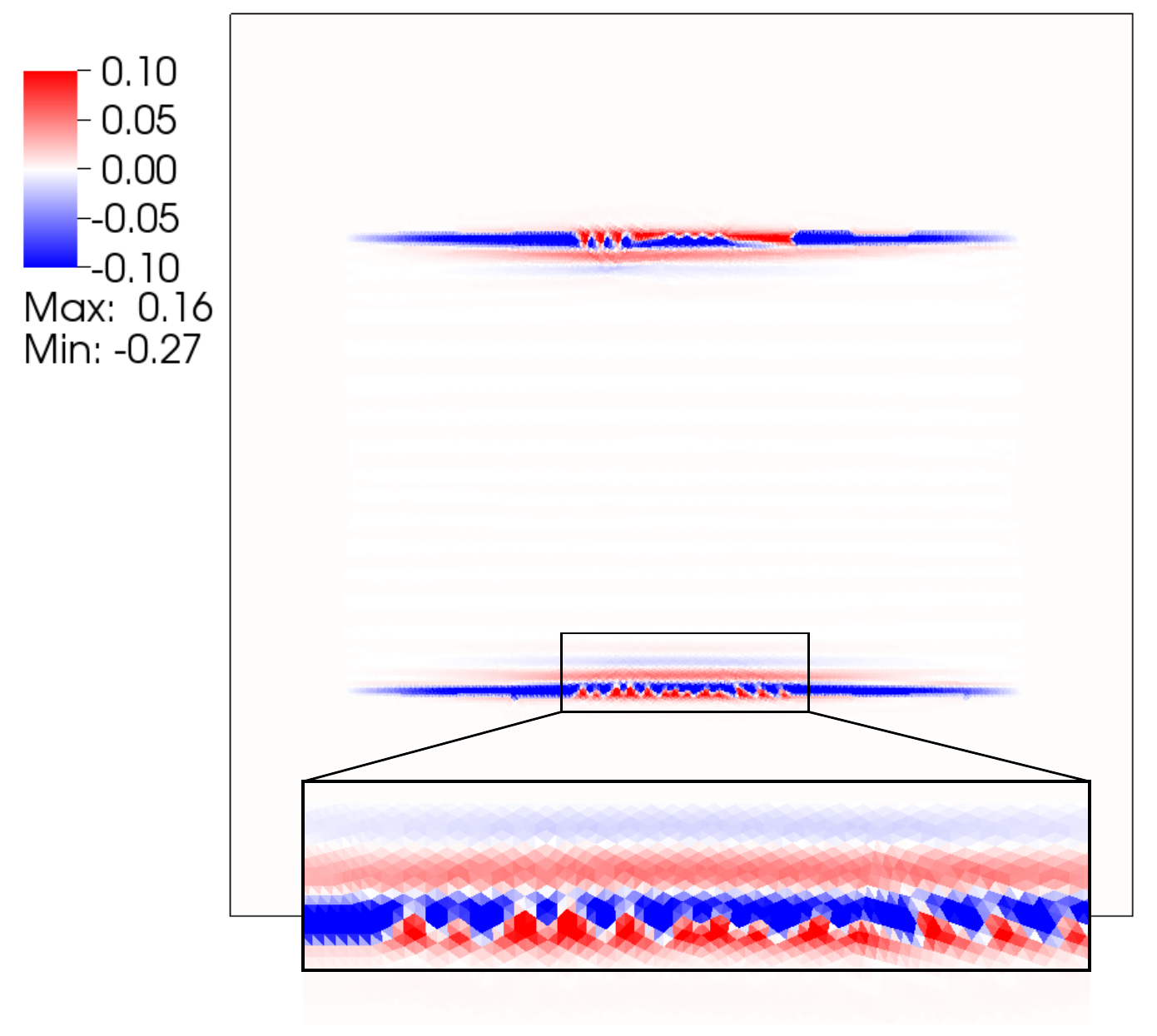}}
  }
  \caption{Spatial distribution of the error
    $\OptTdensH-\OptTdens(\Fcost)$ at steady state for the piecewise
    constant forcing function $\Fcost$.  The upper row reports the
    results on the finest level of Mesh-1 obtained with the
    $\PC{1,\MeshPar}-\PC{0,\MeshPar}$, (left) and
    $\PC{1,\MeshPar}-\PC{1,\MeshPar}$, (right).  The lower row shows
    the results on the finest level of Mesh-2 from the
    $\PC{1,\MeshPar/2}-\PC{0,\MeshPar}$, (left) and
    $\PC{1,\MeshPar/2}-\PC{1,\MeshPar}$, (right) approaches.  }
  \label{fig:err_f_cost}
\end{figure}

We would like to recall that continuity of the transport density
$\OptTdens(\Fsource)$ when the forcing term $\Fsource$ is continuous
was proved in $\REAL^2$ in~\citet{fragala} under some assumptions on
$\Fsource$. However, except for partial regularity results along
transport rays~\citep{buttazzo}, the general case seems to be an open
question.  In our test cases, for both $\Fcont$ and $\Fcost$ forcings,
strong variations in $\TdensH$ are present in a direction orthogonal
to the boundary of $\SuppOT$ in the central portion of the domain
(outside $\SuppF$). Because of these variations, which in the
discontinuous forcing case are actual $\OptTdens$-discontinuities, we
expect a loss of convergence in the FE solution. We note, however,
that convergence towards steady-state is not influenced by spatial
errors, as shown in the previous discussion.

The experimental convergence profiles for the different methods are
reported in \cref{fig:convergence}. The column on the left groups the
results relative to the more regular case of continuous forcing
function $\Fcont$. The right column reports the results obtained for
piecewise constant forcing $\Fcost$. The top and bottom rows identify
the mesh sequences aligned with the boundary of $\SuppOT$ or with the
boundary of $\SuppF$, respectively.

From the two plots on the left, we can argue that: i) all methods
attain optimal convergence when the Mesh-1 sequence is used; ii) the
$\Triang[\MeshPar]-\Triang[\MeshPar/2]$ combination is characterized
by a smoother behavior; iii) the use of the Mesh-2 sequence, which we
recall is aligned only with the boundaries of $\SuppF$ and not those
of $\SuppOT$, triggers the emergence of geometrical errors that cause
a sizeable reduction on the convergence rates of both $\PONE-\PONE$
and $\PONE-\PZERO$ schemes.

As expected, the results for the discontinuous forcing function (right
column) are characterized by an important loss of convergence rate for
all schemes, except the $\PC{1,\MeshPar/2}-\PC{0,\MeshPar}$ in
combination with the $\SuppOT$-aligned meshes. The use of a
$\Triang[\MeshPar]-\Triang[\MeshPar/2]$ combination seems to be more
robust.  This is confirmed by the spatial distribution of the error
$\OptTdensH-\OptTdens(\Fcost)$ shown in \cref{fig:err_f_cost}. In this
figure we report the results obtained with the
$\PC{1,\MeshPar}-\PC{0,\MeshPar}$ (upper left panel) and the
$\PC{1,\MeshPar}-\PC{1,\MeshPar}$ (upper right panel) for Mesh~1, and
the $\PC{1,\MeshPar/2}-\PC{0,\MeshPar}$ (lower left panel) the
$\PC{1,\MeshPar/2}-\PC{1,\MeshPar}$ (lower right panel) for Mesh~2.
The plots suggest that the $\PONE-\PONE$ approach localizes the error
on the north and south boundaries of $\SuppOT$, where the jump in
$\OptTdens$ is concentrated. The $\PONE-\PZERO$ approach, on the other
hand, displays an additional small but non negligible error on the
support of the forcing function $\SuppF$.  The zooms on the pictures
show clear oscillations for the one-mesh methods (upper row) in both
directions orthogonal and parallel to the $\TdensH$-discontinuity.  On
the contrary, the methods based on two-meshes (lower row) exhibit a
monotone error behavior along the boundary of $\SuppOT$, but the
mis-alignment of the triangle edges causes an increased error as
compared to the Mesh-1 results. The error slightly oscillates in the
direction normal to the $\TdensH$-jump due to the gradient
reconstruction. It is evident that the smoothing due to the averaging
of the gradient magnitude on the larger triangles helps in reducing
overall oscillations. This will become more evident when we will
discuss \deleted{ these oscillations in connection with a more
  challenging test case } in \cref{sec:confr_prigozhin}.

\paragraph{Implicit Euler and convergence of the Picard scheme}

In the case of implicit Euler time-stepping, the nonlinear system is
solved by Picard iteration as described in~\cref{eq:p1picard}.
Unfortunately, the lack of a uniform bound on
$|\Grad \Pot(t)|\, \forall t \geq 0 $ prevents the theoretical
derivation of an estimate of the contraction factor.  Experimentally,
all numerical experiments displayed a number of iterations of the
Picard scheme increasing linearly with the time step size $\Deltat$,
suggesting a fixed rate of contraction. This was evaluated by
computing the relative $\TdensH$-variation:
\begin{equation*}
  C(\tstep):=
  \frac{
    \|\TdensH^{\nlit^*,\tstep}-\TdensH^{\nlit^*-1,\tstep}\|_{L^2(\Domain)}
  }
  {
    \|\TdensH^{\nlit^*-1,\tstep}-\TdensH^{\nlit^*-2,\tstep}\|_{L^2(\Domain)}
  }
\end{equation*}
where $\nlit^*$ is the Picard iteration number at convergence.
Independently of the spatial discretization method, preliminary
numerical experiments, not reported here, showed that
$C(\tstep)\approx \Deltat$, suggesting that $\Deltat$ can be used as a
proxy to control the time-step evolution in this case.  Values
$\Deltat[\tstep] \ge 1$ caused non-convergence of the Picard iteration,
thus we impose an upper limit of
$\Deltat[\mbox{{\scriptsize max}}]=0.5$. This choice offered a good
trade-off between minimizing the number of Picard iterations and
maximizing the time-step size. At the same time, convergence of the
Picard scheme was achieved with an acceptably small number of Picard
iterations, averaging between 2 and 8 depending on the simulation.
Because of the exponential decay of the solution in time,
as predicted by the mild solution of~\cref{eq:sys-intro-dyn}, 
the time step size was incremented at every step by a factor 1.05.

\paragraph{Dynamics of $\SLyap\left(\Tdens(t)\right)$,
  $\Wass{1}$-distance and computational cost}

\begin{figure}
  \begin{minipage}{0.48\textwidth}
    \raggedleft\includegraphics[width=0.95\textwidth]
    {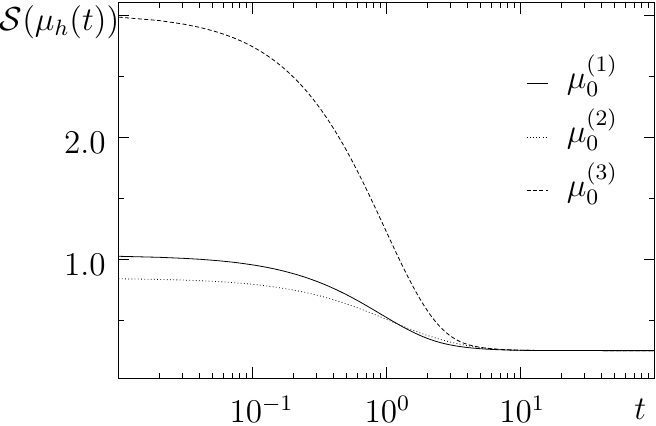}
    \\
    \raggedleft\includegraphics[width=1.01\textwidth]
    {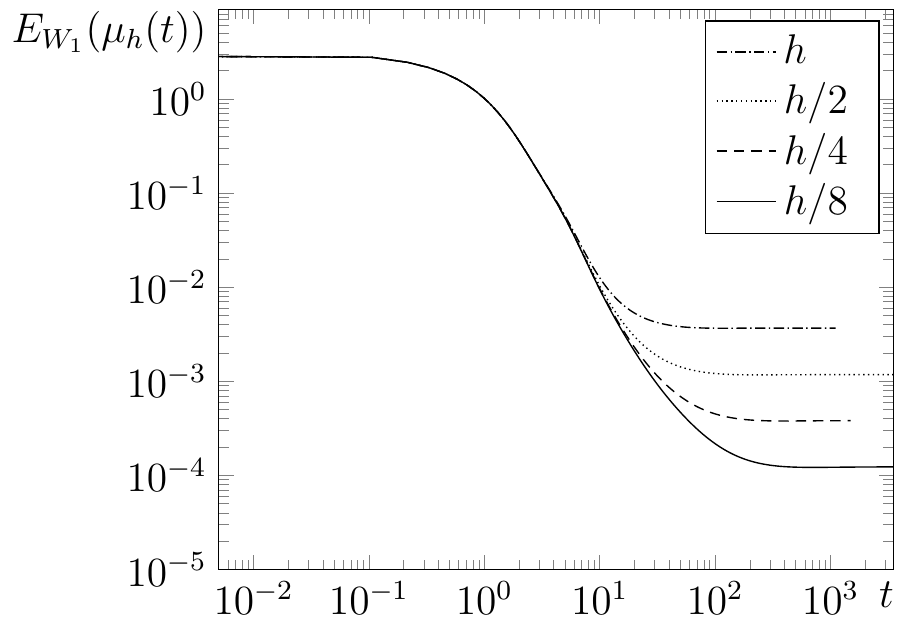}
  \end{minipage}
  \begin{minipage}{0.48\textwidth}
    \centerline{
      \includegraphics[width=0.9\textwidth]
    {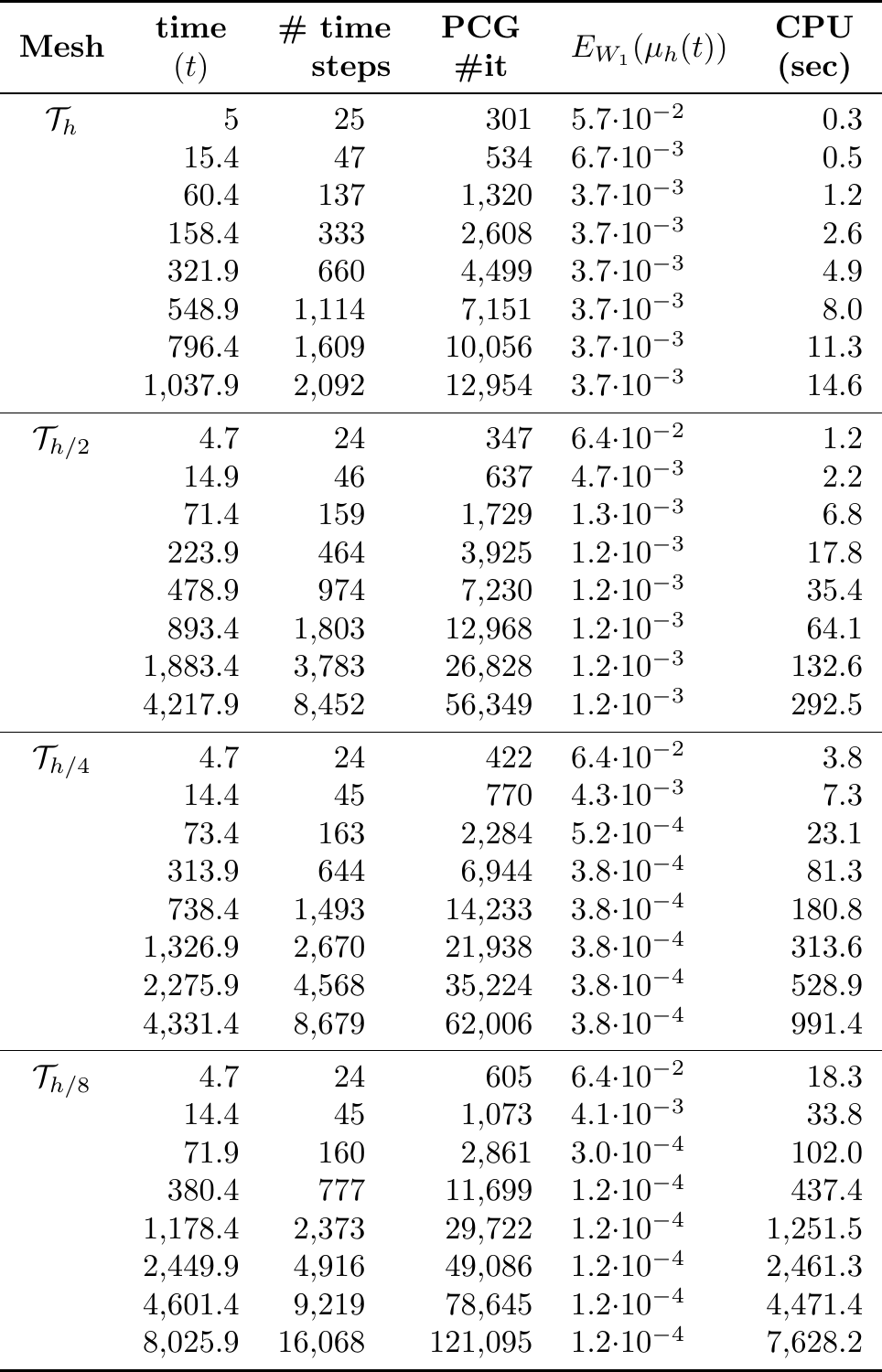}
  }
  \end{minipage} 
  \caption{Test~Case~2: numerical statistics for
    $\PC{1,\MeshPar/2}-\PC{0,\MeshPar}$ on Mesh 2 ($\SuppF$ aligned).
    Top left panel: time behavior of the \LCF\ $\SLyap(\TdensH(t))$
    for three the initial data $\Tdens_0$
    in~\cref{eq:initial-conditions}.  Bottom left panel: time behavior
    of $\ErrWass(\TdensH(t))$ for four refinement levels starting with
    $\Tdens_0=1$.  Right panel: table with simulation statistics,
    including \# of time steps, time $(t)$, number of PCG iterations,
    $\ErrWass(t)$, and CPU time (in seconds). Corresondingly,
    $\Var(\TdensH(t))$ varies in the range $10^{-1}$ and $10^{-8}$.  }
  \label{fig:s_lyap}
\end{figure}

In this paragraph we report numerical evidence of the statements
in~\cref{prop-lie-der,prop-min-lyap}. We also include a discussion on
computational cost to show the effectiveness of the proposed approach,
although the employed numerical techniques are not optimized.  In
fact, a number of cost-saving strategies can be envisaged, including
using coarse-mesh solutions to extrapolate initial guesses of
$\Tdens_0$, re-use of stiffness and preconditioning matrices, use of a
Newton-Raphson strategy to improve stability and allow for larger
time-step sizes together with an inexact Krylov linear solver, etc.
On the other hand, in this work we are interested in showing that,
although far from optimal, our approach is potentially very effective
and competitive with literature approaches in the solution of the
Monge-Kantorovich equations.

\Cref{fig:s_lyap} (top-left panel) reports the time behavior of
$\SLyap(\TdensH(t))$ for the different initial conditions described
in~\cref{eq:initial-conditions} using the finest mesh of set 2
of the $\PC{1,\MeshPar/2}-\PC{0,\MeshPar}$ method.
The results for other methods and mesh sets are practically
indistinguishable, and are not reported here.  We see that $\SLyap$
decreases monotonically and always attains the same minimum value in
time independently of the initial conditions. After $t\approx100$ the
value of $\SLyap(\TdensH(t))$ becomes approximately stationary, up to
machine precision.

The differences among mesh levels emerge after scaling the
value of $\SLyap(\TdensH(t))$ with its assumed asymptotic
value. According to \cref{prop-min-lyap} this value is the minimum of
the $\SLyap$ and is equal to the $\Wass{1}$-distance between $\Source$
and $\Sink$. For Test~Case~2 this value is given by $0.125$, (equal to
the integral of the \OTD), allowing us to compute the relative error as
\begin{equation*}
  \ErrWass(\Tdens):=\frac{\SLyap(\Tdens)-0.125}{0.125}.
\end{equation*}
The bottom left panel in~\cref{fig:s_lyap} reports the time-evolution
of $\ErrWass(\TdensH(t))$ for four mesh refinements. Similarly to the
behavior of $\Var(\TdensH(t)$ in~\cref{fig:fcost_p1p0},
$\ErrWass(\TdensH(t))$ shares the same profile for all mesh levels
until $t\approx 10$. At this time the graphs start separating and
converge to their corresponding asymptotic values that scale
approximately linearly with $\MeshPar$.
This implies that the
stop-tolerance $\TolTime$ used to identify steady state can be relaxed
depending on the sought accuracy.  In fact, two distinct phases can be
identified. The first initial phase displays profiles of
$\Var(\TdensH^{\tstep})$, $\ErrTdens(\TdensH^{\tstep})$, and
$\ErrWass(\TdensH(t))$ that are superimposed and independent of the
mesh level.  This phase is characterized by strong variations of
$\TdensH$, and consequently, by higher number of PCG iterations.
After this initial phase, $\TdensH$ varies more slowly and stabilizes
within $\SuppOT$ to its final value which depends upon the actual mesh
size. At the same time, in $\Omega\setminus\SuppOT$, the decay
continues towards zero.  This phase is characterized by larger
time-step sizes and faster PCG convergence, but much slower
convergence of $\TdensH$ to its asymptotic value, so that only
marginal accuracy gains require large computational efforts.  The use
of increasingly refined meshes should be able to exploit the iterative
process of the \DMK\ approach with consistent reduction of the
computational cost.

These results suggest that the proposed approach can be very efficient
in evaluating $\Wass{1}$ distances.  This statement is corroborated by
the computational statistics collected in the table shown in the right
panel of~\cref{fig:s_lyap}.  For the four mesh levels, we show
simulation time ($t$), cumulated number of time steps ($\# it$),
number of linear (PCG) iterations, the value of
$\ErrWass(\TdensH(t))$, and CPU time in seconds.  The data reported
are collected during the simulation at each change in order of
magnitude of $\Var(\TdensH(t))$ in the range $10^{-1}$ to
$10^{-8}$. The runs are conducted on a 3.4GHz Intel-I7 (1-core)
computer.  The table shows that $\MeshPar$ determines the practical
bound of achievable accuracy in the evaluation of the $\Wass{1}$
distance. For example, looking at the results for
$\Triang{\MeshPar/4}$, it evidently useless go beyond $t=405$, at
which time the accuracy in the $\Wass{1}$ distance is already
$4.6\cdot 10^{-5}$, not far from the highest achievable error accuracy
of $4\cdot 10^{-5}$. Note that with $\Triang{\MeshPar}$ at $t=60.4$
which achieve an accuracy of $3.7\cdot 10^{-3}$ with a mere 1.2
seconds of CPU time. Most of this error is probably due to the
geometric error of having a mesh not aligned with the support of the
\OTD.

\subsection{Test~Case~2: comparison with literature 
  and stability of the spatial discretization}
\label{sec:confr_prigozhin}

\begin{figure}
  \centerline
  {
    \includegraphics[width=0.45\textwidth]{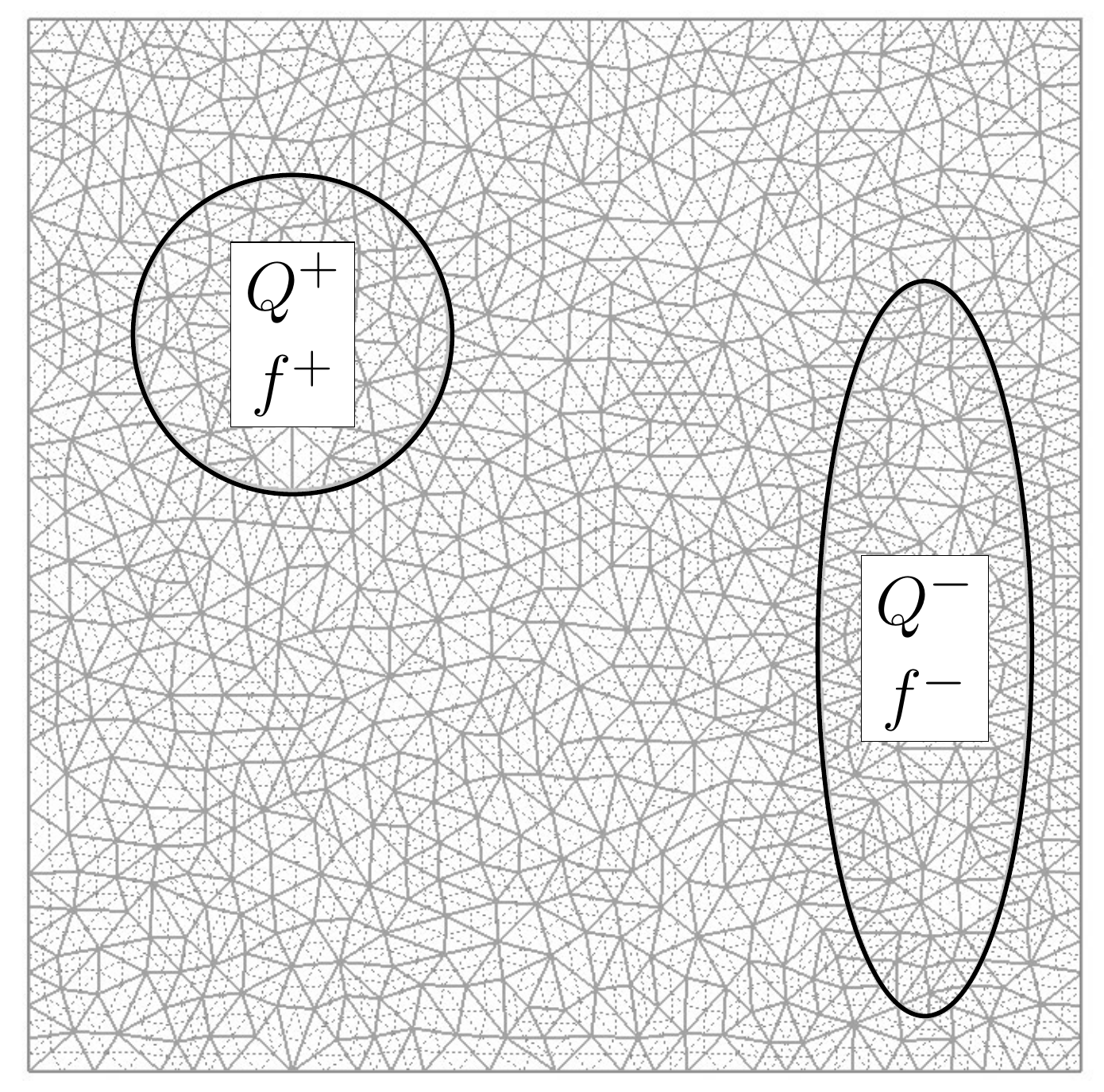}
    \includegraphics[width=0.53\textwidth]{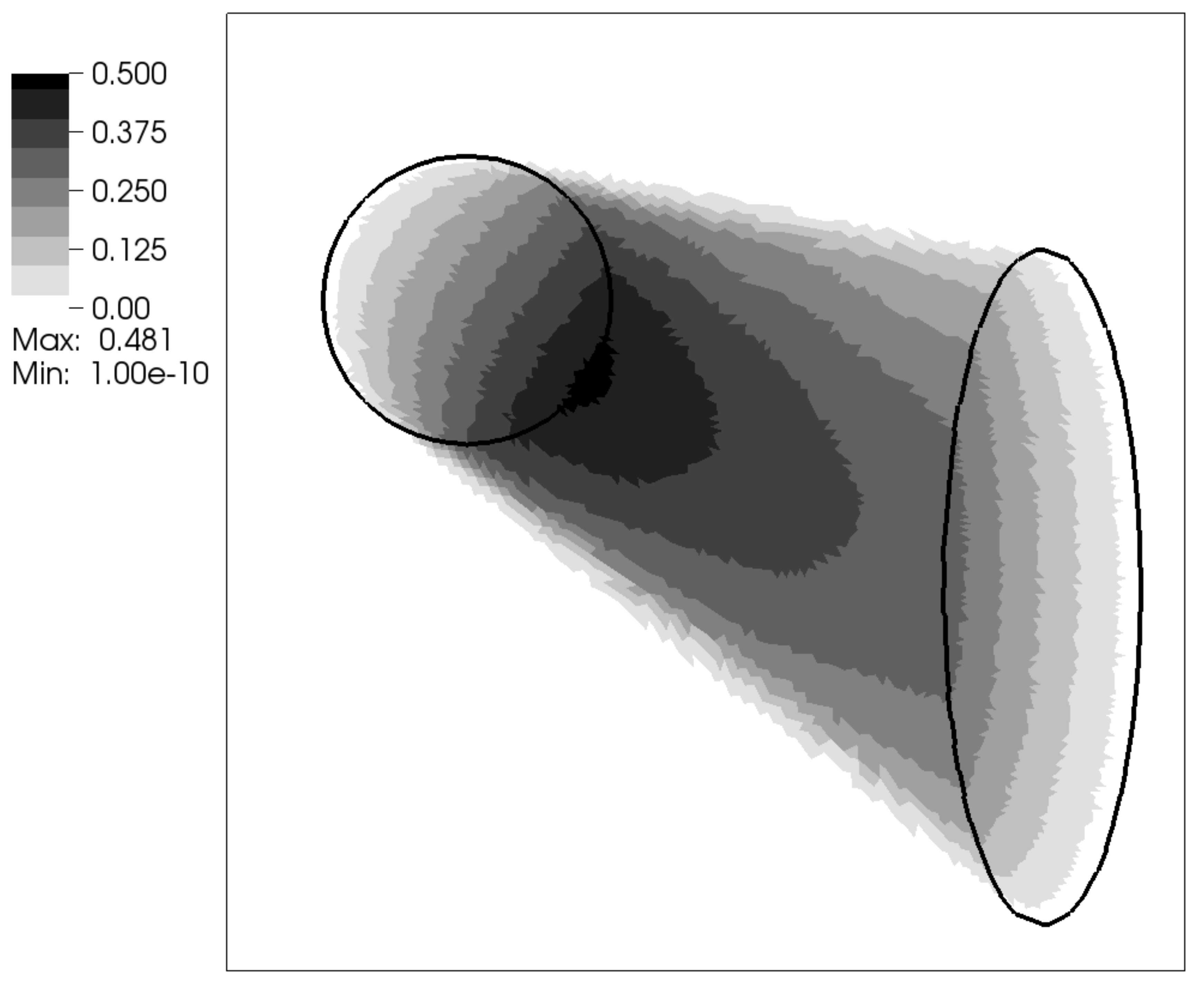}
  }
  \caption{Domain and supports of the forcing function used in the
    discretization of the \MKEQS~\cref{eq:MK-problem} for the
    solution of Example 1 of~\citet{prigozhin}.  The two triangulations
    $\Triang[\MeshPar]$ and $\Triang[\MeshPar/2]$ are shown with
    blue and dashed black lines, respectively.}
  \label{fig:prig-domain}
\end{figure}

In this section we address a test case proposed by~\citet{prigozhin}.
The problem considers the transport of a uniform density supported on
a circle towards a disjoint ellipsis.  \Cref{fig:prig-domain} (left)
shows the domain $\Domain$ where problem~\cref{eq:MK-problem} is
defined and the supports $\SuppPlus$ and $\SuppMinus$ of the forcing
term $\Fsource=\Fsource^+-\Fsource^-$, with $\Fsource^+(x)=2$ for
$x\in\SuppPlus$ and zero otherwise, and $\Fsource^-(y)$, appropriately
rescaled for $y\in\SuppMinus$ to ensure mass balance.  The coarse
initial mesh is also shown in light blue lines, and its uniform
refinement is shown in thin dashed lines. This mesh, characterized by
820 nodes and 1531 triangles, is a constrained Delaunay triangulation
that follows the boundaries of both $\SuppPlus$ and $\SuppMinus$. The
same Figure shows in the right panel the time-converged spatial
distribution of the transport density numerically evaluated with the
most stable discretization method,
$\PC{1,\MeshPar/2}-\PC{0,\MeshPar}$, on the finest mesh. The spatial
distribution of $\TdensH$ is in good agreement with the results
obtained by~\citet{prigozhin}, achieving its maximum value (0.482) on
the boundary of the left circle, and its minimum value $10^{-10}$ set
by the prescribed lower bound as discussed in \cref{sec:pcg-solution}.
This solution is used in the following digression as a reference
solution.  We would like to note that a similar test case was already
proposed in~\citet{Facca-et-al:2018} to to test the conjecture that
the solution of the dynamic MK problem \cref{eq:MK-problem} converges
at infinite time towards the solution of the static \MKEQS.  In this
section we re-use this example to experimentally discuss the need to
use different FEM spaces for the discretization of the transport
density and of the transport potential.

\begin{figure}
  \centerline{
    {\includegraphics[width=0.3\textwidth]{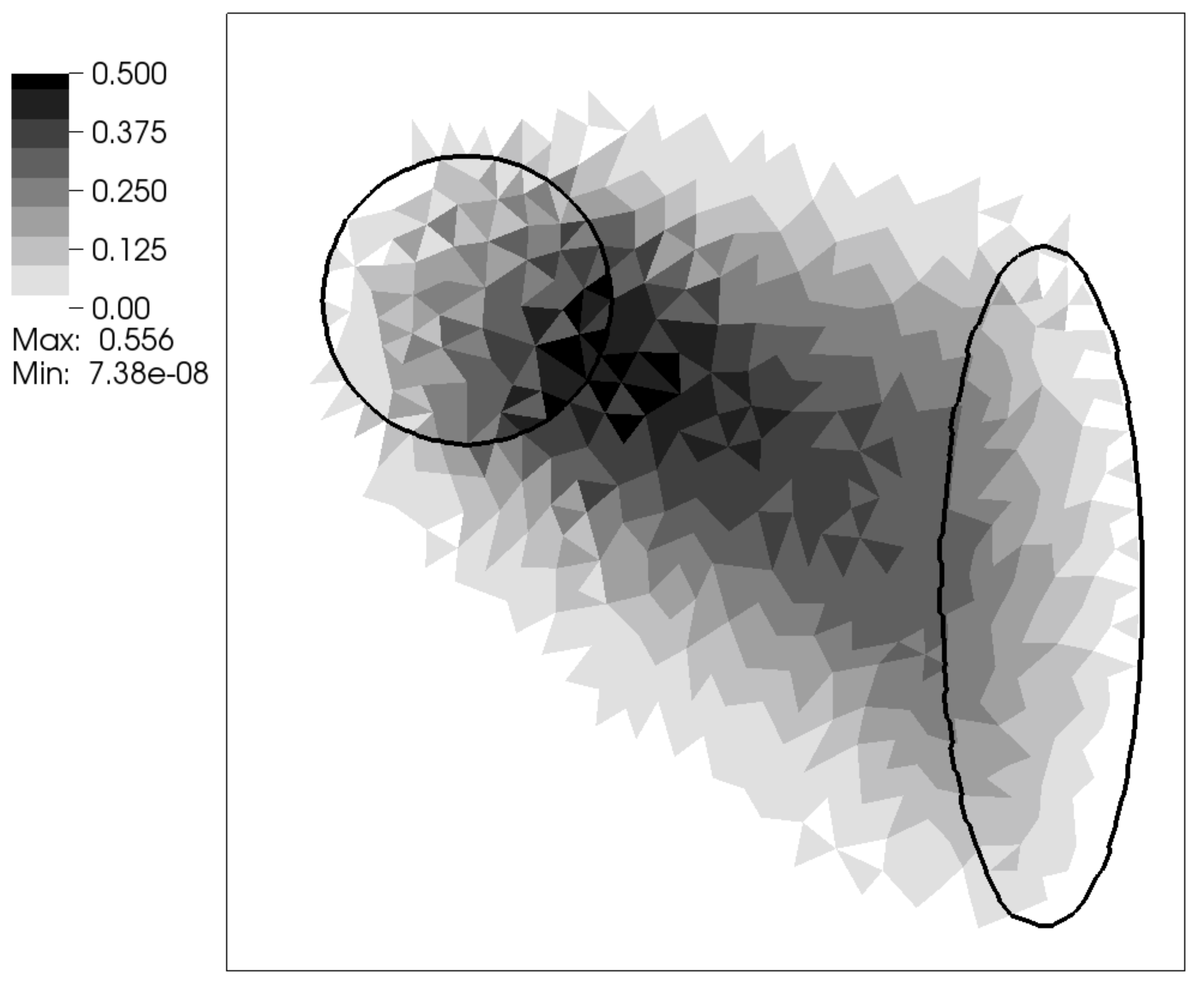}}
    \quad
    {\includegraphics[width=0.3\textwidth]{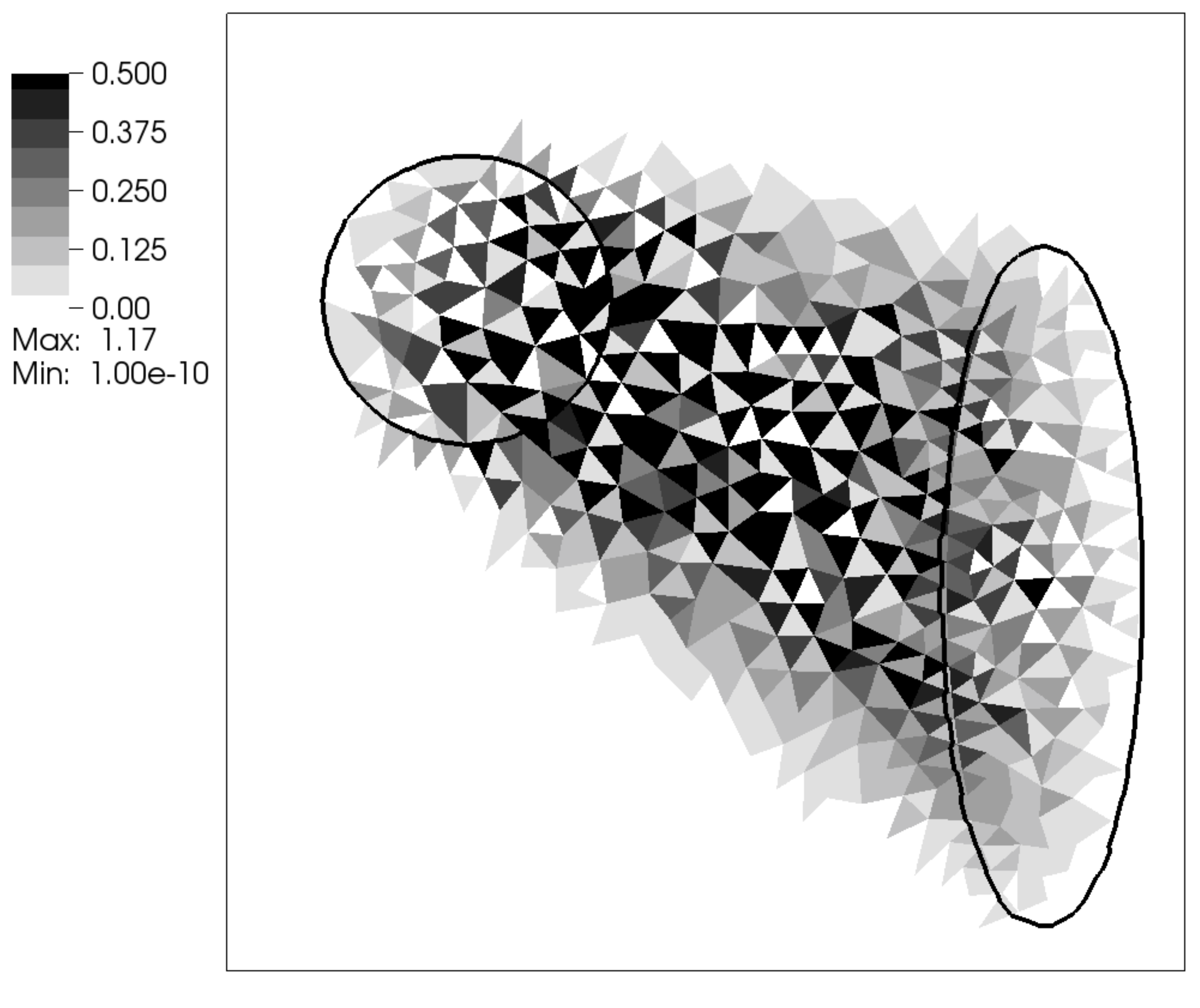}}
    \quad
    {\includegraphics[width=0.3\textwidth]{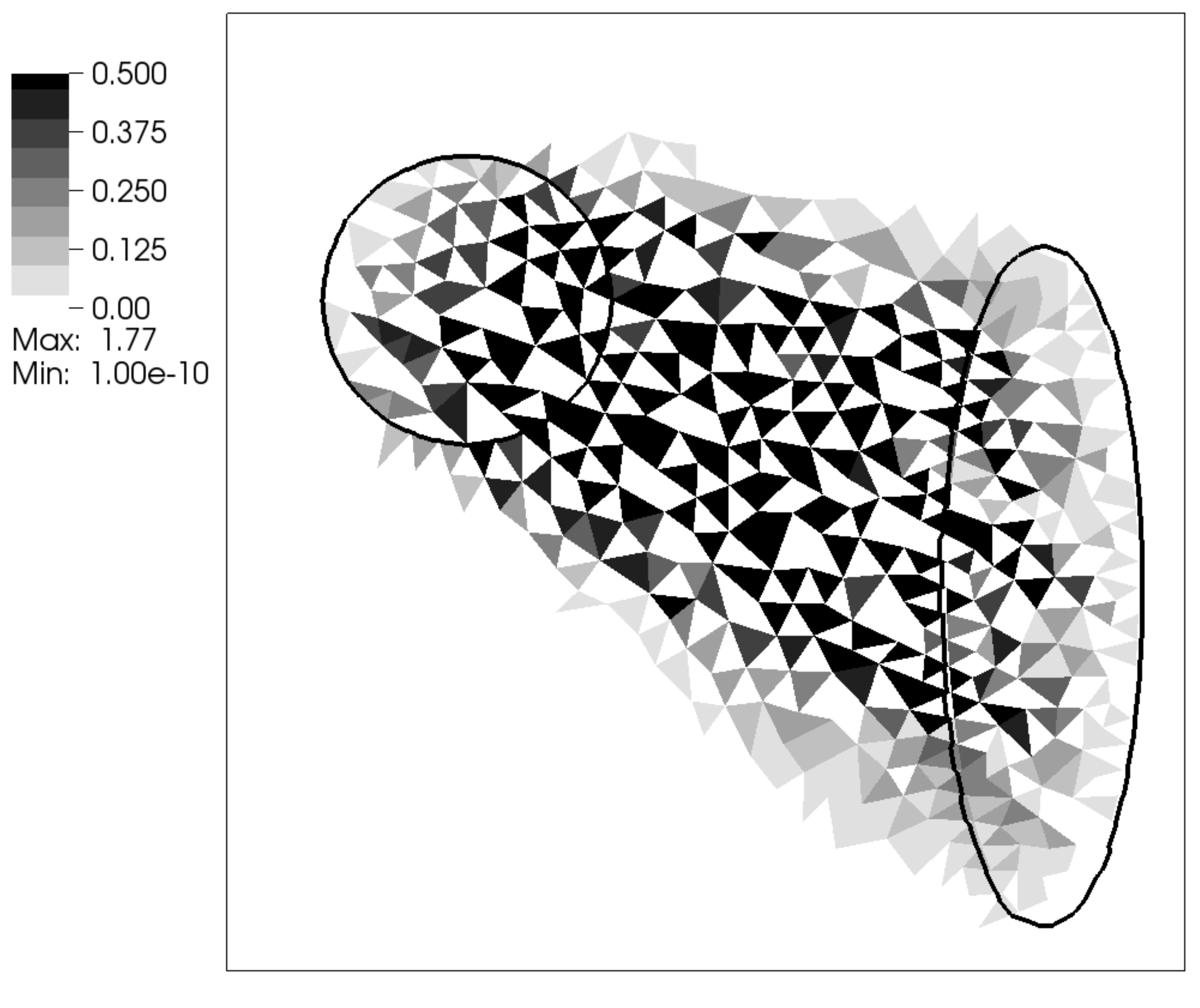}}
  }
  \centerline{
    {\includegraphics[width=0.3\textwidth]{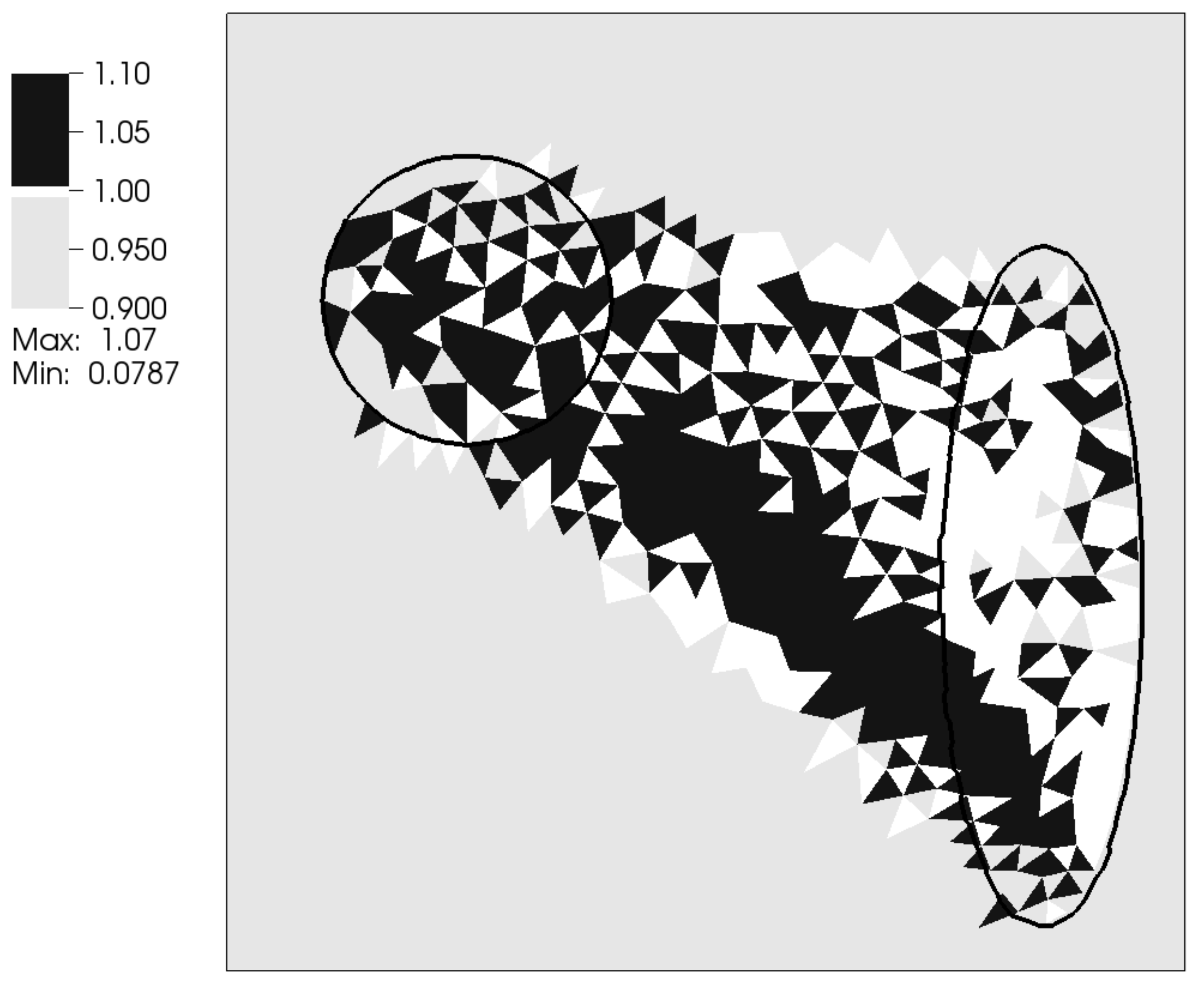}}
    \quad
    {\includegraphics[width=0.3\textwidth]{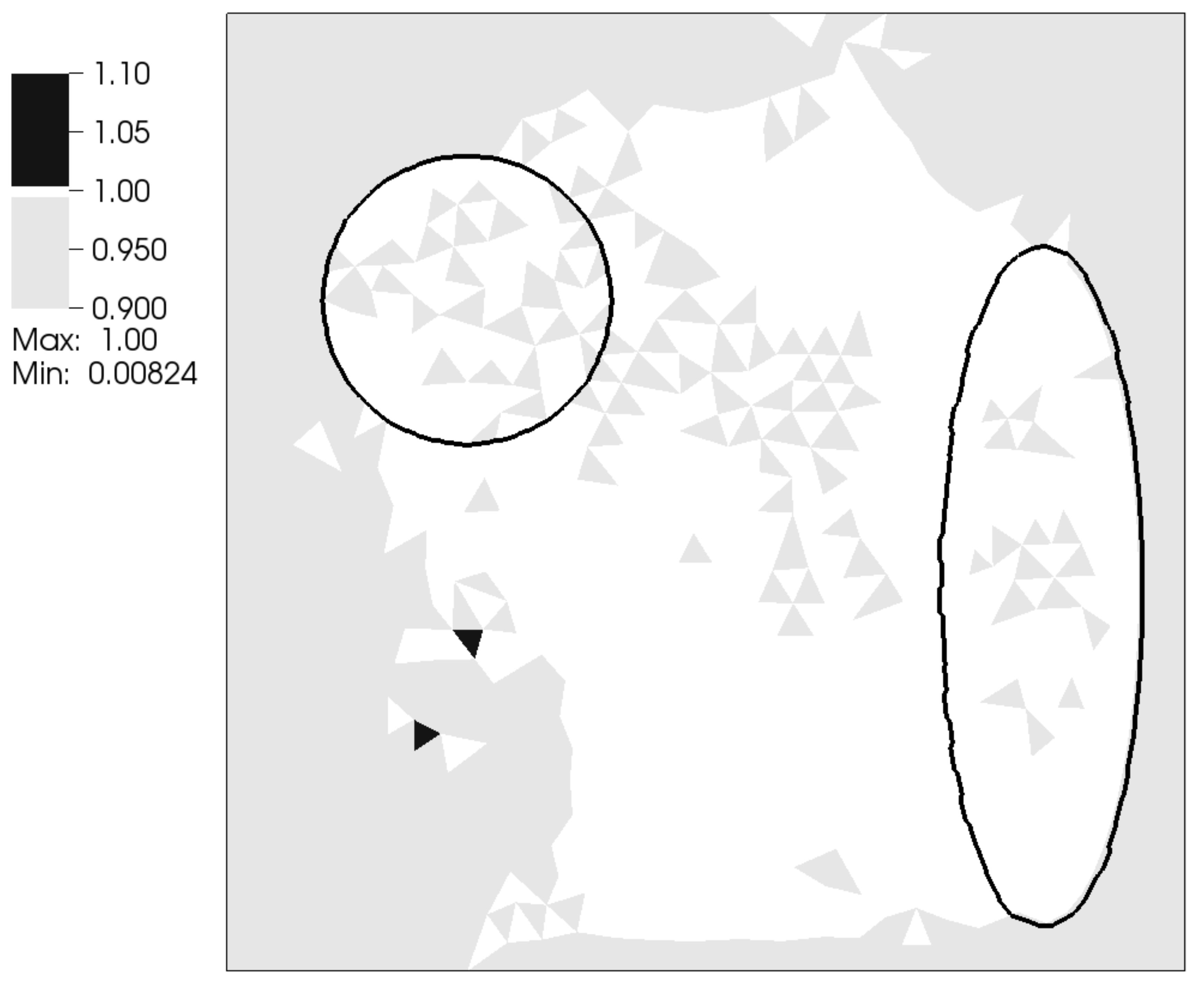}}
    \quad
    {\includegraphics[width=0.3\textwidth]{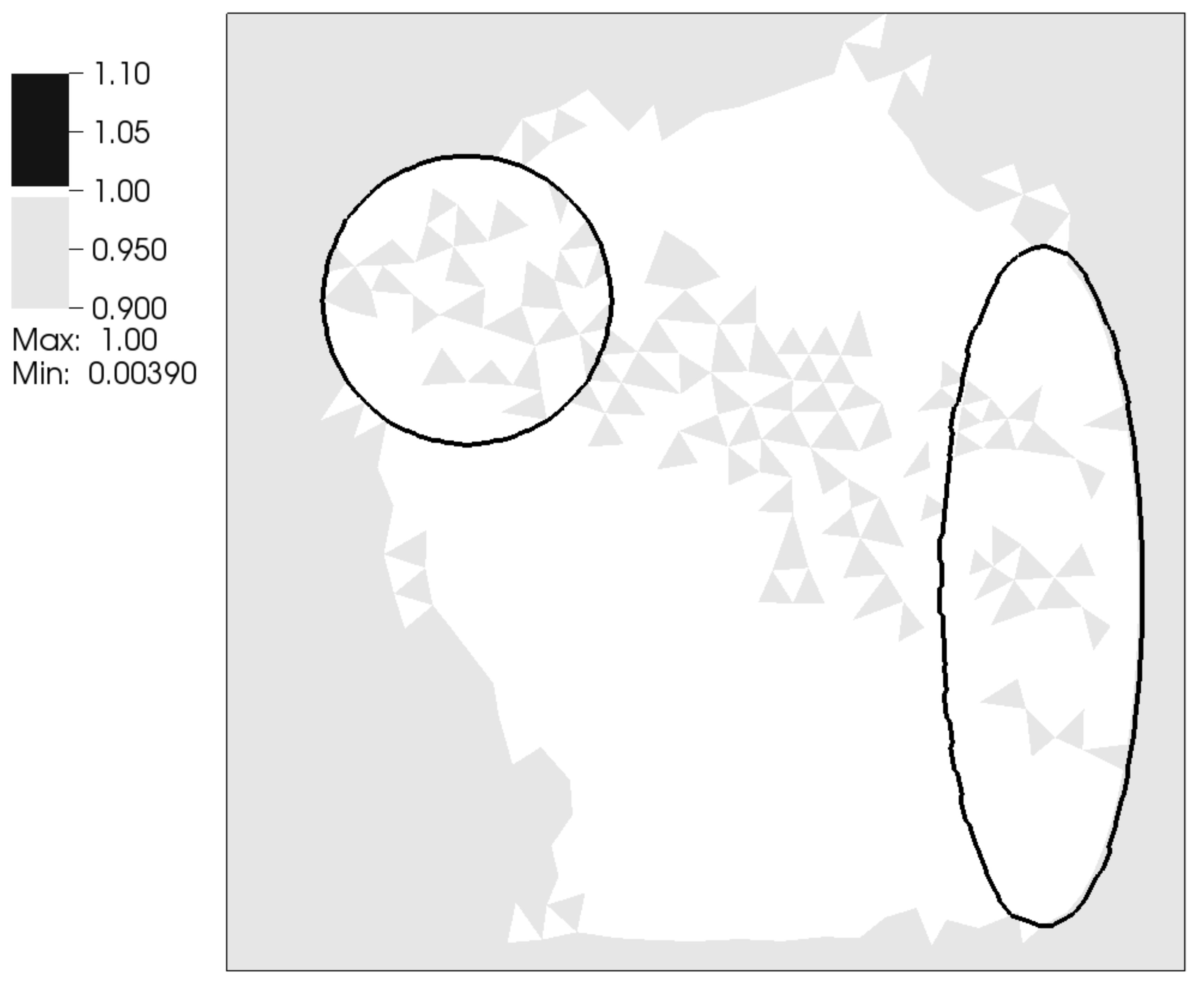}}
  }
  \caption{ Solution of Test~Case~2 at $t_1=2.73\times 10^{2}$ (left),
    $t_2=1.36\times 10^{3} $(center) , and
    $t_3=2.5\times 10^{5}$(right), using the
    $\PC{1,\MeshPar}-\PC{0,\MeshPar}$ approach. Top row: spatial
    distribution of $\TdensH\in\PC{0,\MeshPar}$. Bottom row: spatial
    distribution of $|\Grad\PotH|\in\PC{0,\MeshPar}$ as calculated
    from $\PotH\in\PC{1,\MeshPar}$.}
  \label{fig:prig_p0}
\end{figure}

We start this discussion by presenting the results obtained using the
$\PC{1,\MeshPar}-\PC{0,\MeshPar}$ approach on the coarsest grid and
look at three different times during the evolution. The times are
selected so that $\Var(\TdensH^{\tstep})$ reaches the values
$10^{-3}$, $10^{-4}$, $5\times 10^{-8}$, namely
$t_1=2.73\times 10^{2}$, $t_2=1.36\times 10^{3} $, and
$t_3=2.5\times 10^{5}$, the latter time corresponding to the
time-converged solution.  We plot in \cref{fig:prig_p0} both $\TdensH$
(upper panels) and $|\Grad\PotH|$ (lower panels).

At the first sampled time the solution clearly resembles the reference
solution shown in \cref{fig:prig-domain} (right), although at a much
coarser resolution. The corresponding gradient (shown in the second
row) displays some slight but acceptable overshoots in a region that
resembles $\SuppOT$. Already at this early time, which occurs after
1630 time steps, some oscillations are visible.  At time $t_2$ these
oscillations are much more pronounced with a checkerboard pattern that
suggests an intrinsic instability of the scheme. We should note that
the color scale in the plots are limited from above and from below by
suitable values to emphasize the oscillations.  The maximum and
minimum values for both $\TdensH$ and $|\Grad\PotH|$ are reported
right below each legend.  We observe that there are no overshoots in
$|\Grad\PotH|$, which at the final time is never greater than
one. Still, checkerboard-like fluctuations are visible, causing the
dynamic equation to drive $\TdensH$ to zero quickly thus determining a
drastic deterioration of the solution accuracy.

\begin{figure}
  \centerline{
    {\includegraphics[width=0.3\textwidth]{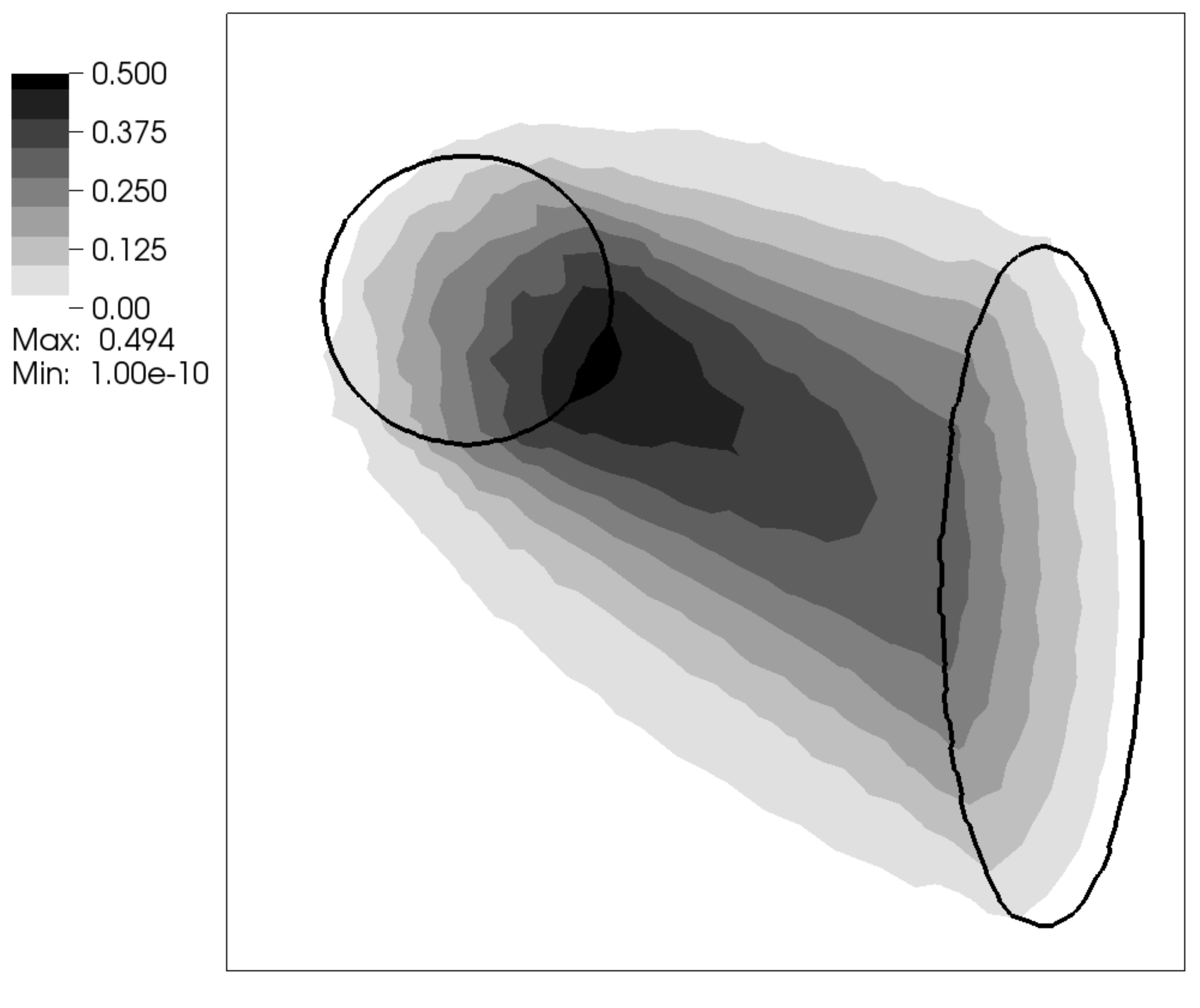}}
    \quad
    {\includegraphics[width=0.3\textwidth]{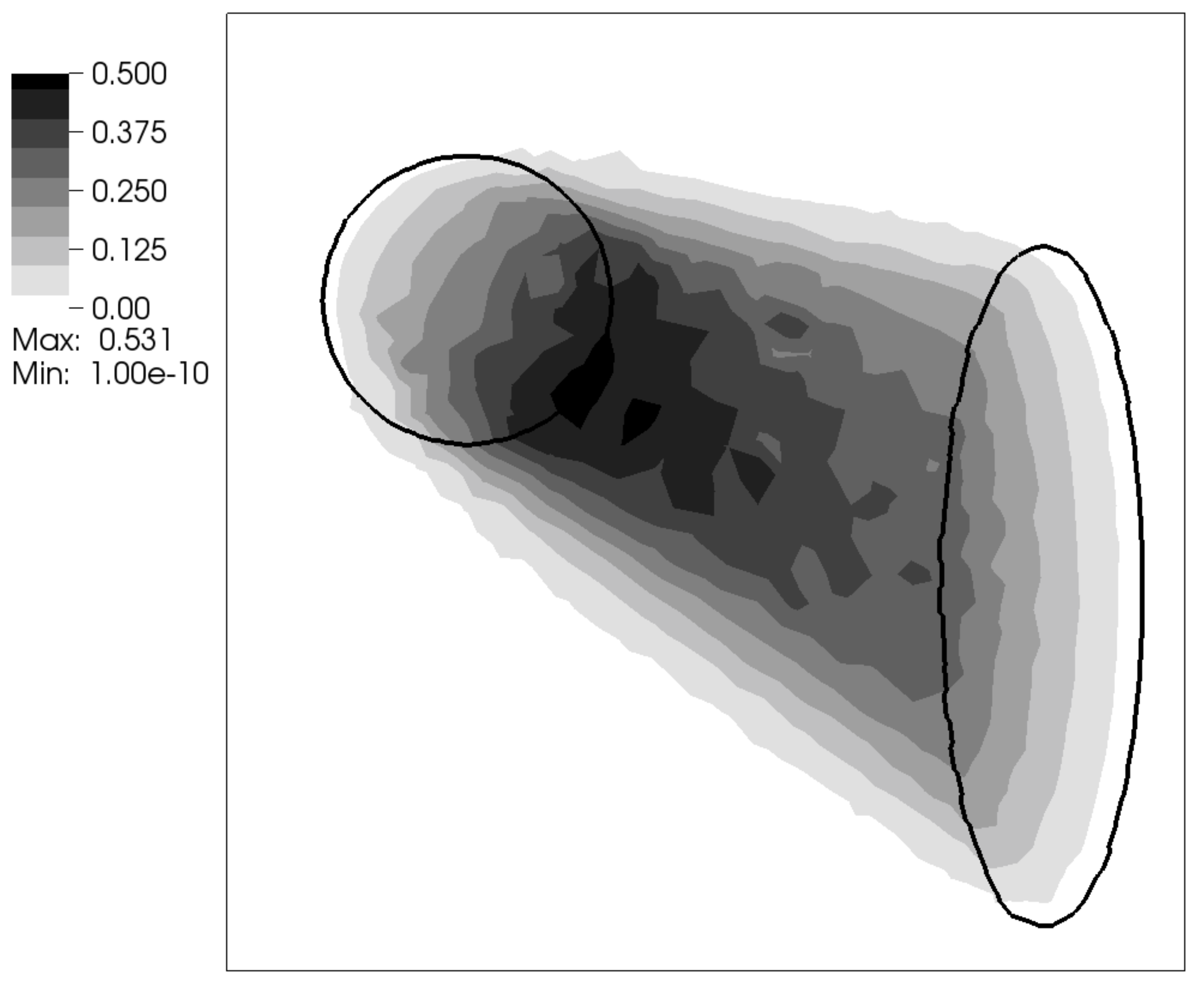}}
    \quad
    {\includegraphics[width=0.3\textwidth]{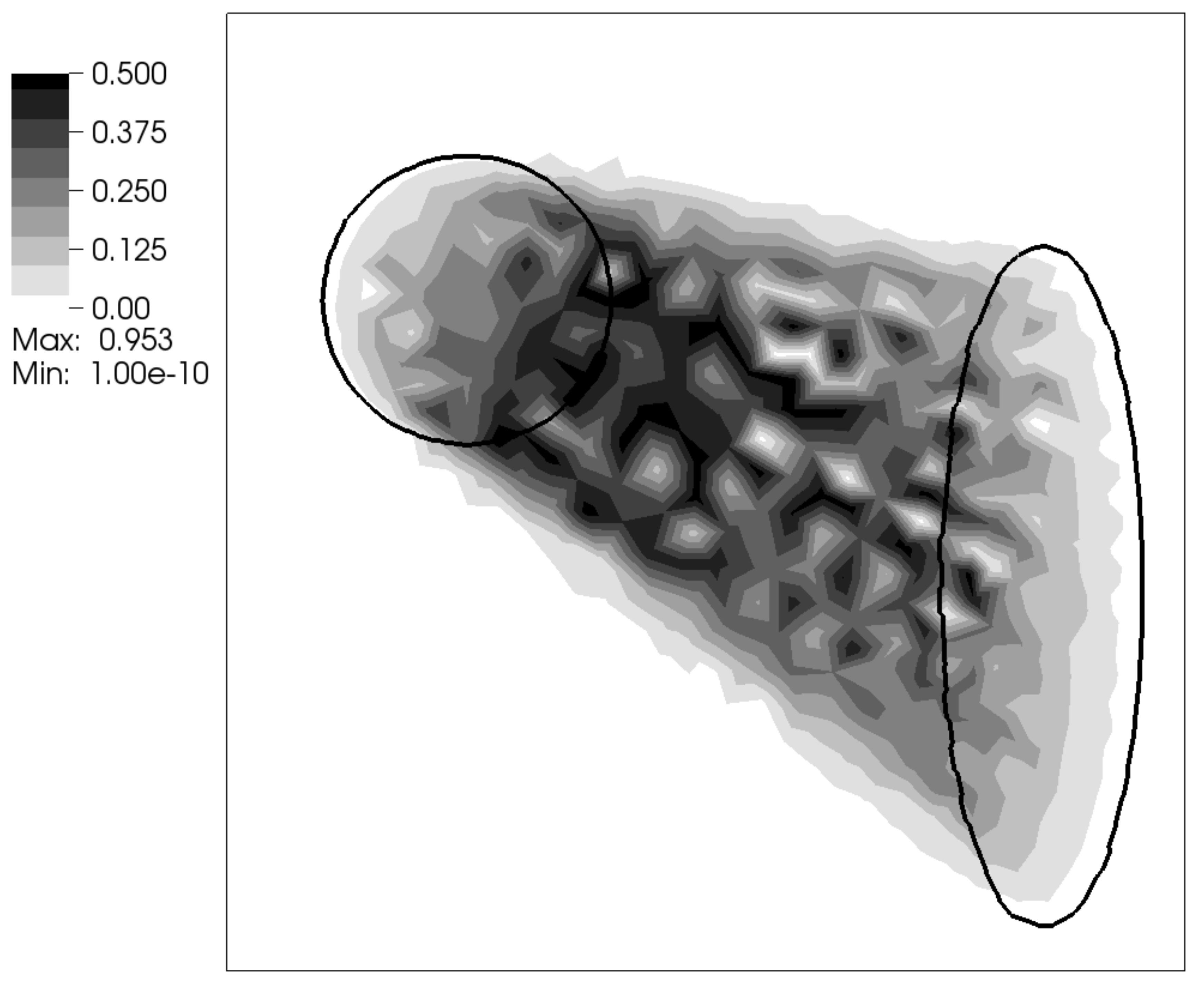}}
  }
  \centerline{
    {\includegraphics[width=0.3\textwidth]{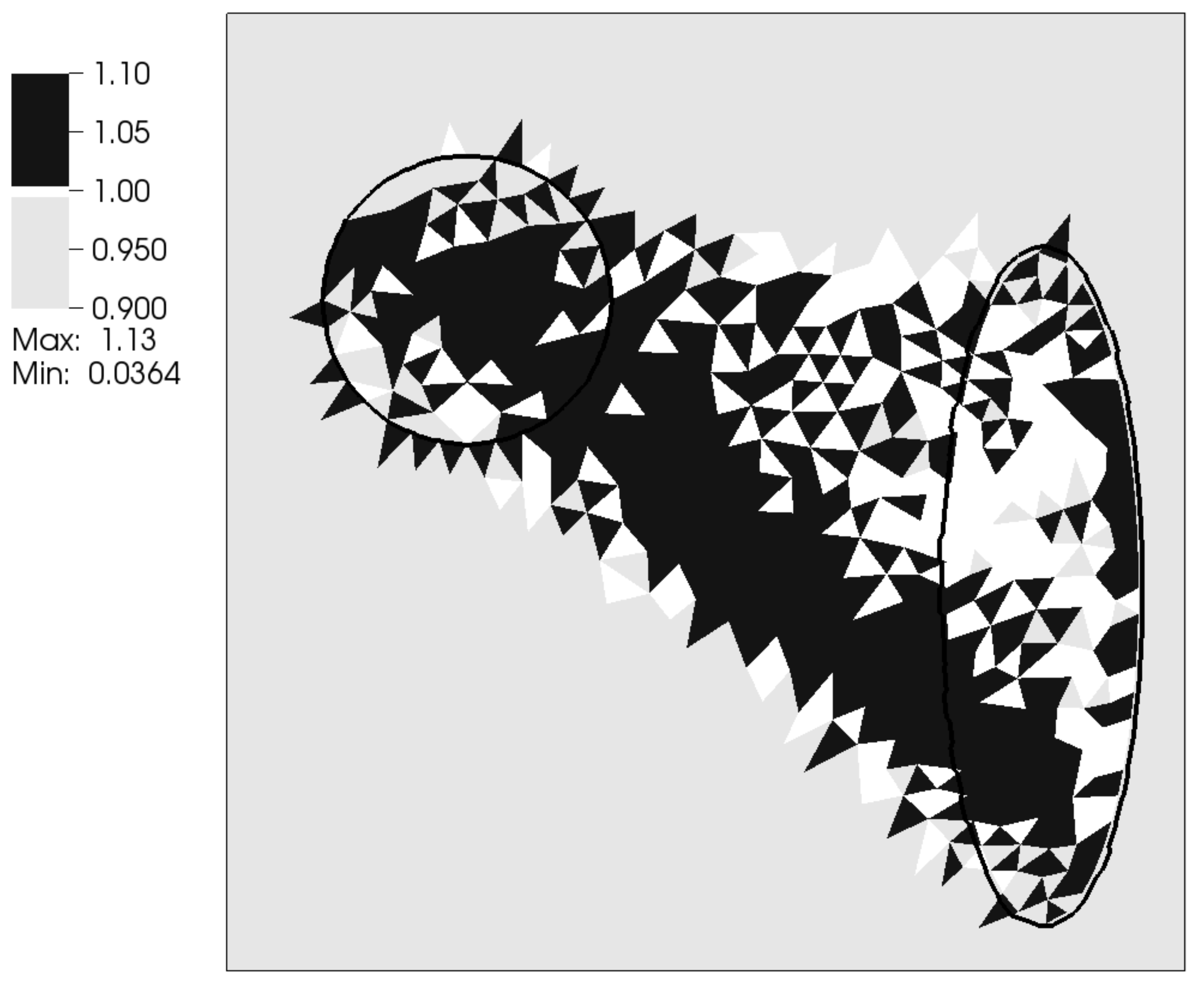}}
    \quad
    {\includegraphics[width=0.3\textwidth]{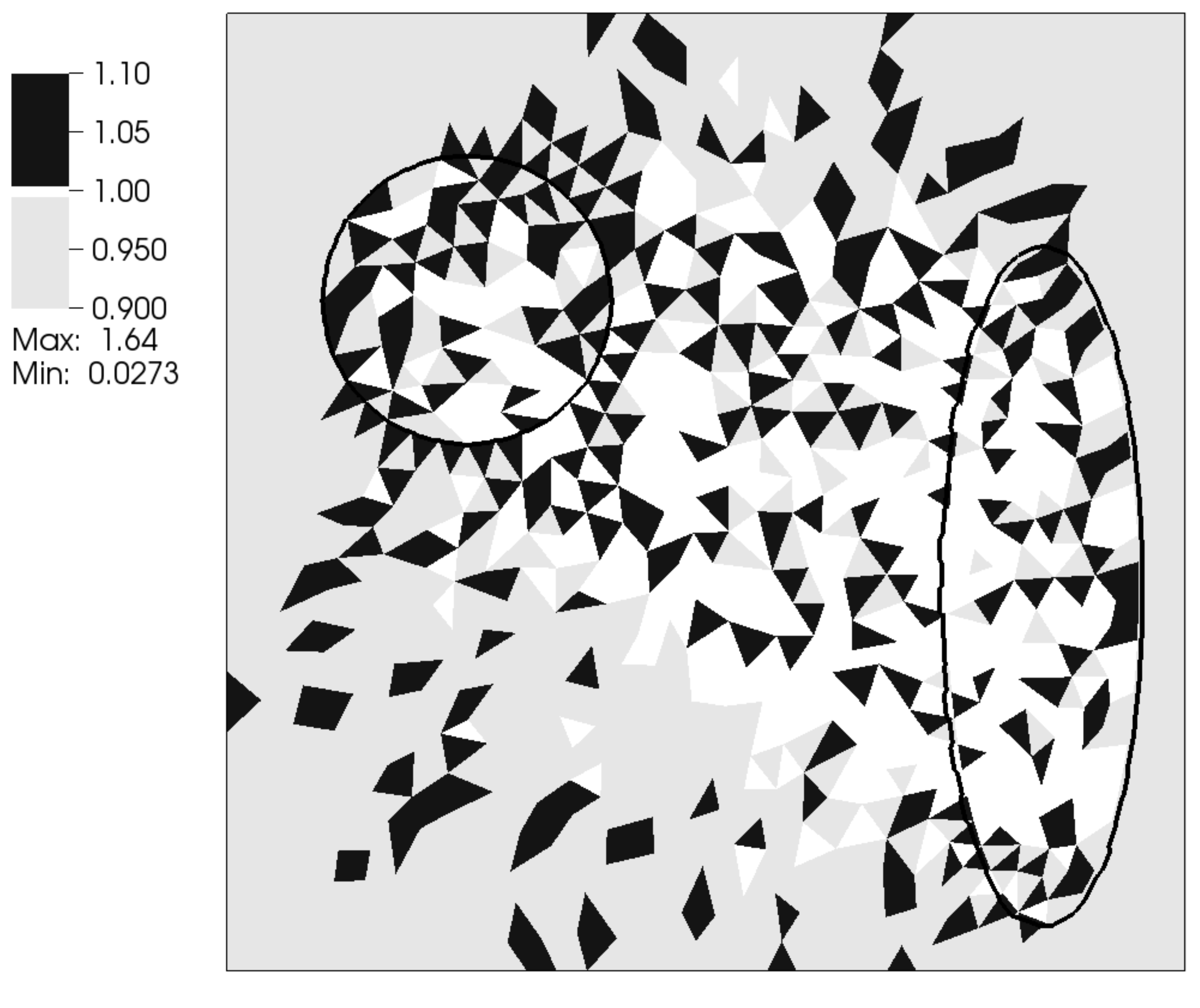}}
    \quad
    {\includegraphics[width=0.3\textwidth]{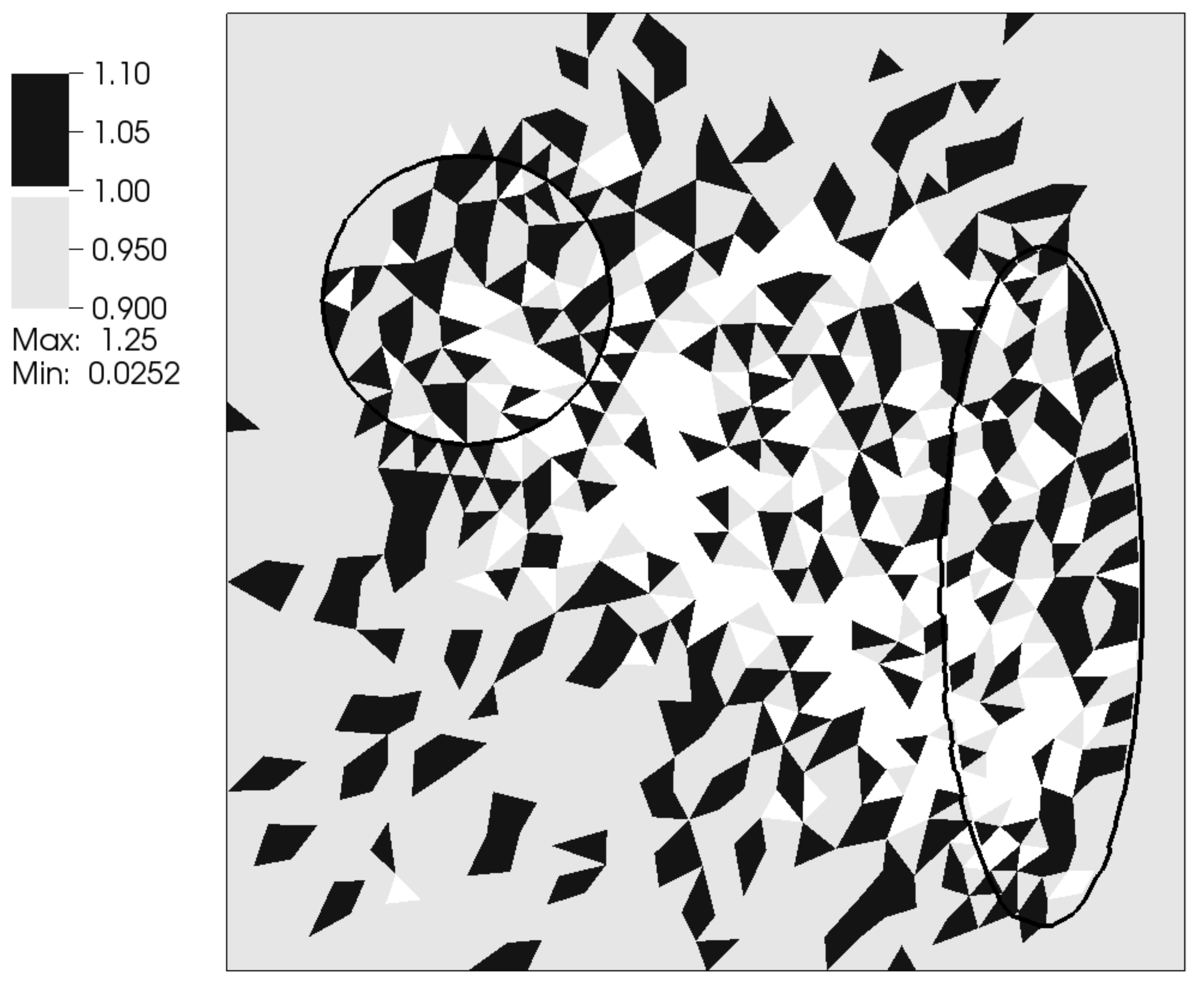}}
  }
  \caption{ Solution of Test~Case~2 at $t_1=6.82\time 10^1$ (left),
    $t_2=5.36\time 10^2$ (center), and $t_3=9.65time 10^3$ (left),
    using the $\PC{1,\MeshPar}-\PC{1,\MeshPar}$ approach.  Top row:
    spatial distribution of $\TdensH\in\PC{1,\MeshPar}$. Bottom row:
    spatial distribution of $|\Grad\PotH|\in\PC{0,\MeshPar}$
    calculated from $\PotH\in\PC{1,\MeshPar}$.}
  \label{fig:prig_p1}
\end{figure}

The situation does not improve by using higher order spaces for
$\TdensH$. \Cref{fig:prig_p1} shows the results obtained by using a
$\PC{1,\MeshPar}-\PC{1,\MeshPar}$ approach. We still observe
oscillations, albeit appearing at a later time and with a different
pattern.  Once oscillations in $|\Grad\PotH|$ around the unit value
start developing, the dynamic equation determines a decay of $\TdensH$
within the elements where $|\Grad\PotH|<1$ even if located within
$\SuppOT$.  This decay quickly reinforces in time leading to the
observed checkerboard pattern.  The behavior resembles the classical
lack of stability due to a violation of an inf-sup-like constraint,
but at this point we are not able to clearly identify this condition.

\begin{figure}
  \centerline{
    {\includegraphics[width=0.3\textwidth]{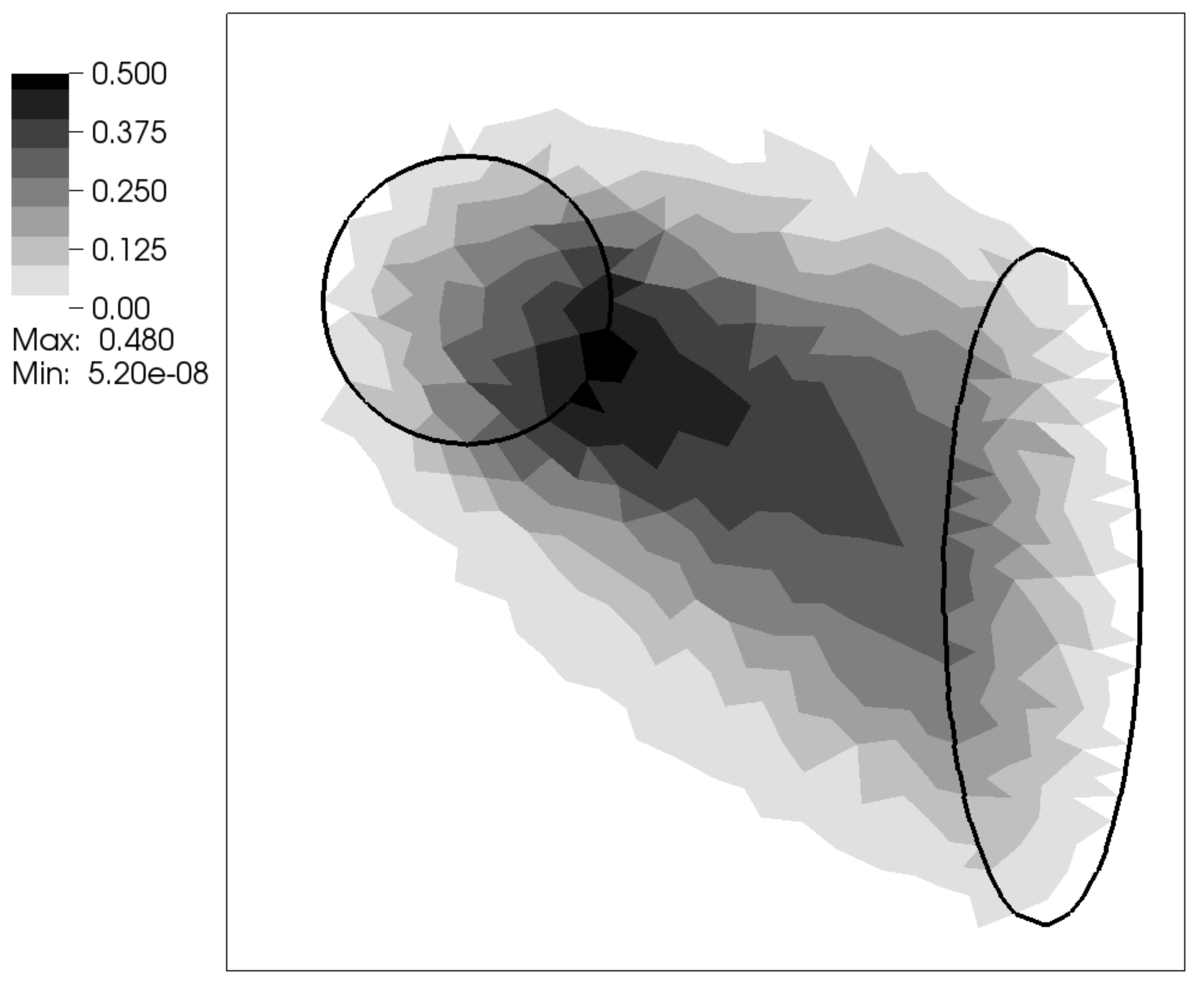}}
    \quad
    {\includegraphics[width=0.3\textwidth]{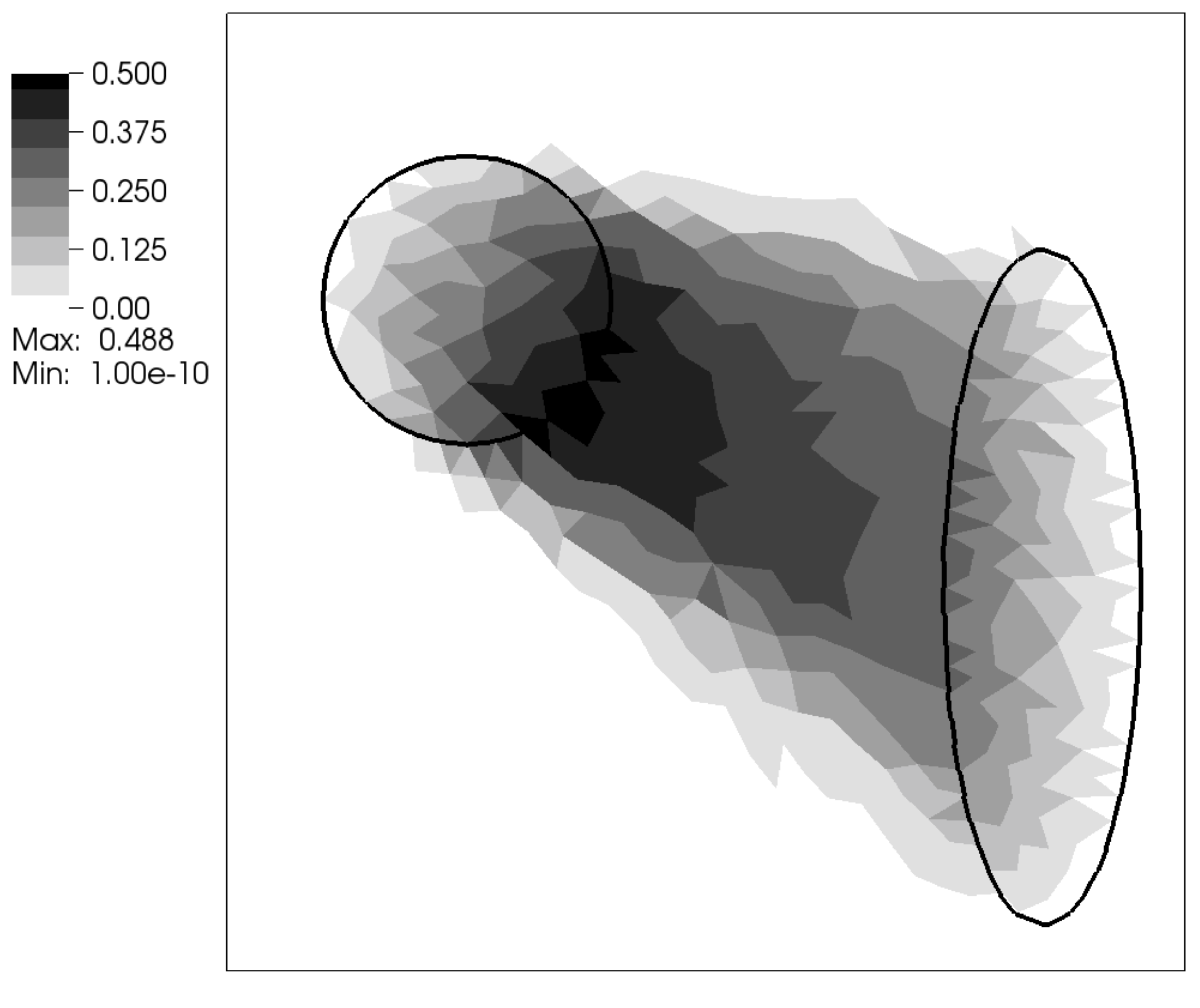}}
    \quad
    {\includegraphics[width=0.3\textwidth]{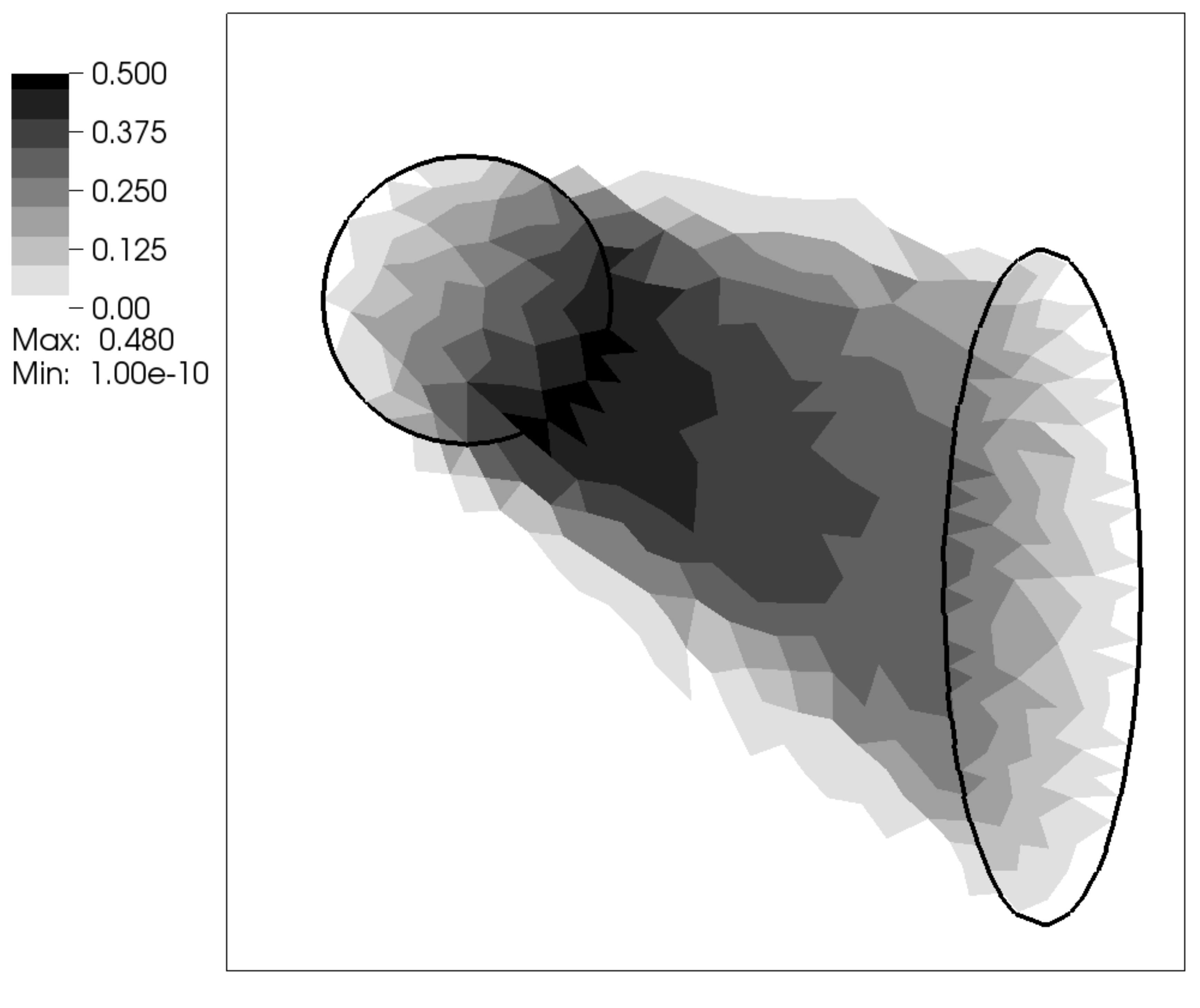}}
  }
  \centerline{
    {\includegraphics[width=0.3\textwidth]{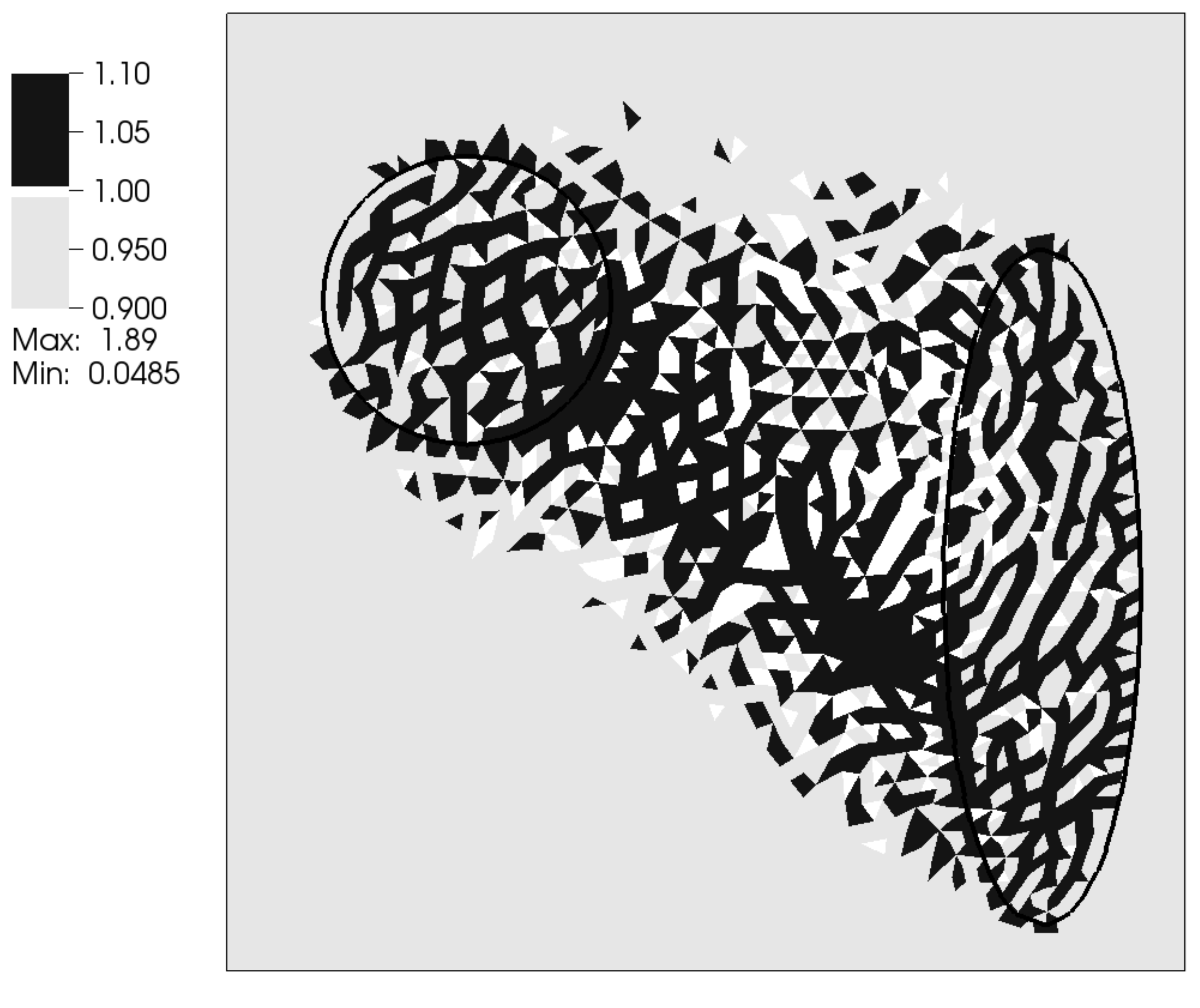}}
    \quad
    {\includegraphics[width=0.3\textwidth]{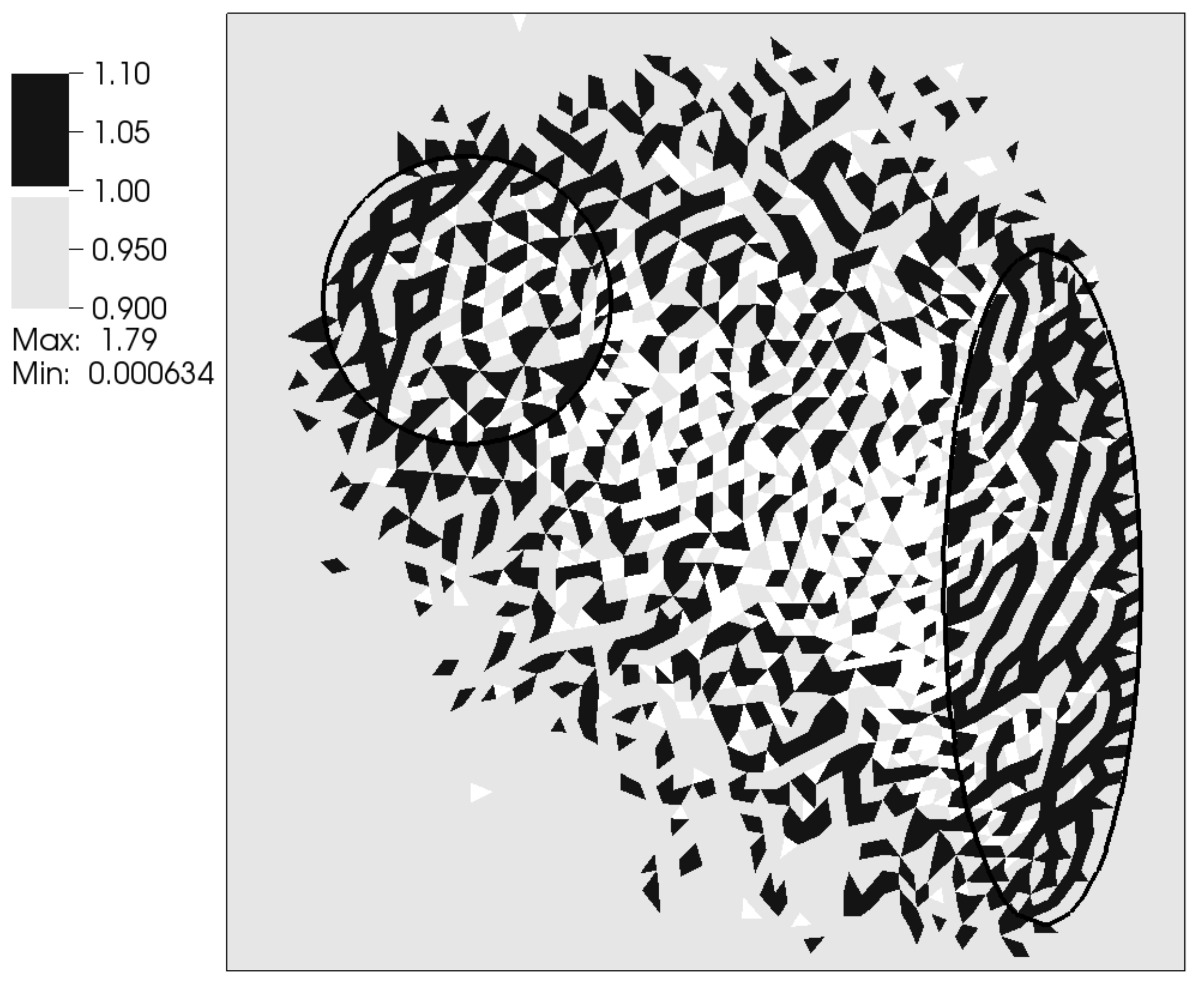}}
    \quad
    {\includegraphics[width=0.3\textwidth]{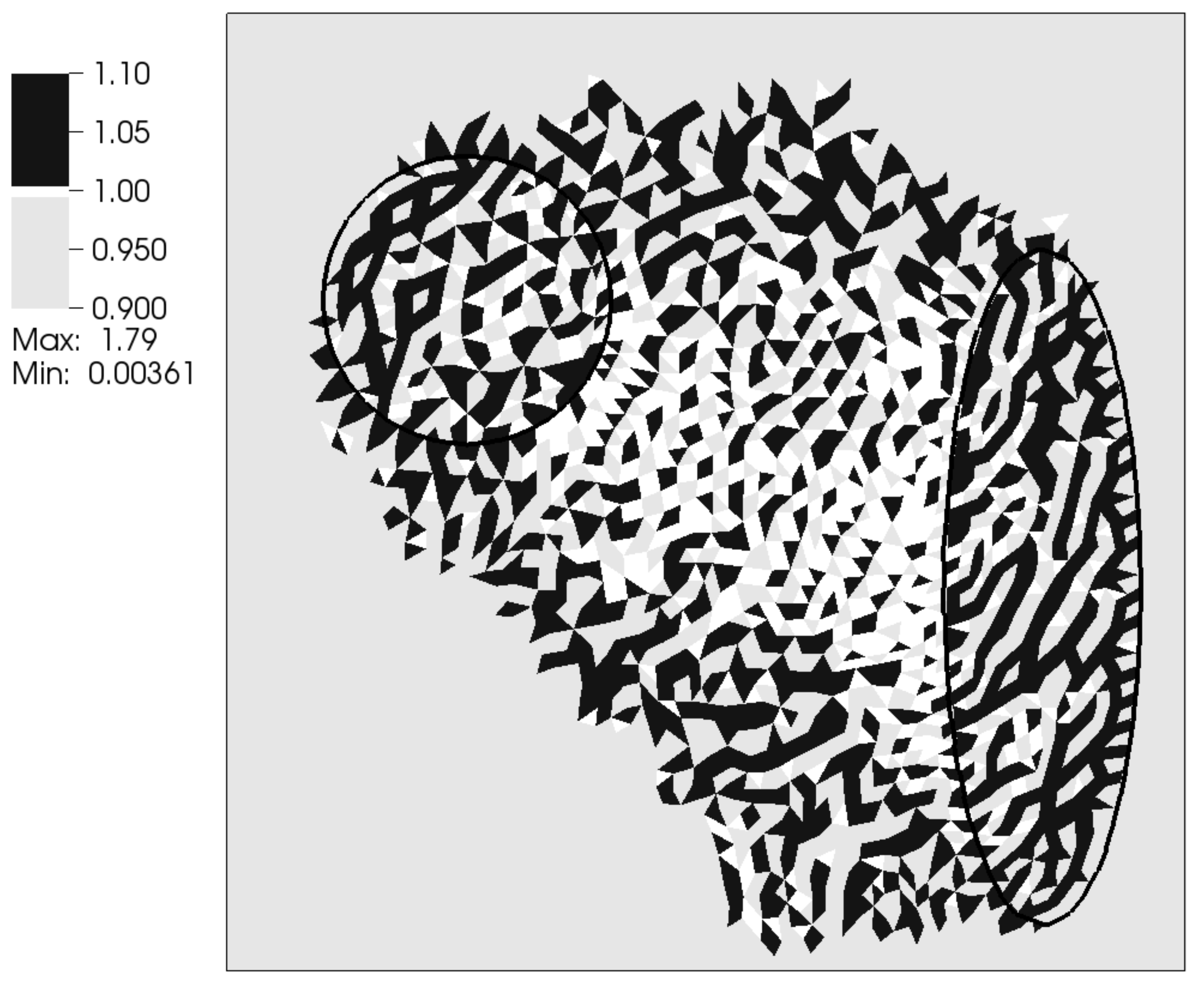}}
  }
  \centerline{
    {\includegraphics[width=0.3\textwidth]{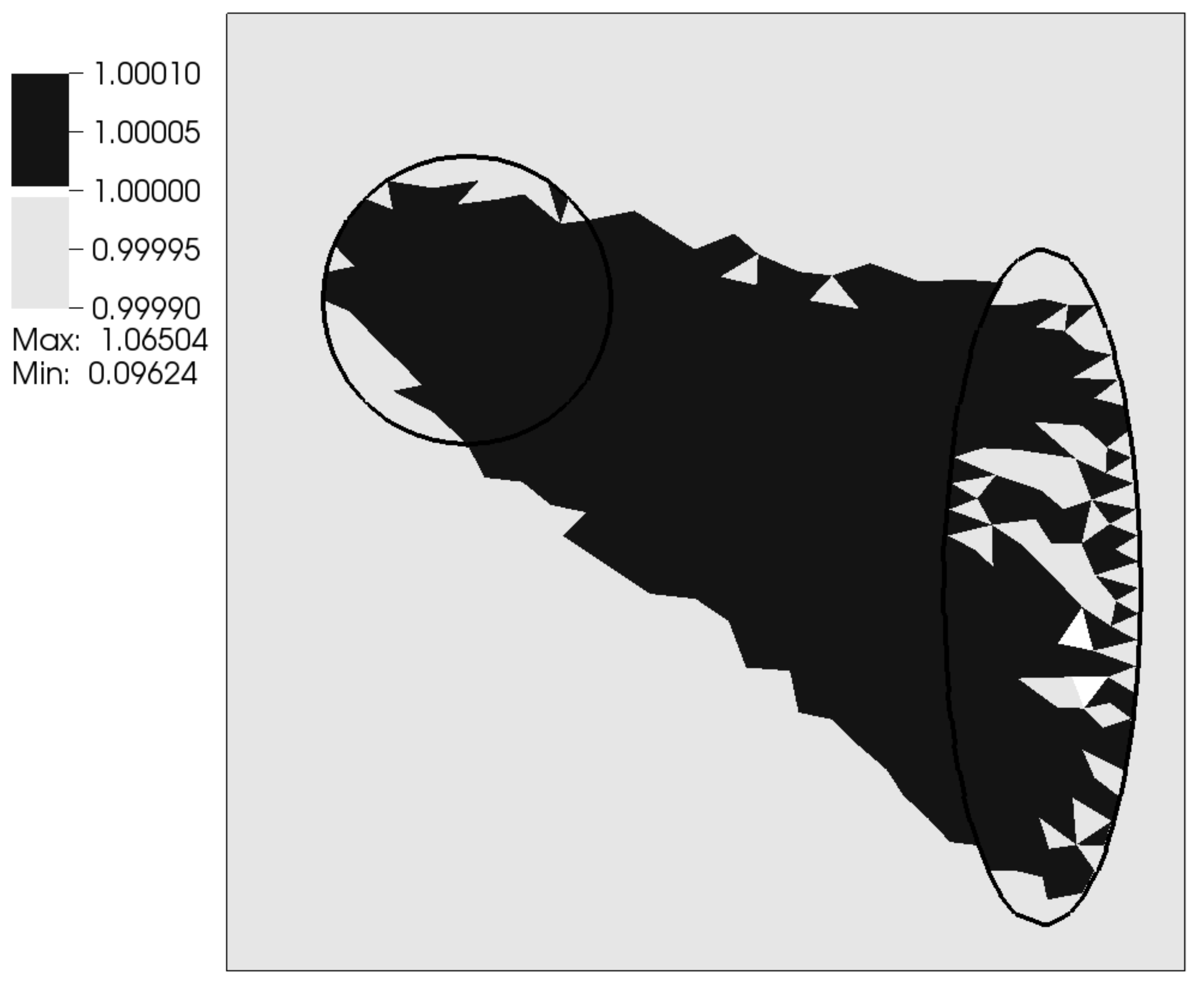}}
    \quad
    {\includegraphics[width=0.3\textwidth]{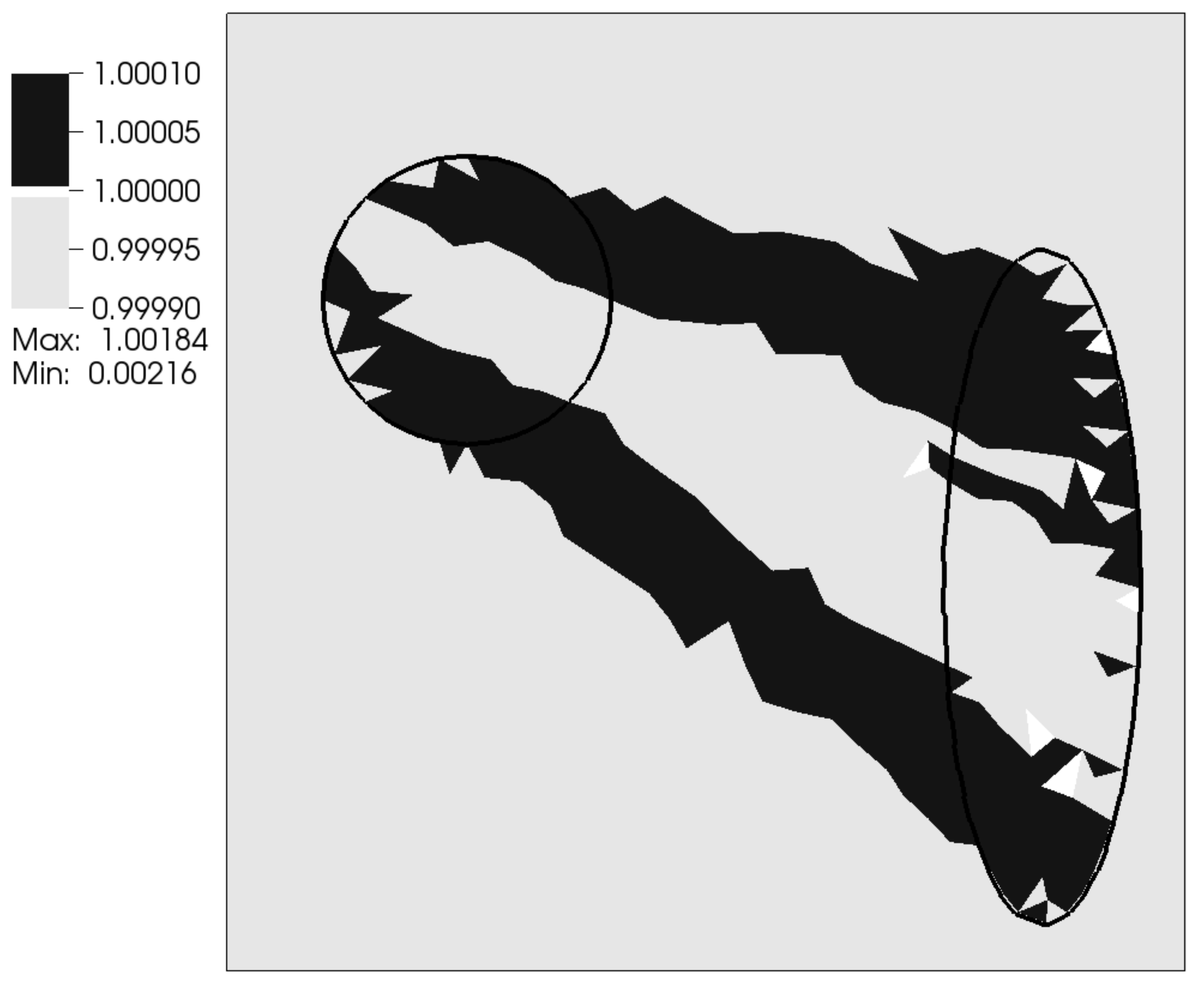}}
    \quad
    {\includegraphics[width=0.3\textwidth]{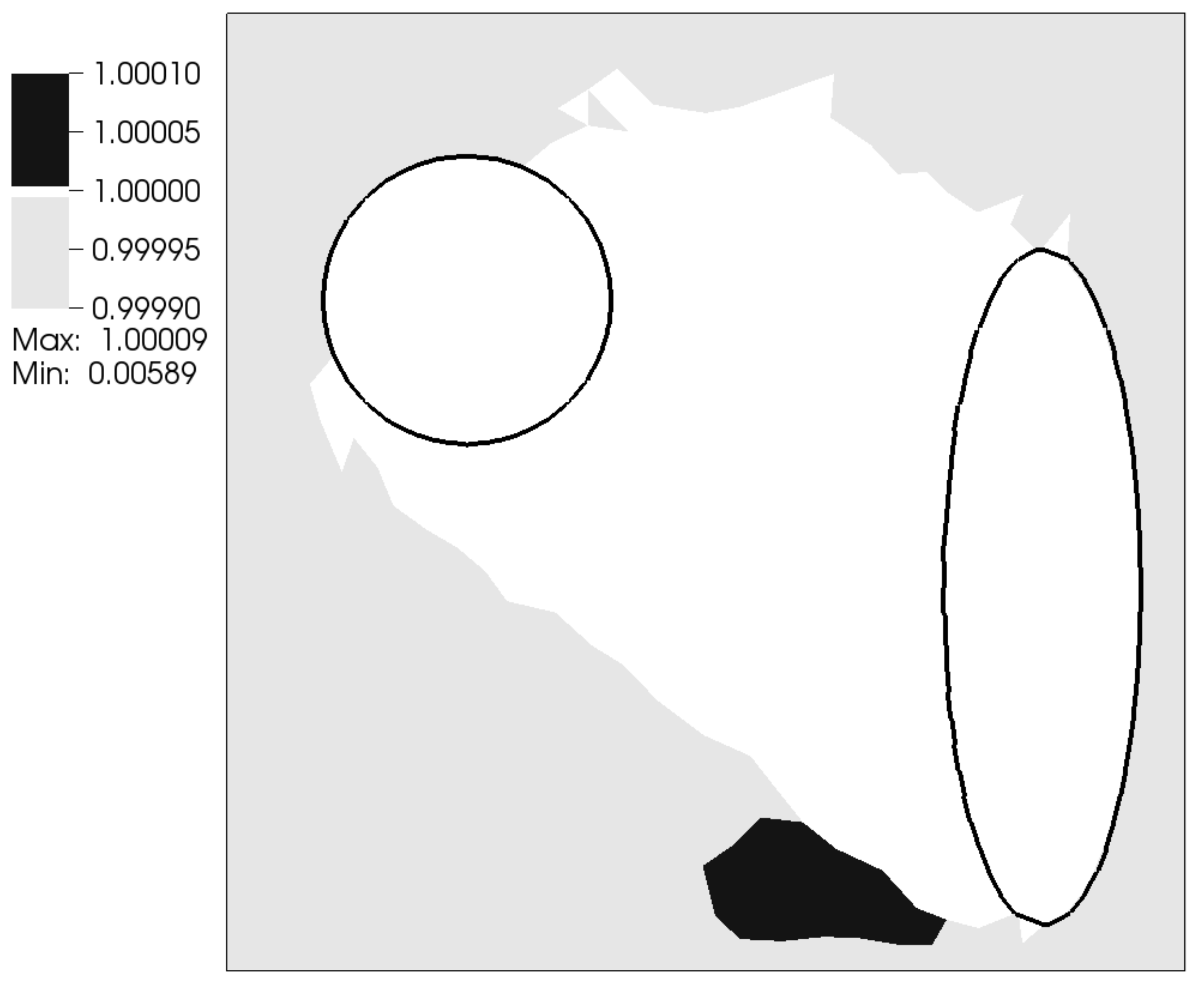}}
  }
  \caption{ Solution of Test~Case~2 at $t_1 = 6.75 \times 10^{1}$
    (left), $t_2 = 2.04 \times 10^{2}$ (center), and
    $t_3=1.54 \times 10^{3} $ (right) using the
    $\PC{1,\MeshPar/2}-\PC{0,\MeshPar}$ approach.  Top row: spatial
    distribution of $\TdensH\in\PC{0,\MeshPar}$. Middle row: spatial
    distribution of $|\Grad\PotH|\in\PC{0,\MeshPar/2}$ calculated from
    $\PotH\in\PC{1,\MeshPar/2}$. Bottom row: spatial distribution of
    $|\Grad\PotH|\in\PC{0,\MeshPar}$.}
  \label{fig:prig_p0_sub}
\end{figure}

On the other hand, oscillations completely disappear if we employ a
two-mesh approach. Looking at the checkerboard oscillations
displayed in \cref{fig:prig_p0}, it is intuitive to think that
averaging the gradient magnitude between neighboring triangles
should compensate the fluctuations. This observation led us to 
employ the $\PC{1,\MeshPar/2}-\PC{0,\MeshPar}$ discretization
described in \cref{sec:discr}. Indeed, with this approach the
gradients calculated from $\PotH\in\PONE(\Triang[\MeshPar/2])$ are
projected onto the space $\PZERO(\Triang[\MeshPar])$ for insertion
into the dynamic equation~\cref{eq:fem_ode}. This projection is
equivalent to averaging the piecewise constant gradients over the four
triangles of $\Triang[\MeshPar/2]$ that form one triangle of
$\Triang[\MeshPar]$. 
This results in a oscillation free $\TdensH$ field, as shown in
\cref{fig:prig_p0_sub}. It is evident that 
no $\TdensH$ oscillations form even at the coarsest mesh level used in
this test. Note that the spatial discretization of the elliptic
equation does not guarantee monotonicity~\citep{PuttiCordes98}. In
fact, the gradient magnitudes arising from $\PotH\in\PC{1,\MeshPar/2}$
still show the classical checkerboard fluctuations
(\cref{fig:prig_p0_sub}, middle row). However, the projection of
$|\Grad\PotH|$ onto $\PC{0,\MeshPar}$ (\cref{fig:prig_p0_sub},
bottom row) does not show oscillations, albeit small overshooting
occurs especially at the earlier times. We should emphasize 
that $|\Grad\PotH|$ is plotted here using an extremely narrow color scale
ranging within $[0.9999,1.0001]$.  

Looking at the final time-converged solution, the value
$|\Grad\OptPotH|$ within the support of $\OptTdensH$ and neighboring
regions is exactly unitary, and remains bounded by 1 almost
everywhere, in compliance with the constraint of the MK
equations. Only one small region with $|\Grad\OptPotH|>1$ develop with
a maximum value approaching 1.00009, considered consistent with the
tolerance used in the PCG linear solve. Indeed, oscillations of the
order of $10^{-5}$ in the gradient magnitude may be indistinguishable
by the linear solver of the $\PotH$ equation.  Similar considerations
can be done in the case $\TdensH\in\PC{1,\MeshPar}$ (not shown here)
but in this case some oscillations in $|\Grad\PotH|$ persist even when
$\PotH\in\PC{1,\MeshPar/2}$.  This reinforces the conjecture that some
sort of inf-sup stability condition exists that couples the
discretization spaces for $\PotH$ and $\TdensH$, and will be the
subject of further studies.

One final observation for this test case concerns the computational
cost of our approach. In comparison with the technique proposed
by~\citet{prigozhin}, our method seems to be computationally
advantageous. In fact, as already mentioned, the simulations reported
in~\citet{prigozhin} where obtained using a mixed FEM approach in
combination with adaptive mesh refinement, leading to nonlinear
systems of dimension approaching 60000. In our case, the dimensions
for the smallest test case are 1531 (number of triangles in
$\Triang[\MeshPar]$) and 3170 (number of nodes in
$\Triang[\MeshPar/2]$) for the diagonal dynamic algebraic system and
the elliptic system, respectively, leading to a total of 4701 degrees
of freedom.  Note that the finest solution of \cref{fig:prig-domain}
was obtained with a total of 73917 degrees of freedom.  Our confidence
that the approach we propose is superior to that~\citet{prigozhin} is
is reinforced by the observation that effective simulations can be
obtained at intermediate mesh levels. Moreover, time-convergence can
be considered achieved at much earlier times then the ones employed in
this work if we look at the stationarity of the \LCF.  Obviously,
adding simple adaptive mesh refinement strategies would greatly
enhance the performance of the studied methodology.

\subsection{Test case~3:  $\Lspace{1}$-Optimal Transport map}

In this section, we present a further experiment where the numerical
solution $(\OptTdensH,\OptPotH)$ from the \DMK\ approach is used to
compute approximate $\Lspace{1}$-OT Maps following the algorithm
suggested in~\citet{EvansGangbo99}.  Finally, these results are
compared with approximate maps obtained by linear programming or the
Sinkhorn algorithm with entropic
regularization~\citep{Perrot-et-al:2016}.

This experiment is shaped after~\citet{Li-et-al:2018}, and considers a
square domain $\Domain=[0,1]\times[0,1]$ with a source term $\Source$
supported in the ball of radius $0.35$ centered and
$(x_c,y_c)=(0.5,0.5)$ given by
$\Source(x,y)=max(0,1-\sqrt{(x-x_c)^2+(y-y_c)^2})$.  The sink term
$\Sink$ is the sum of four functions of the same form of $\Source$,
but located near the four corners of the domain. All the terms are
balanced to ensure zero mean of $\Fsource$.  The problem setting
together with the triangulation used in the \DMK\ solution is shown
in~\cref{fig:maps}.  Note that in this test case the OT map
will necessarily split $\Source$ into four different subsets that are
reallocated towards the four disjoint sinks $\Sink$. The resulting
singular distribution poses non trivial issues on the numerical
solution of the problem.

\paragraph{Calculation of the OT map via \DMK}

Given two Lipschitz continuous forcings $\Source$ and $\Sink$ with
disjoint supports, the OT map
$\OptMap:\Supp(\Source) \mapsto \Supp(\Sink)$ can be defined as
$\OptMap(x):=z(1,x)$, where $z(t,x)$ is the solution of the following
Cauchy Problem~\citep{EvansGangbo99}:
\begin{equation}
  \label{eq:optimal-field}
  \left\{
    \begin{aligned}
      z'(t)&=\Vectorfield(t,z(t))\\
      z(0)&=x \in \SuppPlus
    \end{aligned} 
  \right.
  \quad
  \Vectorfield(t,z)
  =
  \frac{-\OptTdens\Grad\OptPot}{(1-t)\Source(z)+t\Sink(z)}.
\end{equation}
Thus, we first compute $(\OptTdensH,\OptPotH)$ via the
$\PC{1,\MeshPar/2}-\PC{0,\MeshPar}$ approach combined, for simplicity,
with Explicit Euler time-stepping. We adopt the same parameters (time
step, linear solver tolerance, stop criteria, etc.)  used in the other
experimental tests in this paper. Then, we construct the approximate
OT map by evaluating the streamlines of the vector field
$\Vectorfield$ in~\cref{eq:optimal-field} emanating from the
barycenters of the triangles discretizing the support of $\Source$.
In order to avoid division by zero in~\cref{eq:optimal-field}, we
replace the term $(1-t)\Source(z)+t\Sink(z)$ with
$min[(1-t)\Source(z)+t\Sink(z)],10^{-5}]$.  Time-integration is
performed with a 4-th order Runge-Kutta method.  We denote with
$\OptMapH(\DMK)$ this approximate OT map.

\paragraph{Calculation of OT map via barycentric map}

The second method considered follows the approach detailed
in~\citet{Perrot-et-al:2016}, where an approximate OT map is built from
an approximate OT plan.  First, $\Source$ and $\Sink$ are discretized
with two atomic measures $\SourceH$ and $\SinkH$:
\begin{equation}
  \label{eq:atomic}
  \SourceH
  =
  \sum_{i=1}^{\DimSource} 
  \Vect[i]{\DiscreteSource}\Dirac(\bar{x}_i)
  \quad
  \SinkH
  =
  \sum_{j=1}^{\DimSink} 
  \Vect[j]{\DiscreteSink}  \Dirac(\bar{y}_j),
\end{equation}
where $(\bar{x}_{i})_{i=1,\ldots,\DimSource}$ and
$(\bar{y}_{j})_{i=1,\ldots,\DimSink}$ are sampling points in the
support of $\Source$ and $\Sink$.  We then denote with
$\Matr{\OptPlan}$ the solution of the Linear Programming problem
solving the classical $\Lspace{1}$-\OTP\ given $\SourceH,\SinkH$.  The
associated \emph{barycentric map} $\OptMap[\OptPlan]$ is defined as:
\begin{equation}
  \label{eq:bari-map}
  \OptMap[\Matr{\OptPlan}](\bar{x}_i)
  :=
  \Argmin_{y\in \Domain}
  \sum_{j=1}^{\DimSink}\Matr[i,j]{\OptPlan}\ |y-\bar{y}_{j}|
  \quad
  \forall i=1,\ldots,\DimSource.
\end{equation}
In order to build the approximate OT map that can then be compared
with $\OptMap(\DMK)$, the sampling points
$(\bar{x}_{i})_{i=1,\ldots,\DimSource}$ and
$(\bar{y}_{j})_{i=1,\ldots,\DimSink}$ are taken to be the barycenters
of the triangles discretizing the support of $\Source$ and $\Sink$,
respectively. The coefficients
$\Vect{\DiscreteSource}\in\REAL^{\DimSource}$ and
$\Vect{\DiscreteSink}\in\REAL^{\DimSink}$ in~\cref{eq:atomic} are then
computed as:
\begin{equation*}
  \Vect[i]{\DiscreteSource}=\int_{\Cell[i]} \Source \dx
  \quad \forall \Cell[i]\in \Supp(\Source)
  \qquad
  \Vect[j]{\DiscreteSink}=\int_{\Cell[j]} \Sink \dx
  \quad\forall \Cell[j]\in \Supp(\Sink).
\end{equation*}
We use two algorithms contained in the POT
toolbox~\citep{flamary2017pot} to find the OT Plan $\Matr{\OptPlan}$
for the discrete \OTP.  The first algorithm is based on a classical LP
solver and we denote with $\Matr{\OptPlanH}$, this approximated OT
plan.  The second algorithm is based on the the Sinkhorn
regularization of discrete $\Lspace{1}$-\OTP described
in~\citet{Cuturi:2013}.  We denote with $\Matr{\SnkhOptPlanH}$ its
approximate solution, where $\SnkhPar$ indicates the Sinkhorn
relaxation parameter with value $\SnkhPar=8e-4$, which was
experimentally evaluated to avoid algorithm failure. The barycentric
maps of the two plan are given by:
\begin{equation*}
  \OptMap(LP):=\OptMap[\Matr{\OptPlanH}]
  \quad
  \OptMap(S):=\OptMap[\Matr{\SnkhOptPlanH}].
\end{equation*}
The computation of $\OptMap(LP)$ and $\OptMap(S)$ requires the
solution of~\cref{eq:bari-map}, i.e., a weighted version of the
Fermat-Weber location problem, solved in our case with the algorithm
described in~\citet{Vardi2001}.

\begin{figure}
  \centerline{
    {\includegraphics[width=0.33\textwidth]{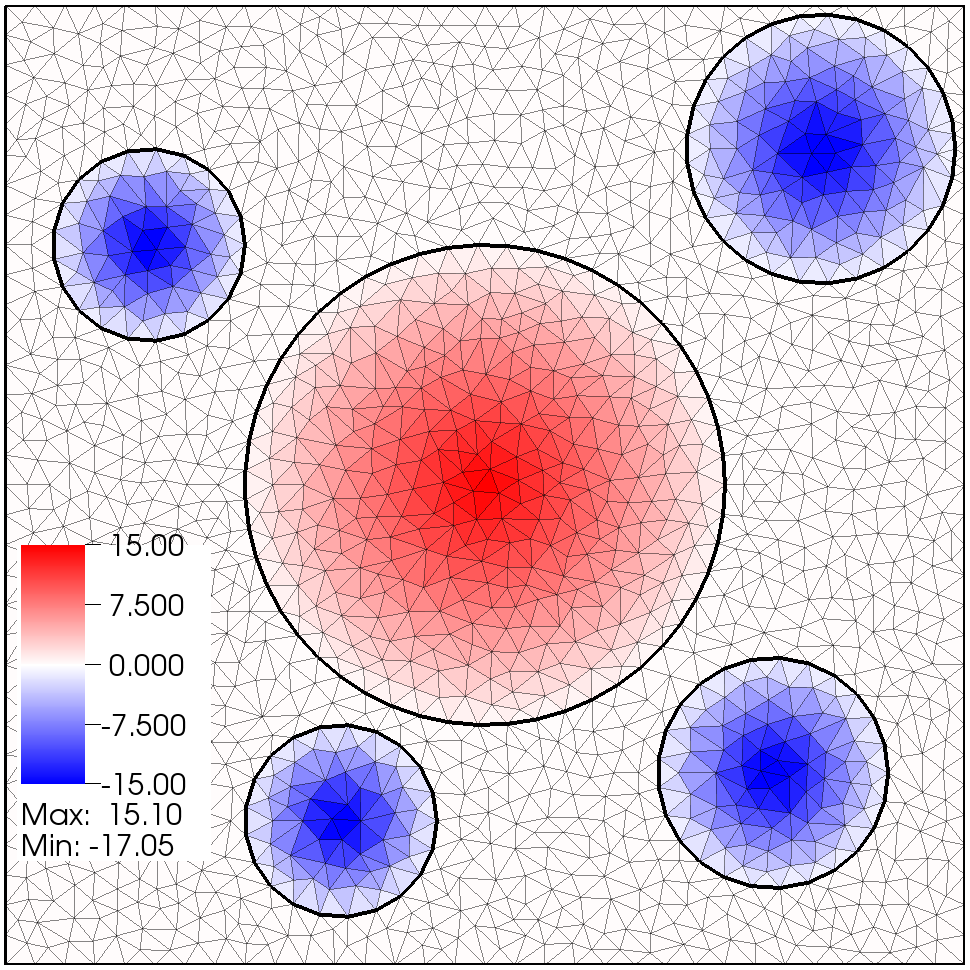}}
    {\includegraphics[width=0.33\textwidth]{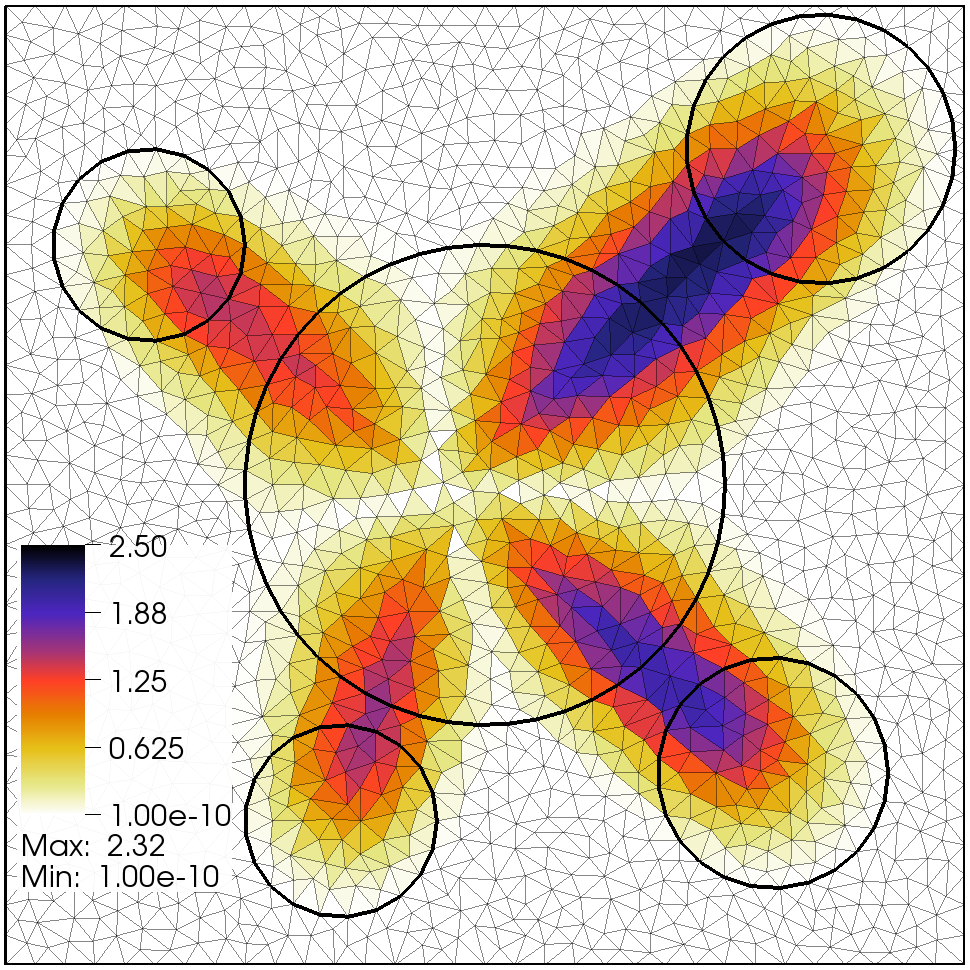}}
    {\includegraphics[width=0.33\textwidth]{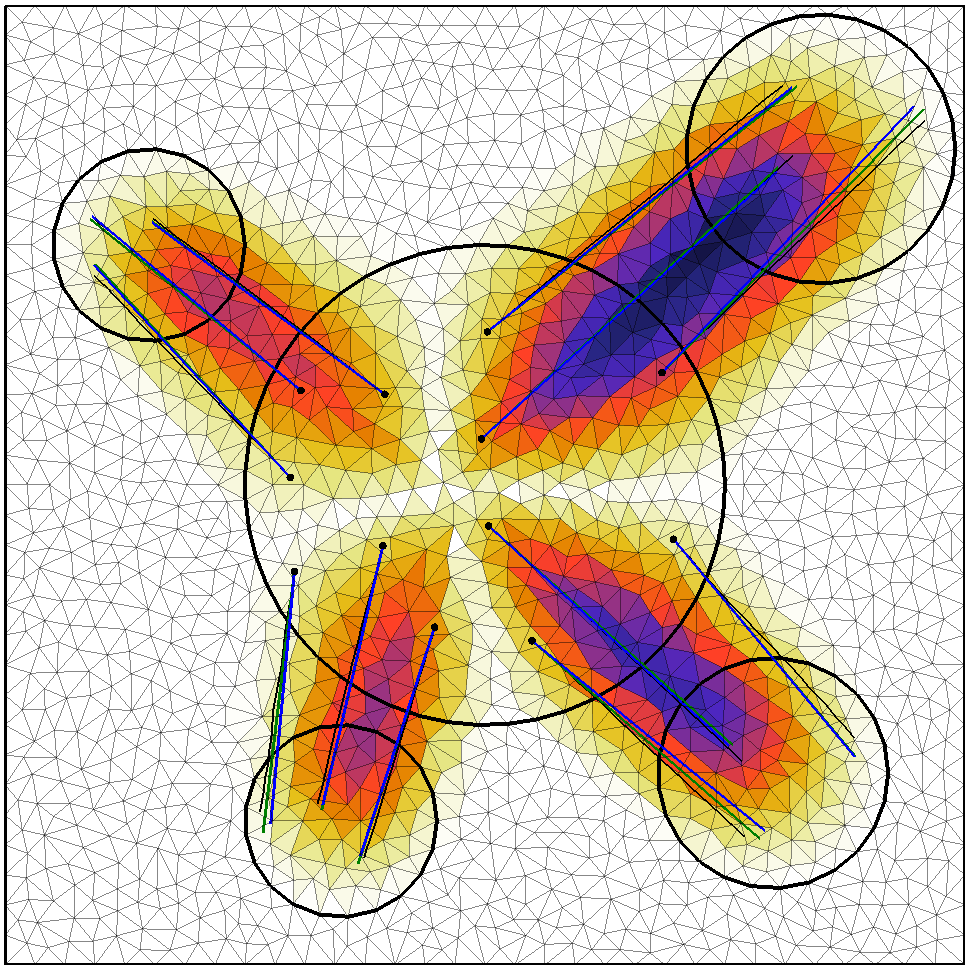}}
  }
   \centerline{
    {\includegraphics[width=0.33\textwidth]{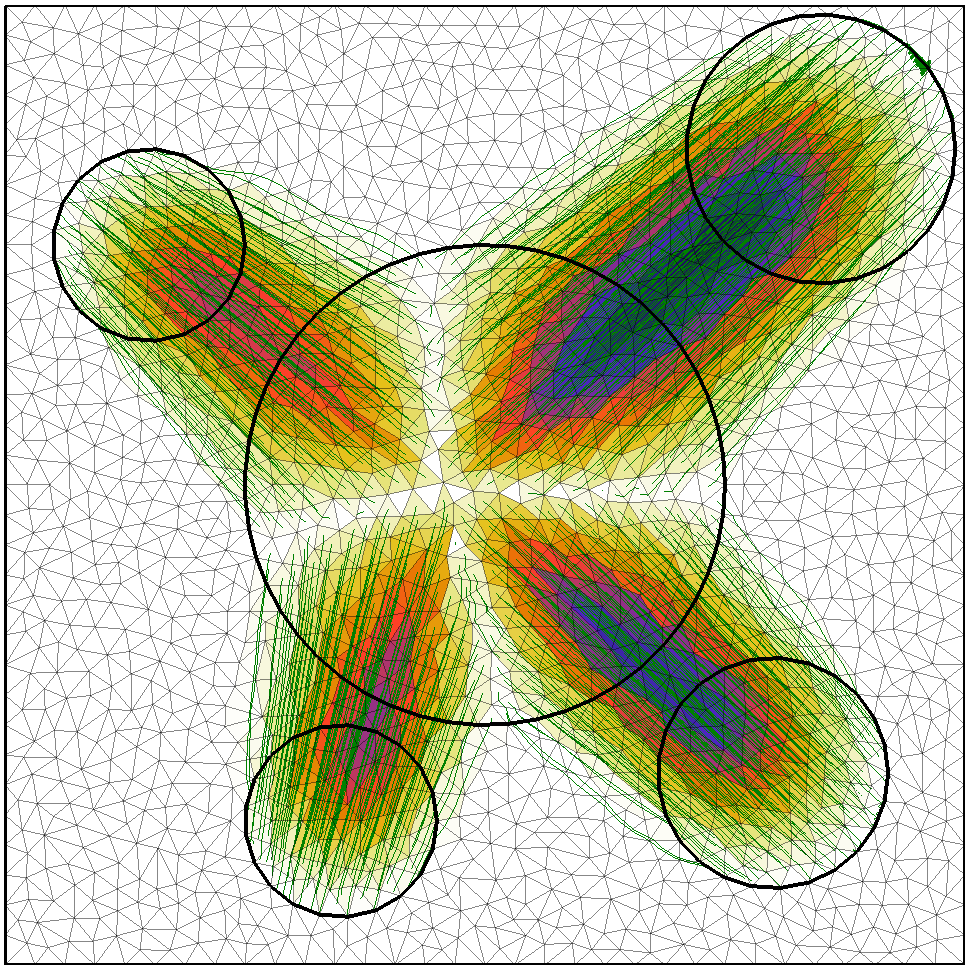}}
    {\includegraphics[width=0.33\textwidth]{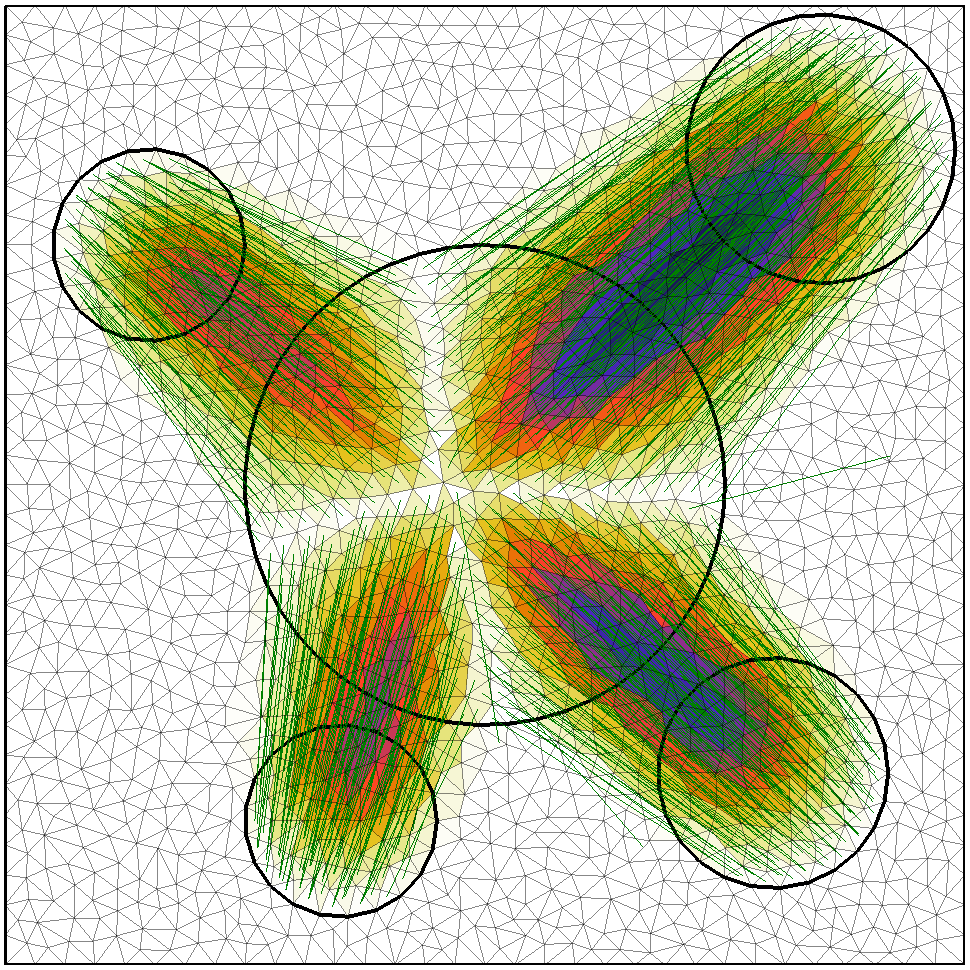}}
    {\includegraphics[width=0.33\textwidth]{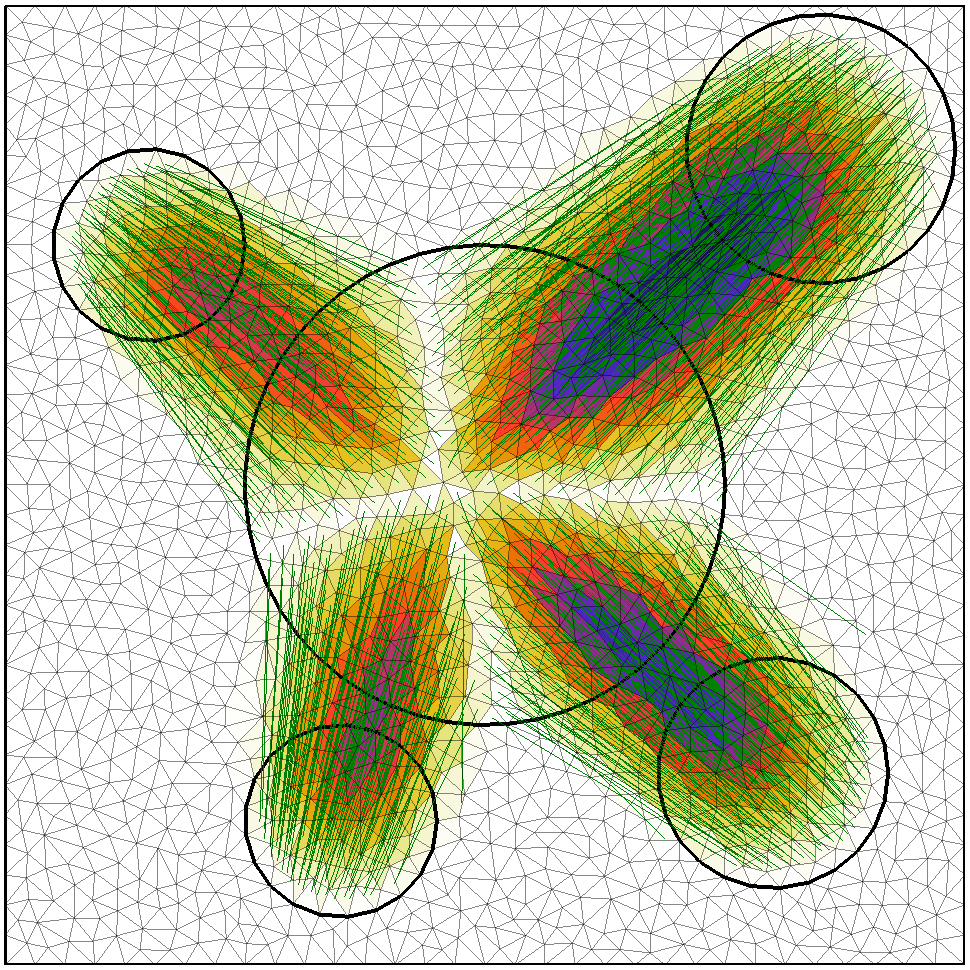}}
  }
  \caption{ Test case~3. The top row contains the problem definition
    with the triangulation $\Triang[\MeshPar]$ ($1699$ nodes and
    $3265$ triangles) used in the solution of \DMK, the source (red)
    and the sink(blue) terms (left); the approximate OT density
    $\OptTdensH$ distribution obtained via \DMK\ (middle) and the
    comparison between OT maps calculated with \DMK, LP, and Sinkhorn
    algorithms for few barycenters. The bottom row contains the full
    OT maps calculated starting from the triangle barycenters by the
    \DMK\ (left), the LP (middle), and the Sinkhorn (right)
    algorithms.}
  \label{fig:maps}
\end{figure}

The spatial distribution of $\OptTdensH$ as calculated by the \DMK\
approach is shown in the top central panel in~\cref{fig:maps}.  We
note the four regions into which the support of $\Source$ is divided.
Each region corresponds to the ``portion'' of $\Source$ that is sent
to each one of the four circles where $\Sink$ is supported. We call
$\Gamma$ the one-dimensional boundary dividing these regions.  In the
right top panel we compare the lines connecting 12 sampling point in
$\Supp(\Source)$ with their image through the maps $\OptMap(\DMK)$
(black), $\OptMap(LP)$ (green) and $\OptMap(S)$ (blue).  The three
approximated maps are qualitatively similar, suggesting that
$\OptTdensH$ can be effectively used to determine transport maps.  The
bottom panels of~\cref{fig:maps} show the full OT maps calculated for
triangle barycenters within $\Supp(\Source)$ with the three considered
methods.  We first note the almost prefect coincidence of the supports
of these lines with the support of $\OptTdensH$.  Almost all the
streamline computed for the map $\OptMapH(\DMK)$ are perfectly
straight, with the end-points on the lines covering $\Supp(\Sink)$.
The computation of the optimal destination of the barycenters close to
$\Gamma$ creates some numerical difficulties for all methods. Indeed,
the image of the $\OptMapH(LP)$ and $\OptMapH(S)$ with domain located
in barycenters of triangles located in $\Gamma$ fall outside
$\Supp(\Sink)$.  For $\OptMapH(\DMK)$, some of the streamlines
starting from these points are not exactly straight, while others
remain stack at the starting point.  Mesh refinement around $\Gamma$
should help to give a better characterization the partitions dividing
$\Supp(\Source)$.  In the \DMK\ model this mesh refinement strategy is
easily accomplished as triangles to be refined are easily identified
by the fact that $\OptTdensH$ tends to zero in the $\Supp(\Source)$.
Another advantage of the \DMK\ method is that, given a new sample
point $\bar{x}\in\Supp(\Source)$, we can compute $\OptMapH(\bar{x})$
without recomputing the pair $(\OptTdensH,\OptPotH)$. The
computational effort required is only the integration of the Cauchy
Problem~\cref{eq:optimal-field}.  The results of this test case show
that, for this case of $\Lspace{1}$-\OTP, our approach does not suffer
of the \emph{out-of samples problem} described
in~\citet{Perrot-et-al:2016}.

\section{Conclusions}

The performance of the proposed finite element method for the solution
of the dynamic Monge-Kantorovich equations has been thoroughly
analyzed experimentally on several test cases.  The results show that
the strategy for solving the \MKEQS\ by searching for the stationary
solution of the dynamic MK problem is highly promising.  These
experiments show that the resulting discrete system achieves optimal
convergence in space and time even when using simple successive
(Picard) linearization schemes.

The enhancement path for the proposed approach is clear. Some of the
issues currently under study include the development of a Newton
method for the solution of the nonlinear system in the case of
implicit Euler time-stepping, to completely exploit the geometric
convergence towards steady state only hinted at in the present paper.
Further improvements can be readily obtained by careful use of a
sequence of meshes with progressively finer resolution as time
increases, with adaptation to the support of the transport density
easily achievable.  The iterative nature of \DMK\ approach allows the
tight control of these computational savings both in the spatial and
in the time discretizations.

Theoretical work is needed to ascertain the formal convergence of the
proposed methods and to determine the exact relationships between the
spatial discretization spaces used for $\PotH$ and $\TdensH$ that
guarantee stability of the approach.  Future studies include the
handling of less regular forcing functions to address the more
interesting problems that can be studied by means of Optimal Transport
theory.  We believe that this work can be a useful starting
point to further developments of time-dependent transport systems.

\section*{Acknowledgments}
This work was partially funded by the the UniPD-SID-2016 project
“Approximation and discretization of PDEs on Manifolds for
Environmental Modeling” and by the EU-H2020 project
``GEOEssential-Essential Variables workflows for resource efficiency
and environmental management'', project of ``The European Network for
Observing our Changing Planet (ERA-PLANET)'', GA 689443.

\bibliographystyle{abbrvnat}
\bibliography{Strings,biblio}

\end{document}